\documentclass[a4paper]{amsart}

\usepackage[T1]{fontenc}
\usepackage[utf8]{inputenc}
\usepackage[english]{babel}
\usepackage{hyperref}
\usepackage{enumitem}
    \setlist[enumerate]{label = \upshape(\roman*)\leavevmode}
\usepackage{graphicx}
\usepackage{array}
\usepackage{todonotes}
\usepackage{xcolor}
    \definecolor{azure}{RGB}{0, 127, 255}
    \definecolor{green}{RGB}{0, 100, 0}
    \definecolor{blue}{RGB}{0, 0, 225}    \definecolor{orange}{RGB}{204, 85, 0}
\usepackage[backend=biber,sorting=nyt,giveninits=true,maxbibnames=9]{biblatex}


\usepackage{xpatch}

\makeatletter
\patchcmd\blx@bblinput{\blx@blxinit}
                      {\blx@blxinit
                      }{}{\fail}
\makeatother

\definecolor{darkred}{RGB}{139,0,0}
\definecolor{darkgreen}{RGB}{0,100,0}
\definecolor{darkmagenta}{RGB}{180,0,180}
\definecolor{darkblue}{RGB}{0,0,190}
\definecolor{darkorange}{RGB}{180,60,0}
\newcommand{\bsz}{{\bm{z}}}
\newcommand{\bsell}{{\bm{\ell}}}
\newcommand{\bsgamma}{{\bm{\gamma}}}
\newcommand{\setu}{{\mathfrak{u}}}
\newcommand{\calB}{{\mathcal{B}}}

\usepackage{mathtools, amssymb, amsthm}
\usepackage{cleveref, thmtools}  \usepackage{bm, dsfont} \numberwithin{equation}{section}

\newtheorem{theorem}{Theorem}[section]
\newtheorem{corollary}[theorem]{Corollary}
\newtheorem{lemma}[theorem]{Lemma}
\newtheorem{definition}[theorem]{Definition}
\newtheorem{remark}[theorem]{Remark}

\renewcommand{\hat}{\widehat}

\DeclareMathOperator{\Diag}{diag}
\DeclareMathOperator{\Span}{span}
\DeclareMathOperator{\Supp}{supp}
\DeclareMathOperator*{\Argmin}{arg\,min}
\newcommand{\ewc}{e^{\mathrm{wor-app}}}
\newcommand{\modone}{\bmod{}1}

\makeatletter
\newif\ifGPblacktext
\GPblacktexttrue
\providecommand{\colorrgb}[1]{}
\providecommand{\colorgray}[1]{}
\makeatother

\makeatletter\begin{document}

\title[Minimal Subsampled Rank-1 Lattices]{Minimal Subsampled Rank-1 Lattices for Multivariate Approximation with Optimal Convergence Rate}

\author{Felix Bartel}
\address{Mathematisch-Geographische Fakultät, KU Eichstätt-Ingolstadt, 85270 Eichstätt, Germany.}
\email{felix.bartel@ku.de}

\author{Alexander~D.~Gilbert}
\address{School of Mathematics and Statistics, University of New South Wales, Sydney NSW 2052, Australia.}
\email{alexander.gilbert@unsw.edu.au}

\author{Frances~Y.~Kuo}
\address{School of Mathematics and Statistics, University of New South Wales, Sydney NSW 2052, Australia.}
\email{f.kuo@unsw.edu.au}

\author{Ian~H.~Sloan}
\address{School of Mathematics and Statistics, University of New South Wales, Sydney NSW 2052, Australia.}
\email{i.sloan@unsw.edu.au}

\subjclass[2020]{
    41A25, 94A20  }

\begin{abstract} In this paper we show error bounds for randomly subsampled rank-1 lattices.
    We pay particular attention to the ratio of the size of the subset to the size of the initial lattice, which is decisive for the computational complexity.
    In the special case of Korobov spaces, we achieve the \emph{optimal polynomial sampling complexity} whilst having the \emph{smallest initial lattice possible}.
    We further characterize the frequency index set for which a given lattice is reconstructing by using the reciprocal of the worst-case error achieved using the lattice in question.
    This connects existing approaches used in proving error bounds for lattices.
    We make detailed comments on the implementation and test different algorithms using the subsampled lattice in numerical experiments.

\end{abstract} 

\maketitle

\section{Introduction} 

This paper mainly deals with the question of approximating functions using a subsample of a rank-1 lattice.
Rank-1 lattices have their origin as a Quasi-Monte Carlo (QMC) method and have been thoroughly studied in various settings ever since, see e.g., \cite{Sloan94, kaemmererdiss, KaPoVo13, PlPoStTa18, CKNS20, DKP22}.
The lattice point sets are given by $\bm X = \{\tfrac kn\bm z\modone\}_{k\in\{0,\dots,n-1\}}$, where $\bm z\in\mathds Z^d$ is known as the generating vector and it controls the quality of the point set, and $\bmod\,1$ denotes a component-wise fractional part of a vector.
The study of partial or subsampled rank-1 lattices has only begun recently, cf.\ \cite{BKPU22}.
Such point sets have the form
\begin{equation*}
    \bm X_J = \Big\{\tfrac kn\bm z \modone \Big\}_{k\in J}
    \quad\text{with}\quad
    J\subseteq\{0,\dots,n-1\} \,.
\end{equation*}
An example is depicted in \Cref{fig:scheme}.
The structure of lattice point sets allows for the application of fast Fourier methods when approximating functions whilst the subsampling improves the approximation properties, cf.\ \cite{BSU23}.
We investigate the worst-case setting, where the point set and algorithm are fixed and are used to approximate every function from a given function class $H$.
The \emph{worst-case error} for a general algorithm $A$ measured in $L_2$-norm is defined by
\begin{equation*}
    \ewc(A)
    \coloneqq \sup_{\|f\|_H \le 1} \|f-A f\|_{L_2} \,,
\end{equation*}
where $\|\cdot\|_H$ denotes the norm in $H$.
\begin{figure} \centering
    \begingroup
  \makeatletter
  \providecommand\color[2][]{\GenericError{(gnuplot) \space\space\space\@spaces}{Package color not loaded in conjunction with
      terminal option `colourtext'}{See the gnuplot documentation for explanation.}{Either use 'blacktext' in gnuplot or load the package
      color.sty in LaTeX.}\renewcommand\color[2][]{}}\providecommand\includegraphics[2][]{\GenericError{(gnuplot) \space\space\space\@spaces}{Package graphicx or graphics not loaded}{See the gnuplot documentation for explanation.}{The gnuplot epslatex terminal needs graphicx.sty or graphics.sty.}\renewcommand\includegraphics[2][]{}}\providecommand\rotatebox[2]{#2}\@ifundefined{ifGPcolor}{\newif\ifGPcolor
    \GPcolortrue
  }{}\@ifundefined{ifGPblacktext}{\newif\ifGPblacktext
    \GPblacktexttrue
  }{}\let\gplgaddtomacro\g@addto@macro
\gdef\gplbacktext{}\gdef\gplfronttext{}\makeatother
  \ifGPblacktext
\def\colorrgb#1{}\def\colorgray#1{}\else
\ifGPcolor
      \def\colorrgb#1{\color[rgb]{#1}}\def\colorgray#1{\color[gray]{#1}}\expandafter\def\csname LTw\endcsname{\color{white}}\expandafter\def\csname LTb\endcsname{\color{black}}\expandafter\def\csname LTa\endcsname{\color{black}}\expandafter\def\csname LT0\endcsname{\color[rgb]{1,0,0}}\expandafter\def\csname LT1\endcsname{\color[rgb]{0,1,0}}\expandafter\def\csname LT2\endcsname{\color[rgb]{0,0,1}}\expandafter\def\csname LT3\endcsname{\color[rgb]{1,0,1}}\expandafter\def\csname LT4\endcsname{\color[rgb]{0,1,1}}\expandafter\def\csname LT5\endcsname{\color[rgb]{1,1,0}}\expandafter\def\csname LT6\endcsname{\color[rgb]{0,0,0}}\expandafter\def\csname LT7\endcsname{\color[rgb]{1,0.3,0}}\expandafter\def\csname LT8\endcsname{\color[rgb]{0.5,0.5,0.5}}\else
\def\colorrgb#1{\color{black}}\def\colorgray#1{\color[gray]{#1}}\expandafter\def\csname LTw\endcsname{\color{white}}\expandafter\def\csname LTb\endcsname{\color{black}}\expandafter\def\csname LTa\endcsname{\color{black}}\expandafter\def\csname LT0\endcsname{\color{black}}\expandafter\def\csname LT1\endcsname{\color{black}}\expandafter\def\csname LT2\endcsname{\color{black}}\expandafter\def\csname LT3\endcsname{\color{black}}\expandafter\def\csname LT4\endcsname{\color{black}}\expandafter\def\csname LT5\endcsname{\color{black}}\expandafter\def\csname LT6\endcsname{\color{black}}\expandafter\def\csname LT7\endcsname{\color{black}}\expandafter\def\csname LT8\endcsname{\color{black}}\fi
  \fi
    \setlength{\unitlength}{0.0500bp}\ifx\gptboxheight\undefined \newlength{\gptboxheight}\newlength{\gptboxwidth}\newsavebox{\gptboxtext}\fi \setlength{\fboxrule}{0.5pt}\setlength{\fboxsep}{1pt}\definecolor{tbcol}{rgb}{1,1,1}\begin{picture}(5660.00,2260.00)\gplgaddtomacro\gplbacktext{\csname LTb\endcsname \put(1410,119){\makebox(0,0){\strut{}$\bm X$}}}\gplgaddtomacro\gplfronttext{}\gplgaddtomacro\gplbacktext{\csname LTb\endcsname \put(4230,119){\makebox(0,0){\strut{}$\bm X_J$}}}\gplgaddtomacro\gplfronttext{}\gplbacktext
    \put(0,0){\includegraphics[width={283.00bp},height={113.00bp}]{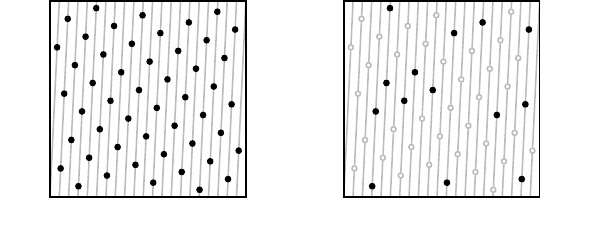}}\gplfronttext
  \end{picture}\endgroup
     \caption{Left: rank-1 lattice with generating vector $\bm z = (1,21)$ and lattice size $n = 55$. \\
    Right: subsampled rank-1 lattice with subsampling size $|J| = \lceil\sqrt n \, \log \sqrt{n}\,\rceil = 15$.}\label{fig:scheme}
\end{figure} 

For full lattices $\bm X = \bm X_{\{0,\dots,n-1\}}$ there are two main approaches to control the worst-case error:
\begin{enumerate}
\item
    Based on the function space $H$ and a lattice size $n$, one computes a generating vector $\bm z$ directly minimizing (a computable upper bound on) the worst-case error.
    It was shown in e.g., \cite{KSW06, CKNS20} that, for certain Korobov spaces with smoothness parameter $\alpha>1/2$, a component-by-component (CBC) construction achieves a decay rate
    \begin{equation}\label{eq:halfrate}
        \ewc(A_{\mathcal A}^{\bm X})
        \lesssim n^{-\alpha/2+\varepsilon} \,,
    \end{equation}
    where $\varepsilon>0$, $A_{\mathcal A}^{\bm X}$ is the ``classical lattice algorithm'' (see \eqref{eq:classical} below), and $A_n\lesssim B_n$ means there exists a constant $C>0$ independent of $n$ such that $A_n\le CB_n$ for all $n\in\mathds N$.
    When $\alpha$ is an integer it is the number of available square-integrable derivatives in each coordinate.
\item
    Another approach was taken in e.g., \cite{kaemmererdiss, KaPoVo13} where one first constructs a suitable frequency index set in the Fourier domain and then uses a CBC construction to end up with a ``reconstructing lattice'' (see precise definition in \eqref{eq:recon} below).
    This enables one to exactly reconstruct every trigonometric polynomial supported on the given frequencies.
    The ``aliasing'', i.e., the error arising from frequencies outside of the index set, is then controlled via an error bound.
    The resulting decay rate is the same as in \eqref{eq:halfrate}.
\end{enumerate}
In this paper we connect both approaches by finding a frequency index set for which a lattice is reconstructing in \Cref{recon} below.
These frequencies solely depend on the worst-case error achievable using any algorithm and the lattice in question.
In that way, lattices constructed according to the first approach can be investigated with respect to their reconstructing property, and tools from the second approach become available.

Regardless of the approach, there is a lower bound on the error rate using a full lattice of $n^{-\alpha/2}$, which is half the polynomial optimal rate of decay $n^{-\alpha}$ when there is no restriction for the point set used, cf.\ \cite{BKUV17}.
This seems to be an intrinsic problem with redundant information in the full lattice and motivates only using a subsampled lattice.
This was already investigated in the paper \cite{BKPU22}.

For unweighted Korobov spaces known as periodic Sobolev spaces with dominating mixed smoothness $\alpha>1/2$, a combination of random and deterministic subsampling was used in \cite[Corollary~1.2]{BKPU22}, yielding the bound
    \begin{equation}\label{eq:chemnitz}
        \ewc(S_{\mathcal B}^{\bm X_J})
        \lesssim |J|^{-\alpha} (\log|J|)^{(d-1)\alpha+\frac 12}
        \quad\text{with}\quad
        n \sim |J|^{\frac{2}{1-1/(2\alpha)}}
        \,,
    \end{equation}
    where $S_{\mathcal B}^{\bm X_J}$ is the ``least squares approximation'' to be defined precisely in \eqref{eq:lsqr} below, where $J$ labels the subsampled point set $\bm X_J$ and $A\sim B$ means $A\lesssim B$ and $B\lesssim A$ holds simultaneously.

The method in \cite{BKPU22} achieves the optimal polynomial order $|J|^{-\alpha+\varepsilon}$ up to $\varepsilon>0$ in terms of sampling complexity.
However, the computational complexity is determined by~$n$.
Since an algorithm using a subsampled lattice is inherently still using the lattice of size~$n$, the lower bound from \cite{BKUV17} still applies, and achieving the rate~$|J|^{-\alpha+\varepsilon}$ implies $n^{-\alpha/2} \lesssim |J|^{-\alpha+\varepsilon}$, yielding the lower bound
\begin{equation*}
    n \gtrsim |J|^{2-\frac{2\varepsilon}{\alpha}} \,.
\end{equation*}
With respect to the size of $|J|$, $n$ given in \eqref{eq:chemnitz} is larger still, being off by a polynomial order depending on the smoothness $\alpha$.

In this paper, in \Cref{koroboverror} and \Cref{cor} below, we show that we can go arbitrarily close to the squared number of points independently of the smoothness, i.e., we have
\begin{equation*}
    \ewc(S_{\mathcal B}^{\bm X_J})
    \lesssim |J|^{-\alpha+\varepsilon}
    \quad\text{with}\quad
    |J|^{2\sqrt{1-\varepsilon/\alpha}}
    \lesssim
    n
    \lesssim
    |J|^{2/\sqrt{1-\varepsilon/\alpha}}
\end{equation*}
for $0<\varepsilon<\alpha$.
This result is optimal in the sense that it achieves the best polynomial sampling complexity whilst having the smallest initial lattice size.

This paper is organized as follows.
We start by introducing the classical lattice algorithm and the kernel method in \Cref{sec:classical}.
Then we investigate the projection property of approximation algorithms in \Cref{sec:proj}, where we also prove a connection between the worst-case error and the reconstructing property of lattices in \Cref{recon}, which appears to be a new result.
In \Cref{sec:sub} we show general error bounds for the randomly subsampled lattice, which we apply to the Korobov space setting in \Cref{sec:korobov}.
After some comments on different implementations in \Cref{sec:implementation}, we present some numerical experiments in \Cref{sec:numerics} and give concluding remarks in \Cref{sec:conclusion}.

\vspace{5pt}
\paragraph{Notation.}
In this paper $\mathds T = \mathds R/\mathds N$ is the $1$-periodic torus;
$\langle\cdot,\cdot\rangle$ denotes the Euclidean inner product;
$\bm x\modone$ is understood as the component-wise fractional part of a vector $\bm x\in\mathds R^d$;
we write $a \equiv_n b$ if there exists $k\in\mathds N$ such that $a=b+kn$ and $a\not\equiv_n b$ if not;
$\|\cdot\|_{2}$ denotes the spectral norm;
vectors are understood as column vectors.
 \section{Classical lattice algorithm versus kernel method}\label{sec:classical}

A \emph{rank-1 lattice} point set $\bm X = \{\frac kn \bm z \modone \}_{k=0}^{n-1}$ is defined by a \emph{generating vector} $\bm z\in\mathds Z^d$ and the \emph{lattice size} $n\in\mathds N$.
Rank-1 lattices were first studied as a Quasi-Monte Carlo (QMC) method to compute integrals, see e.g., \cite{Sloan94}
\begin{equation*}
    \int_{\mathds T^d} f(\bm x) \;\mathrm d\bm x
    \approx \frac 1n \sum_{k=0}^{n-1} f(\tfrac kn \bm z \modone) \,,
\end{equation*}
which corresponds to approximating the Fourier coefficient $\hat f_{\bm 0}$ of a function with the Fourier series
\begin{equation*}
    f = \sum_{\bm h\in\mathds Z^d} \hat f_{\bm h} \exp(2\pi\mathrm i\langle\bm h, \cdot\rangle) \,,
    \quad\text{where}\quad
    \hat f_{\bm h} = \int_{\mathds T^d} f(\bm x) \exp(-2\pi\mathrm i\langle\bm h,\bm x\rangle) \;\mathrm d\bm x \,.
\end{equation*}
Doing this for multiple frequencies $\bm h$ from a prescribed frequency index set $\mathcal A\subseteq\mathds Z^d$, one may construct the trigonometric polynomial approximation to $f$ which we refer to as the \emph{classical lattice algorithm}
\begin{equation}\label{eq:classical}
    A_{\mathcal A}^{\bm X} f
    \coloneqq \sum_{\bm h\in\mathcal A}
    \Big( \underbrace{\frac 1n \sum_{k=0}^{n-1} f(\tfrac kn \bm z\modone) \exp(-2\pi\mathrm i\tfrac kn \langle\bm h,\bm z\rangle)}_{\eqqcolon\,\hat g_{\bm h}} \Big)
    \exp(2\pi\mathrm i\langle\bm h,\cdot\rangle) \,,
\end{equation}
see e.g., \cite{KSW06, KaPoVo13, kaemmererdiss, PlPoStTa18, CKNS20, DKP22}.
To get a feeling for this method, we have a look at the Fourier coefficients of the approximation $A_{\mathcal A}^{\bm X} f \eqqcolon g = \sum_{\bm h\in\mathcal A} \hat g_{\bm h} \exp(2\pi\mathrm i\langle\bm h,\cdot\rangle)$ in terms of the Fourier coefficients of $f$.
By the \emph{character property}
\begin{equation*}
    \frac 1n \sum_{k=0}^{n-1} \exp(2\pi\mathrm i m \tfrac kn) = \delta_{m\equiv_n 0}
    \coloneqq \begin{cases} 1 & \text{if } m\equiv_n 0, \\ 
    0 & \text{otherwise,} \end{cases}
\end{equation*}
we have for $\bm h\in\mathcal A$,
\begin{align}
    \hat g_{\bm h}
    &= \frac 1n \sum_{k=0}^{n-1} \Big( \sum_{\bm\ell\in\mathds Z^d} \hat f_{\bm\ell} \exp(2\pi\mathrm i\tfrac kn \langle\bm\ell,\bm z\rangle) \Big) \exp(-2\pi\mathrm i\tfrac kn\langle\bm h,\bm z\rangle) \nonumber \\
    &= \sum_{\bm\ell\in\mathds Z^d} \hat f_{\bm\ell} \, \frac 1n \sum_{k=0}^{n-1} \exp(2\pi\mathrm i\tfrac kn \langle\bm\ell-\bm h,\bm z\rangle) \nonumber \\
    &= \sum_{\substack{\bm\ell\in\mathds Z^d \\ \langle\bm\ell,\bm z\rangle\equiv_n\langle\bm h,\bm z\rangle}} \!\!\!\!\!\! \hat f_{\bm\ell} \label{eq:aliasing} \,.
\end{align}
Thus, the classical lattice approximation has an aliasing effect where multiple frequencies of the target function are mapped to a single frequency of the approximation.
In order to control this error we have to demand some prior smoothness assumptions on the function~$f$.
This is usually done by enforcing a decay in the Fourier coefficients as follows
\begin{equation}\label{eq:H}
    f\in H
    = \Big\{ f\colon\mathds T^d \to\mathds C : 
    \|f\|_H^2
    = \sum_{\bm h\in\mathds Z^d} r(\bm h) \, |\hat f_{\bm h}|^2
    < \infty \Big\} \,,
\end{equation}
where $r(\bm h)\to\infty$ for $\|\bm h\|_2 \to \infty$.
Specific choices of the function $r(\cdot)$ lead to e.g., the Korobov spaces in \Cref{sec:korobov}.
Obviously there are good and bad choices for the frequency index set $\mathcal A\subseteq\mathds Z^d$ and the generating vector $\bm z\in\mathds Z^d$.
The following theorem gives an error bound for the classical lattice algorithm and follows \cite[Section~2.4]{CKNS20}.

\begin{theorem}\label{classicalbound} Let $H$ be the function space defined in \eqref{eq:H}.
    Consider a lattice $\bm X = \{\tfrac kn\bm z\modone\}_{k=0}^{n-1} \subseteq\mathds T^d$ and a frequency index set
    $ \mathcal A = \{ \bm h\in\mathds Z^d : r(\bm h) \le M \} $ 
    for some radius $M>0$.
    Then for the classical lattice algorithm defined in \eqref{eq:classical}, we have
    \begin{equation}\label{eq:classicalboundupperbound}
        \ewc(A_{\mathcal A}^{\bm X})^2
        < \frac 1M + M \mathcal S_n(\bm z) \,,
    \end{equation}
    with
    \begin{equation}\label{eq:S}
        \mathcal S_n(\bm z)
        = \sum_{\bm h\in\mathds Z^d} \frac{1}{r(\bm h)} \sum_{\substack{\bm\ell\in\mathds Z^d\setminus\{\bm 0\}\\\langle\bm\ell,\bm z\rangle\equiv_n 0}} \frac{1}{r(\bm h+\bm\ell)} \,.
    \end{equation}
    In particular, with the optimal choice $M = 1/\sqrt{\mathcal S_n(\bm z)}$, which minimizes the right-hand side of \eqref{eq:classicalboundupperbound}, we have
    \begin{equation}\label{eq:classicalbound}
        \ewc(A_{\mathcal A}^{\bm X})^2
        <\frac 2M
        = 2 \sqrt{\mathcal S_n(\bm z)} \,.
    \end{equation}
\end{theorem}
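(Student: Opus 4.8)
The plan is to work entirely on the Fourier side and split the error into a \emph{truncation} part and an \emph{aliasing} part. First I would write $g = A_{\mathcal A}^{\bm X} f$, whose Fourier coefficients vanish for $\bm h\notin\mathcal A$ and, for $\bm h\in\mathcal A$, are given by the aliasing identity \eqref{eq:aliasing}. By Parseval's identity,
\[
    \|f - g\|_{L_2}^2
    = \sum_{\bm h\notin\mathcal A} |\hat f_{\bm h}|^2
    + \sum_{\bm h\in\mathcal A} |\hat f_{\bm h} - \hat g_{\bm h}|^2 ,
\]
and \eqref{eq:aliasing} turns the inner difference into $\hat f_{\bm h} - \hat g_{\bm h} = -\sum_{\bm\ell} \hat f_{\bm h+\bm\ell}$, where throughout $\bm\ell$ ranges over the nonzero dual vectors $\{\bm\ell\in\mathds Z^d\setminus\{\bm 0\} : \langle\bm\ell,\bm z\rangle\equiv_n 0\}$. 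This exhibits the two error sources explicitly.

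For the truncation part I would use that $\bm h\notin\mathcal A$ forces $r(\bm h)>M$, so that
\[
    \sum_{\bm h\notin\mathcal A} |\hat f_{\bm h}|^2
    < \frac 1M \sum_{\bm h\notin\mathcal A} r(\bm h)\,|\hat f_{\bm h}|^2
    \le \frac 1M \|f\|_H^2 \,,
\]
which is where the strict inequality in \eqref{eq:classicalboundupperbound} originates (for $f\ne 0$ at least one of the truncation or aliasing estimates is strict).

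The main work is the aliasing part, and this is the step I expect to be the obstacle, being a matter of careful bookkeeping. I would apply Cauchy--Schwarz to the inner sum with the splitting $\hat f_{\bm h+\bm\ell} = r(\bm h+\bm\ell)^{-1/2}\cdot r(\bm h+\bm\ell)^{1/2}\hat f_{\bm h+\bm\ell}$, giving $|\hat f_{\bm h}-\hat g_{\bm h}|^2 \le \big(\sum_{\bm\ell} r(\bm h+\bm\ell)^{-1}\big)\big(\sum_{\bm\ell} r(\bm h+\bm\ell)|\hat f_{\bm h+\bm\ell}|^2\big)$. I would then sum over $\bm h\in\mathcal A$ and interchange the order of summation, reindexing via $\bm k = \bm h+\bm\ell$, so that the constraint $\langle\bm\ell,\bm z\rangle\equiv_n 0$ becomes $\langle\bm k,\bm z\rangle\equiv_n\langle\bm h,\bm z\rangle$. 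The decisive point is that for every $\bm h\in\mathcal A$ one has $r(\bm h)\le M$, hence $1\le M/r(\bm h)$; inserting this factor and then dropping the restrictions $\bm h\in\mathcal A$, $\bm h\ne\bm k$ and the congruence bounds the coefficient multiplying each $r(\bm k)|\hat f_{\bm k}|^2$ by
\[
    M \sum_{\bm h\in\mathds Z^d} \frac{1}{r(\bm h)} \sum_{\bm\ell} \frac{1}{r(\bm h+\bm\ell)}
    = M\,\mathcal S_n(\bm z) \,.
\]
This yields the aliasing estimate $M\,\mathcal S_n(\bm z)\,\|f\|_H^2$, and adding the truncation estimate and using $\|f\|_H\le 1$ proves \eqref{eq:classicalboundupperbound}.

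Finally, for \eqref{eq:classicalbound} I would minimize the one-variable function $M\mapsto \tfrac 1M + M\,\mathcal S_n(\bm z)$ over $M>0$; setting the derivative $-M^{-2}+\mathcal S_n(\bm z)$ to zero gives the stationary point $M=1/\sqrt{\mathcal S_n(\bm z)}$, at which the bound equals $2\sqrt{\mathcal S_n(\bm z)} = 2/M$. The genuinely delicate part is thus only the interchange-and-reweight manoeuvre in the aliasing estimate, so that the doubly indexed sum closes up exactly into $\mathcal S_n(\bm z)$; the remainder is Parseval and elementary optimization.
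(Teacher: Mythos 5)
Your proposal reconstructs, correctly, the same argument that the paper relies on for \Cref{classicalbound} (the paper defers to \cite[Section~2.4]{CKNS20} rather than giving details): Parseval separates the truncation and aliasing contributions, Cauchy--Schwarz with the weights $r(\bm h+\bm\ell)^{\pm 1/2}$ handles the aliasing sum, and reindexing via $\bm k=\bm h+\bm\ell$ followed by enlarging the $\bm h$-sum closes everything up into $\mathcal S_n(\bm z)$; since all terms are nonnegative, the interchanges of summation are legitimate, and the optimization over $M$ is elementary. Up to the issue below, this proves the theorem.

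The gap is in your justification of the \emph{strict} inequalities. You argue that for each fixed $f\neq 0$ at least one of your estimates is strict. But $\ewc(A_{\mathcal A}^{\bm X})$ is a supremum over the unit ball of $H$, and a supremum of quantities that are each strictly below $\frac 1M+M\,\mathcal S_n(\bm z)$ can still equal $\frac 1M+M\,\mathcal S_n(\bm z)$: pointwise strictness does not pass to the supremum unless the slack is uniform over the unit ball, and your parenthetical provides no such uniformity. The repair is already contained in your own bookkeeping, and it is exactly what the paper's remark after \Cref{classicalbound} alludes to. In your reindexed sum the variable $\bm h$ always lies in $\mathcal A$ (and additionally satisfies $\bm h\neq\bm k$ and the congruence), so the coefficient of each $r(\bm k)\,|\hat f_{\bm k}|^2$ is in fact bounded by
\begin{equation*}
    M\sum_{\bm h\in\mathcal A}\frac{1}{r(\bm h)}\sum_{\substack{\bm\ell\in\mathds Z^d\setminus\{\bm 0\}\\ \langle\bm\ell,\bm z\rangle\equiv_n 0}}\frac{1}{r(\bm h+\bm\ell)}
    \eqqcolon M\,\Sigma_{\mathcal A}\,,
\end{equation*}
that is, you should drop only the conditions $\bm h\neq\bm k$ and $\langle\bm h,\bm z\rangle\equiv_n\langle\bm k,\bm z\rangle$ while keeping $\bm h\in\mathcal A$. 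Every term of $\mathcal S_n(\bm z)$ with $\bm h\notin\mathcal A$ is strictly positive (nonzero dual vectors exist, e.g.\ $\bm\ell=(n,0,\dots,0)$, and $r$ is positive and finite), and $\mathds Z^d\setminus\mathcal A$ is nonempty because $r(\bm h)\to\infty$; hence $\Sigma_{\mathcal A}<\mathcal S_n(\bm z)$, a gap independent of $f$. This gives the uniform bound $\ewc(A_{\mathcal A}^{\bm X})^2\le \frac 1M+M\,\Sigma_{\mathcal A}<\frac 1M+M\,\mathcal S_n(\bm z)$ (assuming, as is implicit, that $\mathcal S_n(\bm z)<\infty$), and substituting $M=1/\sqrt{\mathcal S_n(\bm z)}$ then yields \eqref{eq:classicalbound} in its strict form. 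With this one repair your proof is complete.
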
 

We have stated \eqref{eq:classicalboundupperbound} and \eqref{eq:classicalbound} as strict inequalities.
This can be seen from the derivation of the quantity $\mathcal S_n(\bm z)$, see e.g., \cite[Formula (2.7)]{CKNS20}, where infinitely many positive terms with index $\bm h\notin\mathcal A$ were added.

For weighted Korobov spaces (see \Cref{sec:korobov} below) and a given lattice size $n\in\mathds N$, component-by-component (CBC) constructions for generating vectors $\bm z$ are known, controlling $\mathcal S_n(\bm z)$ as in \Cref{Skorobov} below, which is \cite[Theorem~3.3]{KMN24}.
Fast CBC implementations are given in \cite{CKNS20, GS25}.

Next we comment on the kernel method, which is often used in practice and benefits from the lattice structure, cf.\ \cite[Section~5]{KKKNS21}.
For kernel methods we need to assume that $H$ is a reproducing kernel Hilbert space, i.e., function evaluations are continuous, which is a common and natural assumption.
By the Moore--Aronszajn theorem, $H$ has a corresponding reproducing kernel $K\colon\mathds T^d\times\mathds T^d\to\mathds C$, which is symmetric and positive definite.
The \emph{kernel approximation} of $f$ based on the lattice $\bm X$ is defined by
\begin{equation}\label{eq:kernelmethod}
    A_{\text{ker}}^{\bm X}f
    \coloneqq \sum_{k=0}^{n-1} a_k \, K(\cdot,\tfrac kn\bm z\modone)
    \quad\text{with}\quad
    \bm a = [a_k]_{k\in\{0,\dots, n-1\}} = \bm K^{-1}\bm f \,,
\end{equation}
where $\bm K = [K(\tfrac kn\bm z\modone, \tfrac{k'}{n}\bm z\modone)]_{k,k'\in \{0,\dots,n-1\}}$ is the kernel matrix, and the vector $\bm f = [f(\tfrac kn\bm z\modone)]_{k\in \{0,\dots,n-1\}}$ consists of the function evaluations on the lattice $\bm X$.
Since $K(\bm x,\bm y) = K(\bm x-\bm y,\bm 0)$, we have that $\bm K$ is a circulant matrix
\begin{align*}
    \bm K
    &= \Big[K(\tfrac kn\bm z\modone, \tfrac{k'}{n}\bm z\modone)\Big]_{k,k'\in \{0,\dots,n-1\}} \\
    &= \Big[K(\tfrac {k-k'}n\bm z\modone, \bm 0)\Big]_{k,k'\in \{0,\dots,n-1\}} \,.
\end{align*}
Thus, it is possible to diagonalize it by a discrete Fourier transform (DFT) matrix $\bm F = [\exp(2\pi\mathrm i \, k \, k'/n)]_{k,k'\in\{0,\dots,n-1\}}$ as
\begin{equation*}
    \bm K = \frac 1n \bm F \Diag( \bm F^\ast \bm c ) \bm F^\ast
    \quad\text{and}\quad
    \bm K^{-1} = \frac 1n \bm F \Diag( \bm F^\ast \bm c )^{-1} \bm F^\ast \,,
\end{equation*}
with $\bm c = [ K(\frac kn\bm z\modone,\bm 0) ]_{k\in\{0,\dots,n-1\}}$.
The kernel approximation with lattices is therefore computable using three fast Fourier transforms in $\mathcal O(n\log n)$ arithmetic operations.

In terms of the error it is known that for any given set of points, the kernel approximation \eqref{eq:kernelmethod} is optimal among all (linear or nonlinear) algorithms using function values at the same point set in the worst-case setting with respect to any error norm, cf.\ \cite{KKKNS21}.
In particular, we have that the worst-case error of the kernel approximation is bounded by the worst-case error of the classical lattice algorithm~\eqref{eq:classical}, i.e.,
\begin{equation*}
    \ewc(A_{\text{ker}}^{\bm X})
    \le \ewc(A_{\mathcal A}^{\bm X}) \,.
\end{equation*}
Thus, the bound from \Cref{classicalbound} applies to the kernel method as well.

 \section{Projection property of approximation algorithms}\label{sec:proj} 

A projection is a linear, idempotent map, meaning that the projection is equal to its composition with itself.
In this section we derive a condition under which the classical lattice algorithm $A_{\mathcal A}^{\bm X}$ defined in \eqref{eq:classical} is a projection.

Clearly we have linearity for $A_{\mathcal A}^{\bm X}$ but the idempotence is not always present.
For instance, if there are two aliasing frequencies, i.e., $\bm h,\bm h'\in\mathcal A$ with $\langle\bm h,\bm z\rangle \equiv_n \langle\bm h',\bm z\rangle$, we can take $f \coloneqq \exp(2\pi\mathrm i \langle \bm h, \cdot\rangle) + \exp(2\pi\mathrm i \langle \bm h', \cdot\rangle)$.
We then have $f\in \Span\{\exp(2\pi\mathrm i\langle\bm h,\cdot\rangle)\}_{\bm h\in\mathcal A}$ and yet by \eqref{eq:aliasing} we have
\begin{align*}
    A_{\mathcal A}^{\bm X} f
    = 2 f
    \neq f \,.
\end{align*}
Thus, to ensure idempotence it is necessary that there are no aliasing frequencies in the index set, that is, no indices have the same inner product with the generating vector $\bm z$ modulo $n$.

\begin{definition} A lattice $\bm X = \{\tfrac kn\bm z\modone\}_{k=0}^{n-1}\subseteq\mathds T^d$ is said to have the \emph{reconstructing property} on a frequency index set $\mathcal B\subseteq\mathds Z^d$ if 
    \begin{equation}\label{eq:recon}
        \langle\bm h,\bm z\rangle
        \not\equiv_n \langle\bm h',\bm z\rangle
        \quad\text{for all}\quad
        \bm h,\bm h'\in \mathcal B \,.
    \end{equation}
\end{definition} 

The reconstructing property is also sufficient for the idempotence as the following theorem shows.
For ease of readability, we will write $\mathcal B\subseteq\mathds Z^d$ when we have the reconstructing property and $\mathcal A\subseteq\mathds Z^d$ for a generic frequency index set.

\begin{theorem} Let $\mathcal B\subseteq\mathds Z^d$ be a nonempty frequency index set for which the lattice $\bm X = \{\frac kn\bm z\modone\}_{k=0}^{n-1} \subseteq\mathds T^d$ has the reconstructing property \eqref{eq:recon}.
    Then the classical lattice algorithm $A_{\mathcal B}^{\bm X}$ defined in \eqref{eq:classical} is a projection onto $\Span\{\exp(2\pi\mathrm i\langle\bm h,\cdot\rangle)\}_{\bm h\in\mathcal B}$.
\end{theorem}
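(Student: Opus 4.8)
The plan is to verify the three defining features of a projection onto $V \coloneqq \Span\{\exp(2\pi\mathrm i\langle\bm h,\cdot\rangle)\}_{\bm h\in\mathcal B}$ in turn: linearity, that the range of $A_{\mathcal B}^{\bm X}$ is contained in $V$, and that $A_{\mathcal B}^{\bm X}$ restricts to the identity on $V$. The first two are immediate from the defining formula \eqref{eq:classical}: linearity is inherited from the linear dependence of each coefficient $\hat g_{\bm h}$ on the function values, and every output is by construction a linear combination of the exponentials $\exp(2\pi\mathrm i\langle\bm h,\cdot\rangle)$ with $\bm h\in\mathcal B$, hence lies in $V$. Once these two facts are in hand, idempotence is automatic: for any $f$ the image $A_{\mathcal B}^{\bm X}f$ already lies in $V$, so applying $A_{\mathcal B}^{\bm X}$ once more and using that it fixes $V$ gives $A_{\mathcal B}^{\bm X}(A_{\mathcal B}^{\bm X}f)=A_{\mathcal B}^{\bm X}f$, which is exactly the required idempotence.

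The substance of the argument is therefore to show that $A_{\mathcal B}^{\bm X}$ acts as the identity on $V$. By linearity it suffices to check this on a single basis function $f=\exp(2\pi\mathrm i\langle\bm h',\cdot\rangle)$ with $\bm h'\in\mathcal B$, whose Fourier coefficients are $\hat f_{\bm\ell}=\delta_{\bm\ell=\bm h'}$. I would substitute these into the aliasing identity \eqref{eq:aliasing}, which for $\bm h\in\mathcal B$ expresses the coefficient $\hat g_{\bm h}$ of the approximation as the sum of $\hat f_{\bm\ell}$ over all $\bm\ell\in\mathds Z^d$ with $\langle\bm\ell,\bm z\rangle\equiv_n\langle\bm h,\bm z\rangle$. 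Only the single term $\bm\ell=\bm h'$ survives, so $\hat g_{\bm h}=1$ precisely when $\langle\bm h',\bm z\rangle\equiv_n\langle\bm h,\bm z\rangle$ and $\hat g_{\bm h}=0$ otherwise.

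At this point the reconstructing property \eqref{eq:recon} does the work: since both $\bm h$ and $\bm h'$ lie in $\mathcal B$, the congruence $\langle\bm h,\bm z\rangle\equiv_n\langle\bm h',\bm z\rangle$ can hold only when $\bm h=\bm h'$. Hence $\hat g_{\bm h}=\delta_{\bm h=\bm h'}$, so that $A_{\mathcal B}^{\bm X}f=\exp(2\pi\mathrm i\langle\bm h',\cdot\rangle)=f$. Combining this with linearity shows $A_{\mathcal B}^{\bm X}$ is the identity on all of $V$, and together with the range computation this establishes that $A_{\mathcal B}^{\bm X}$ is a projection onto $V$.

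I do not anticipate a genuine obstacle here; the argument is short and the only points demanding care are essentially bookkeeping. First, \eqref{eq:aliasing} is being applied to a single exponential, where the Fourier expansion is a finite sum, so no convergence question intrudes when isolating the surviving term. Second, the reconstructing property must be read as a statement about \emph{distinct} $\bm h,\bm h'\in\mathcal B$, which is precisely what turns the indicator of $\langle\bm h,\bm z\rangle\equiv_n\langle\bm h',\bm z\rangle$ into the Kronecker delta $\delta_{\bm h=\bm h'}$ and yields the identity action on the diagonal.
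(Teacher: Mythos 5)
Your proof is correct and takes essentially the same route as the paper: linearity and range containment are read off from \eqref{eq:classical}, and the aliasing formula \eqref{eq:aliasing} combined with the reconstructing property \eqref{eq:recon} shows the algorithm reproduces every function supported on $\mathcal B$, giving idempotence. Your reduction to single exponentials is just a more explicit bookkeeping of the step the paper states in one line.
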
 

\begin{proof} By definition we have that $A_{\mathcal B}^{\bm X}$ is linear and maps onto the linear space $\Span\{\exp(2\pi\mathrm i\langle\bm h,\cdot\rangle)\}_{\bm h\in\mathcal B}$.
    Combining the aliasing formula \eqref{eq:aliasing} with the reconstructing property of $\bm X$ on $\mathcal B$ as in \eqref{eq:recon}, we obtain $\hat g_{\bm h} = \hat f_{\bm h}$ for all $\bm h\in\mathcal B$ and confirm the idempotence.
\end{proof} 

There are CBC constructions that compute a generating vector $\bm z$ for a given frequency index set $\mathcal B$ and $n\in\mathds N$ large enough that the reconstructing property holds, see e.g., \cite{kaemmerer14, kaemmerer20}.
We investigate the opposite question: given a lattice, what is the set of frequencies on which we have the reconstructing property, or equivalently what is the space on which the lattice algorithm is a projection.

\begin{theorem}\label{recon} Let $\bm X = \{\frac kn \bm z\modone\}_{k=0}^{n-1}\subseteq\mathds T^d$ be a lattice.
    Further, let $H$ be the function space from \eqref{eq:H} and $A^{\bm X}$ a (linear or non-linear) approximation algorithm using only samples from $\bm X$.
    Then $\bm X$ is a lattice with reconstructing property \eqref{eq:recon} for the frequency index set
    \begin{equation}\label{eq:hammerfall}
        \Big\{\bm h\in\mathds Z^d : r(\bm h) < (\ewc(A^{\bm X}))^{-2} \Big\} \,.
    \end{equation}
    In particular, the reconstructing property holds for the index set \eqref{eq:hammerfall} defined with the
    best possible worst-case error achievable by any (linear or non-linear) algorithm using points from the lattice $\bm X$.
\end{theorem}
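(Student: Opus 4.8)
The plan is to argue by contraposition. I will show that whenever two distinct frequencies $\bm h\neq\bm h'$ satisfy the aliasing relation $\langle\bm h,\bm z\rangle\equiv_n\langle\bm h',\bm z\rangle$, at least one of them must satisfy $r(\bm h)\ge(\ewc(A^{\bm X}))^{-2}$, and hence cannot lie in the set \eqref{eq:hammerfall}. Consequently no two elements of \eqref{eq:hammerfall} can alias, which is precisely the reconstructing property \eqref{eq:recon}.

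The key observation is that aliasing renders the two exponentials indistinguishable on the lattice. Indeed, if $\langle\bm h,\bm z\rangle\equiv_n\langle\bm h',\bm z\rangle$, then at every node we have $\exp(2\pi\mathrm i\tfrac kn\langle\bm h,\bm z\rangle)=\exp(2\pi\mathrm i\tfrac kn\langle\bm h',\bm z\rangle)$ by the definition of $\equiv_n$, so the normalized difference
\begin{equation*}
    f \coloneqq \frac{1}{\sqrt{r(\bm h)+r(\bm h')}}\Big(\exp(2\pi\mathrm i\langle\bm h,\cdot\rangle) - \exp(2\pi\mathrm i\langle\bm h',\cdot\rangle)\Big)
\end{equation*}
vanishes at every sampling point of $\bm X$. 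By construction $\|f\|_H = 1$, and since the characters are orthonormal in $L_2$ one computes $\|f\|_{L_2}^2 = 2/(r(\bm h)+r(\bm h'))$.

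Next I would exploit that $A^{\bm X}$ only sees the samples. Because $f$ and $-f$ produce the identical (zero) sample vector, the algorithm returns the same output $g\coloneqq A^{\bm X}f = A^{\bm X}(-f)$, regardless of whether $A^{\bm X}$ is linear. The triangle inequality together with $\|{\pm}f\|_H = 1$ and the definition of the worst-case error then yields
\begin{equation*}
    2\,\|f\|_{L_2} = \|f-(-f)\|_{L_2} \le \|f-g\|_{L_2} + \|(-f)-g\|_{L_2} \le 2\,\ewc(A^{\bm X}) \,.
\end{equation*}
Substituting $\|f\|_{L_2}^2 = 2/(r(\bm h)+r(\bm h'))$ gives $r(\bm h)+r(\bm h') \ge 2\,(\ewc(A^{\bm X}))^{-2}$, whence $\max\{r(\bm h),r(\bm h')\}\ge(\ewc(A^{\bm X}))^{-2}$, which completes the contrapositive and thus the main assertion.

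Finally, for the ``in particular'' claim I would apply the result to an optimal algorithm. By the optimality of the kernel method recalled in \Cref{sec:classical}, the smallest achievable worst-case error among all sample-based algorithms on $\bm X$ is attained by $A_{\text{ker}}^{\bm X}$, so applying the first part to $A^{\bm X}=A_{\text{ker}}^{\bm X}$ furnishes the reconstructing property for the index set defined through this best possible error. The one delicate point in the whole argument is the handling of nonlinearity, which is exactly why I pass to the symmetric pair $\{f,-f\}$ and normalize in $H$, rather than attempting to rescale the output of $A^{\bm X}$; everything else is a routine norm computation. I expect the only place needing care beyond this is obtaining the sharp constant: the exact threshold $(\ewc(A^{\bm X}))^{-2}$, rather than a lossy version, emerges precisely because the governing estimate involves the sum $r(\bm h)+r(\bm h')$ and not a single term.
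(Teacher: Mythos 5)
Your proposal is correct and follows essentially the same route as the paper's proof: the identical fooling function $f=(\exp(2\pi\mathrm i\langle\bm h,\cdot\rangle)-\exp(2\pi\mathrm i\langle\bm h',\cdot\rangle))/\sqrt{r(\bm h)+r(\bm h')}$, the same symmetrization over $\{f,-f\}$ to handle non-linear algorithms, and the same conclusion that any aliasing pair has at least one member with $r\ge(\ewc(A^{\bm X}))^{-2}$. The only cosmetic differences are that you phrase the final step via $\max\{r(\bm h),r(\bm h')\}$ rather than the paper's WLOG ordering $r(\bm h)\le r(\bm h')$, and you justify the ``in particular'' clause explicitly through the optimality of the kernel method, which the paper leaves implicit.
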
 

\begin{proof} The proof uses ideas from \cite[Theorem~3.2]{BKUV17}, which showed a lower bound on the worst-case error using samples from lattices in function spaces of dominating mixed smoothness.

    We will show that for any two aliasing frequencies, at least one must lie outside of the proposed frequency index set.

    Let $\bm h, \bm h'\in\mathds Z^d$ with $\bm h\neq\bm h'$, $\langle\bm h, \bm z\rangle \equiv_n \langle\bm h', \bm z\rangle$, and $r(\bm h)\le r(\bm h')$.
    We define the fooling function
    \begin{equation*}
        f \coloneqq \frac{\exp(2\pi\mathrm i\langle\bm h, \cdot\rangle) - \exp(2\pi\mathrm i\langle\bm h', \cdot\rangle) }{\sqrt{r(\bm h)+r(\bm h')}} \,,
    \end{equation*}
    so that
    \begin{equation*}
        \|f\|_{H}
        = \sqrt{\frac{r(\bm h)+r(\bm h')}{r(\bm h)+r(\bm h')}}
        = 1
        \quad\text{and}\quad
        \|f\|_{L_2} = \sqrt{\frac{2}{r(\bm h)+r(\bm h')}} \,.
    \end{equation*}
    Because of the aliasing frequencies, this function evaluates to zero on the lattice $\bm X$, i.e.,
    \begin{equation*}
        f(\tfrac kn\bm z\modone)
        = \frac{\exp(2\pi\mathrm i\frac kn \langle\bm h, \bm z\rangle) - \exp(2\pi\mathrm i\frac kn \langle\bm h', \bm z\rangle) }{\sqrt{r(\bm h)+r(\bm h')}}
        = 0
        \quad\text{for}\quad
        k\in\{0, \dots, n-1\}\,.
    \end{equation*}
    If $A^{\bm X}$ were to be linear, we would have $A^{\bm X}(f) \equiv 0$ and $\|f\|_{L_2} = \|f-A^{\bm X}(f)\|_{L_2} \le \ewc(A^{\bm X})$.
    Since we also include non-linear algorithms we only have $A^{\bm X}(f) = A^{\bm X}(-f)$ and need an intermediate step for the same result:
    \begin{align*}
        \|f\|_{L_2}
        &= \frac 12 \|f - A^{\bm X}(f) + f + A^{\bm X}(-f)\|_{L_2} \\
        &\le \frac 12 \Big(\|f - A^{\bm X}(f)\|_{L_2} + \|(-f) - A^{\bm X}(-f)\|_{L_2}\Big) \\
        &\le \max\Big\{ \|f - A^{\bm X}(f)\|_{L_2}, \|(-f) - A^{\bm X}(-f)\|_{L_2} \Big\} \\
        &\le \ewc(A^{\bm X}) \,.
    \end{align*}
    Thus
    \begin{equation*}
        \frac{1}{r(\bm h')}
        \le \frac{2}{r(\bm h)+r(\bm h')}
        = \|f\|_{L_2}^{2}
        \le \ewc(A^{\bm X})^2 \,.
    \end{equation*}
    Since this holds for any two aliasing frequencies, at least one of the two aliasing frequencies must lie outside of the proposed frequency index set \eqref{eq:hammerfall}, which is equivalent to the reconstructing property.
\end{proof} 

Applying this to the classical lattice algorithm, we obtain the following result.

\begin{corollary}\label{classicalrecon} Let $\bm X = \{\frac kn\bm z\modone\}_{k=0}^{n-1}\subseteq\mathds T^d$ be a lattice and $\mathcal S_n(\bm z)$ be defined as in \eqref{eq:S}.
    Then $\bm X$ has the reconstructing property on
    \begin{equation}\label{eq:majesty}
        \Big\{\bm h\in\mathds Z^d : r(\bm h) \le \frac{1}{2 \sqrt{\mathcal S_n(\bm z)}} \Big\} \,.
    \end{equation}
\end{corollary}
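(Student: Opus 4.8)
The plan is to obtain this as a direct consequence of \Cref{recon}, specialized to a concrete algorithm whose worst-case error we can control, namely the classical lattice algorithm $A_{\mathcal A}^{\bm X}$ from \eqref{eq:classical}. The point is that \Cref{recon} applies to \emph{any} algorithm using samples of $\bm X$, so rather than appealing to the (unknown) optimal algorithm, I would simply feed in the explicit bound from \Cref{classicalbound}. No new estimate is required; the entire argument is a matter of comparing the two thresholds in the correct direction.

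First I would invoke \Cref{classicalbound} with the optimal radius $M = 1/\sqrt{\mathcal S_n(\bm z)}$, i.e.\ with $\mathcal A = \{\bm h\in\mathds Z^d : r(\bm h)\le M\}$, which gives the \emph{strict} bound
\begin{equation*}
    \ewc(A_{\mathcal A}^{\bm X})^2 < 2\sqrt{\mathcal S_n(\bm z)} \,.
\end{equation*}
Inverting this strict inequality yields
\begin{equation*}
    \big(\ewc(A_{\mathcal A}^{\bm X})\big)^{-2} > \frac{1}{2\sqrt{\mathcal S_n(\bm z)}} \,.
\end{equation*}
Applying \Cref{recon} with $A^{\bm X} = A_{\mathcal A}^{\bm X}$ then shows that $\bm X$ has the reconstructing property \eqref{eq:recon} on $\{\bm h\in\mathds Z^d : r(\bm h) < (\ewc(A_{\mathcal A}^{\bm X}))^{-2}\}$. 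Since every $\bm h$ in the set \eqref{eq:majesty} satisfies $r(\bm h)\le \tfrac{1}{2\sqrt{\mathcal S_n(\bm z)}} < (\ewc(A_{\mathcal A}^{\bm X}))^{-2}$, the set \eqref{eq:majesty} is contained in the index set furnished by \Cref{recon}. Because the reconstructing property \eqref{eq:recon} is a statement about all pairs $\bm h,\bm h'$ drawn from an index set, it is inherited by any subset, so it continues to hold on \eqref{eq:majesty}.

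There is essentially no obstacle here, but I would flag the one point that must be handled with care: the upgrade from the strict condition $r(\bm h) < (\ewc(A_{\mathcal A}^{\bm X}))^{-2}$ delivered by \Cref{recon} to the non-strict condition $r(\bm h)\le \tfrac{1}{2\sqrt{\mathcal S_n(\bm z)}}$ in \eqref{eq:majesty} is precisely what the \emph{strict} inequality in \Cref{classicalbound} provides — this is why the remark that $\mathcal S_n(\bm z)$ overcounts (strictly) is needed rather than cosmetic. The only thing to verify is that the comparison of thresholds runs the right way, which the displayed inversion makes transparent.
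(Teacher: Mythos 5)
Your proposal is correct and follows essentially the same route as the paper's own proof: apply \Cref{recon} to the classical lattice algorithm, substitute the strict bound $\ewc(A_{\mathcal A}^{\bm X})^2 < 2\sqrt{\mathcal S_n(\bm z)}$ from \Cref{classicalbound}, and observe that the set \eqref{eq:majesty} is then a subset of the reconstructing set, with the strictness of the bound justifying the non-strict inequality $\le$ in \eqref{eq:majesty}. Your explicit remarks on the direction of the threshold comparison and the subset-inheritance of the reconstructing property simply spell out what the paper states more tersely.
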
 

\begin{proof} Using the classical lattice algorithm \eqref{eq:classical} in \Cref{recon}, we obtain the reconstructing property on
    \begin{equation*}
        \Big\{\bm h\in\mathds Z^d : r(\bm h) < (\ewc(A_{\mathcal A}^{\bm X}))^{-2}\Big\} \,.
    \end{equation*}
    Now we plug in the bound \eqref{eq:classicalbound}, which only makes the frequency index set smaller.
    By definition, $\bm X$ retains the reconstructing property on the smaller set.
    We have $\le$ instead of $<$ in \eqref{eq:majesty} since equality is not attained in \eqref{classicalbound}.
\end{proof} 

Note, the frequency index set \eqref{eq:majesty} has half of the radius of the frequency index set used in the classical lattice algorithm in \Cref{classicalbound}, where we have $M = 1/\sqrt{\mathcal S_n(\bm z)}$.
With the latter choice we do not have the guarantee for the reconstructing property.
This difference could be an artifact of the proof, however.

 \section{Error bounds for subsampled rank-1 lattices}\label{sec:sub} 

In this section we investigate what happens to the projection property as well as to error bounds if we use only a subset of the lattice points $\bm X_J = \{\frac kn\bm z\modone\}_{k\in J}$ for $J\subseteq\{0,\dots,n-1\}$.
Instead of using the classical lattice algorithm, we will use a \emph{least squares approximation}
\begin{equation}\label{eq:lsqr}
    S_{\mathcal B}^{\bm X_J} f
    \coloneqq \Argmin_{g\in \Span\{\exp(2\pi\mathrm i\langle\bm h,\cdot\rangle)\}_{\bm h\in\mathcal B}} \sum_{k\in J} \Big| g(\tfrac kn\bm z\modone) - f(\tfrac kn\bm z\modone) \Big|^2 \,.
\end{equation}
Note that it is necessary for $\bm X$ to have the reconstructing property on $\mathcal B$ as otherwise there would be functions with aliasing frequencies having the same function values along the lattice and subsequently also along the subsampled lattice.
This would result in the non-uniqueness of the least squares approximation.
Furthermore, bad choices in $J$ would not work as well.
For instance, we need at least $|J|\ge|\mathcal B|$, otherwise we could construct a non-zero function that evaluates to zero on $\bm X_J$, again, yielding non-uniqueness.

Throughout this paper we will use multisets for $J$, meaning that elements may repeat.
For the cardinality $|J|$, elements are counted according to their repeated appearance, e.g.\ $|\{1, 1\}| = 2$.
Note that for the actual number of points used, or equivalently, the number of function evaluations used, it is not necessary to count by repeated appearance but doing so always provides an upper bound.

\begin{lemma}\label{lsqrcoeffs} Let $\mathcal B\subseteq\mathds Z^d$ be a nonempty frequency index set for which the lattice $\bm X = \{\frac kn\bm z\modone\}_{k=0}^{n-1} \subseteq\mathds T^d$ has the reconstructing property.
    For $J\subseteq\{0,\dots,n-1\}$ a multiset with $|J|\ge|\mathcal B|$, suppose that
    \begin{equation}\label{eq:L}
        \bm L_{J,\mathcal B}
        \coloneqq \Big[ \exp(2\pi\mathrm i\tfrac kn\langle\bm h, \bm z\rangle) \Big]_{k\in J,\, \bm h\in\mathcal B}
        \in\mathds C^{|J|\times|\mathcal B|}
    \end{equation}
    has full rank $|\mathcal B|$, and let $\bm f_{J} \coloneqq [f(\frac kn\bm z\modone)]_{k\in J}$ for $f\colon\mathds T^d\to\mathds C$.
    Then for the least squares approximation 
    $S_{\mathcal B}^{\bm X_J} f = \sum_{\bm h\in\mathcal B}\hat g_{\bm h} \exp(2\pi\mathrm i\langle\bm h,\cdot\rangle)$
    defined in \eqref{eq:lsqr}, we have the Fourier coefficients
    \begin{equation*}
        \bm{\hat g}_{\mathcal B}
        \coloneqq [\hat g_{\bm h}]_{\bm h\in\mathcal B}
        = (\bm L_{J,\mathcal B}^{\ast} \, \bm L_{J,\mathcal B})^{-1} \bm L_{J,\mathcal B}^{\ast} \, \bm f_J \,.
    \end{equation*}
\end{lemma}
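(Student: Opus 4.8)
The plan is to reduce the minimization over the finite-dimensional function space $\Span\{\exp(2\pi\mathrm i\langle\bm h,\cdot\rangle)\}_{\bm h\in\mathcal B}$ to an ordinary linear least squares problem for the coefficient vector $\bm{\hat g}_{\mathcal B}$, and then to invoke the normal equations. First I would parametrize any candidate $g = \sum_{\bm h\in\mathcal B}\hat g_{\bm h}\exp(2\pi\mathrm i\langle\bm h,\cdot\rangle)$ by the vector $\bm{\hat g}_{\mathcal B} = [\hat g_{\bm h}]_{\bm h\in\mathcal B}\in\mathds C^{|\mathcal B|}$. Evaluating $g$ at the subsampled lattice points and using the definition of $\bm L_{J,\mathcal B}$ in \eqref{eq:L} gives
\begin{equation*}
    g(\tfrac kn\bm z\modone)
    = \sum_{\bm h\in\mathcal B}\hat g_{\bm h}\exp(2\pi\mathrm i\tfrac kn\langle\bm h,\bm z\rangle)
    = \big(\bm L_{J,\mathcal B}\,\bm{\hat g}_{\mathcal B}\big)_k
    \quad\text{for } k\in J,
\end{equation*}
so that the objective in \eqref{eq:lsqr} becomes exactly $\|\bm L_{J,\mathcal B}\,\bm{\hat g}_{\mathcal B} - \bm f_J\|_2^2$.

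Next I would solve this standard complex linear least squares problem. The objective is a convex quadratic in $\bm{\hat g}_{\mathcal B}$, and the minimizer is characterized by orthogonality of the residual to the column space of $\bm L_{J,\mathcal B}$, i.e.\ by the normal equations $\bm L_{J,\mathcal B}^{\ast}\bm L_{J,\mathcal B}\,\bm{\hat g}_{\mathcal B} = \bm L_{J,\mathcal B}^{\ast}\bm f_J$. Since $\bm L_{J,\mathcal B}$ is assumed to have full column rank $|\mathcal B|$, the Gram matrix $\bm L_{J,\mathcal B}^{\ast}\bm L_{J,\mathcal B}\in\mathds C^{|\mathcal B|\times|\mathcal B|}$ is Hermitian positive definite, hence invertible. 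Solving yields
\begin{equation*}
    \bm{\hat g}_{\mathcal B}
    = (\bm L_{J,\mathcal B}^{\ast}\bm L_{J,\mathcal B})^{-1}\bm L_{J,\mathcal B}^{\ast}\bm f_J,
\end{equation*}
which is the claimed formula, and positive definiteness simultaneously gives uniqueness of the minimizer.

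The only point requiring genuine care—rather than a routine obstacle—is the justification that minimizing over $g$ in the span is equivalent to minimizing over coefficient vectors in $\mathds C^{|\mathcal B|}$. This needs the coefficient-to-function map $\bm{\hat g}_{\mathcal B}\mapsto g$ to be an injective linear map, which holds because the exponentials $\{\exp(2\pi\mathrm i\langle\bm h,\cdot\rangle)\}_{\bm h\in\mathcal B}$ with distinct frequencies are linearly independent on $\mathds T^d$; the reconstructing property \eqref{eq:recon} then guarantees, as noted before the lemma, that the least squares solution is in fact well-posed on the subsampled points. Once this bijection is in place, the passage between the functional formulation \eqref{eq:lsqr} and the matrix formulation is automatic, and the remainder is textbook linear algebra. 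I would keep the Wirtinger/real-differentiation computation of the gradient implicit, since convexity together with the normal equations already pins down the unique minimizer.
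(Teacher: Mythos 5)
Your proposal is correct and follows essentially the same route as the paper's proof: rewrite the functional minimization \eqref{eq:lsqr} in matrix-vector form as $\Argmin_{\bm{\hat g}_{\mathcal B}}\|\bm L_{J,\mathcal B}\,\bm{\hat g}_{\mathcal B}-\bm f_J\|_2^2$, pass to the normal equations, and invert the Gram matrix using the full-rank assumption. Your additional remarks on the injectivity of the coefficient-to-function map and the positive definiteness of $\bm L_{J,\mathcal B}^{\ast}\bm L_{J,\mathcal B}$ are just extra detail on steps the paper treats as immediate, not a different argument.
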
 

\begin{proof} The definition of the least squares approximation \eqref{eq:lsqr} can be rewritten in matrix-vector form using the wanted Fourier coefficients
    \begin{equation*}
        \Argmin_{\bm{\hat g}_{\mathcal B} \in\mathds C^{|\mathcal B|}}
        \| \bm L_{J,\mathcal B} \, \bm{\hat g}_{\mathcal B} - \bm f_J \|_{2}^{2} \,.
    \end{equation*}
    Computing this minimum, one uses the normal equation
    \begin{equation*}
        \bm L_{J,\mathcal B}^\ast \, \bm L_{J,\mathcal B} \, \bm{\hat g}_{\mathcal B} 
        = \bm L_{J,\mathcal B}^\ast \, \bm f_J \,.
    \end{equation*}
    By assumption the left-hand side matrix $\bm L_{J,\mathcal B}^\ast \, \bm L_{J,\mathcal B}$ has full rank and can therefore be inverted, yielding the assertion.
\end{proof} 

Note, that the full rank condition already implies that $\bm X$ has to have the reconstructing property on $\mathcal B$ and $|J|\ge|\mathcal B|$.
Next we show that given full rank of $\bm L_{J,\mathcal B}$, the least squares approximation still maintains the projection property.

\begin{lemma}\label{lsqrprojection} Let $\mathcal B\subseteq\mathds Z^d$ be a nonempty frequency index set for which the lattice $\bm X = \{\frac kn\bm z\modone\}_{k=0}^{n-1} \subseteq\mathds T^d$ has the reconstructing property.
    For $J\subseteq\{0,\dots,n-1\}$ a multiset with $|J|\ge|\mathcal B|$, suppose $\bm L_{J,\mathcal B}$ from \eqref{eq:L} has full rank $|\mathcal B|$.
    Then the least squares approximation $S_{\mathcal B}^{\bm X_J}$ defined in \eqref{eq:lsqr}, with the subsampled lattice $\bm X_J$, is a projection onto $\Span\{\exp(2\pi\mathrm i\langle\bm h,\cdot\rangle)\}_{\bm h\in\mathcal B}$.

\end{lemma}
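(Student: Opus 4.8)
The plan is to verify the three defining attributes of a projection in turn: linearity, containment of the range in $V \coloneqq \Span\{\exp(2\pi\mathrm i\langle\bm h,\cdot\rangle)\}_{\bm h\in\mathcal B}$, and idempotence. The first two are essentially immediate. Linearity follows from the closed-form expression in \Cref{lsqrcoeffs}: the map $f\mapsto\bm f_J$ is linear, the map $\bm f_J\mapsto\bm{\hat g}_{\mathcal B}=(\bm L_{J,\mathcal B}^{\ast}\bm L_{J,\mathcal B})^{-1}\bm L_{J,\mathcal B}^{\ast}\bm f_J$ is linear (matrix multiplication, well-defined precisely because the full-rank hypothesis makes $\bm L_{J,\mathcal B}^{\ast}\bm L_{J,\mathcal B}$ invertible), and the passage from the coefficient vector back to the trigonometric polynomial is linear; composing these gives linearity of $S_{\mathcal B}^{\bm X_J}$. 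That the range lies in $V$ is built into the definition \eqref{eq:lsqr}, where the minimizer is sought over $V$.

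The substance is idempotence, $S_{\mathcal B}^{\bm X_J}\circ S_{\mathcal B}^{\bm X_J}=S_{\mathcal B}^{\bm X_J}$. Since $S_{\mathcal B}^{\bm X_J}f\in V$ for every $f$, it suffices to show that $S_{\mathcal B}^{\bm X_J}$ fixes each $g\in V$. I would therefore fix $g=\sum_{\bm h\in\mathcal B}c_{\bm h}\exp(2\pi\mathrm i\langle\bm h,\cdot\rangle)$ with coefficient vector $\bm c\coloneqq[c_{\bm h}]_{\bm h\in\mathcal B}$ and compute its sample vector on the subsampled lattice. Evaluating at $\tfrac kn\bm z\modone$ gives $g(\tfrac kn\bm z\modone)=\sum_{\bm h\in\mathcal B}c_{\bm h}\exp(2\pi\mathrm i\tfrac kn\langle\bm h,\bm z\rangle)$, which is exactly the $k$-th entry of $\bm L_{J,\mathcal B}\,\bm c$; hence the sample vector $\bm g_J\coloneqq[g(\tfrac kn\bm z\modone)]_{k\in J}$ equals $\bm L_{J,\mathcal B}\,\bm c$. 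Substituting this into the formula from \Cref{lsqrcoeffs} yields
\begin{equation*}
    \bm{\hat g}_{\mathcal B}
    = (\bm L_{J,\mathcal B}^{\ast}\bm L_{J,\mathcal B})^{-1}\bm L_{J,\mathcal B}^{\ast}\,\bm L_{J,\mathcal B}\,\bm c
    = \bm c \,,
\end{equation*}
so $S_{\mathcal B}^{\bm X_J}g=g$. This is the crux of the argument: the telescoping $(\bm L_{J,\mathcal B}^{\ast}\bm L_{J,\mathcal B})^{-1}\bm L_{J,\mathcal B}^{\ast}\,\bm L_{J,\mathcal B}=\mathrm{Id}$ recovers the coefficients of an element of $V$ exactly.

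There is no serious obstacle here; the only point warranting a word of care is that $\bm c$ is genuinely well-defined, i.e.\ that the representation of $g\in V$ by its coefficient vector is unique. This holds because, for distinct frequencies, the characters $\{\exp(2\pi\mathrm i\langle\bm h,\cdot\rangle)\}_{\bm h\in\mathcal B}$ are orthonormal in $L_2(\mathds T^d)$ and hence linearly independent, so $\dim V=|\mathcal B|$ and the coefficient map $\bm c\mapsto g$ is a bijection. I would then conclude by the reduction above: for arbitrary $f$, writing $g\coloneqq S_{\mathcal B}^{\bm X_J}f\in V$, the computation gives $S_{\mathcal B}^{\bm X_J}(S_{\mathcal B}^{\bm X_J}f)=S_{\mathcal B}^{\bm X_J}g=g=S_{\mathcal B}^{\bm X_J}f$, which establishes idempotence and completes the proof that $S_{\mathcal B}^{\bm X_J}$ is a projection onto $V$.
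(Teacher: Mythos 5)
Your proof is correct and follows essentially the same route as the paper: verify linearity and the range condition from the definition, then establish idempotence by evaluating an element of the span on the subsampled lattice, obtaining the sample vector $\bm L_{J,\mathcal B}\,\bm c$, and applying the telescoping identity $(\bm L_{J,\mathcal B}^{\ast}\bm L_{J,\mathcal B})^{-1}\bm L_{J,\mathcal B}^{\ast}\bm L_{J,\mathcal B}=\mathrm{Id}$ from the coefficient formula of the preceding lemma. Your additional remark on the well-definedness of the coefficient vector (linear independence of the characters) is a small but harmless refinement of the paper's argument.
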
 

\begin{proof} By definition we have that $S_{\mathcal B}^{\bm X_J}$ maps linearly into $\Span\{\exp(2\pi\mathrm i\langle\bm h,\cdot\rangle)\}_{\bm h\in\mathcal B}$.
    To show the idempotence, we use the Fourier representation of a function supported on $\mathcal B$, i.e.,
    \begin{equation*}
        f = \sum_{\bm h\in\mathcal B} \hat f_{\bm h}\exp(2\pi\mathrm i\langle\bm h,\cdot\rangle) \in\Span\{\exp(2\pi\mathrm i\langle\bm h,\cdot\rangle)\}_{\bm h\in\mathcal B}\,,
    \end{equation*}
    and collect the Fourier coefficients in the vector $\bm{\hat f}_{\mathcal B} = [\hat f_{\bm h}]_{\bm h\in\mathcal B}$.
    From the reconstructing property, there holds $\bm f_J = \bm L_{J,\mathcal B}\,\bm{\hat f}_{\mathcal B}$.
    By \Cref{lsqrcoeffs} we have for the Fourier coefficients of the least squares approximation
    \begin{equation*}
        \bm{\hat g}_{\mathcal B}
        = (\bm L_{J,\mathcal B}^\ast\,\bm L_{J,\mathcal B})^{-1} \bm L_{J,\mathcal B}^\ast \, \bm f_J
        = (\bm L_{J,\mathcal B}^\ast\,\bm L_{J,\mathcal B})^{-1} \bm L_{J,\mathcal B}^\ast \, \bm L_{J,\mathcal B} \, \bm{\hat f}_{\mathcal B}
        = \bm{\hat f}_{\mathcal B} \,,
    \end{equation*}
    where the inverse exists because of the assumptions on $\bm L_{J,\mathcal B}$ having full rank.
    The Fourier coefficients of the least squares approximation coincide with those of the function~$f$.
    Thus, they are the same functions, which proves the assertion.
\end{proof} 

Before commenting on a specific choice of $J$, we relate the worst-case error of the least squares approximation to the spectral properties of certain matrices.

\begin{theorem}\label{generalbound} Let $\mathcal B\subseteq\mathds Z^d$ be a nonempty frequency index set for which the lattice $\bm X = \{\frac kn\bm z\modone\}_{k=0}^{n-1} \subseteq\mathds T^d$ has the reconstructing property.
    For $J\subseteq\{0,\dots,n-1\}$ a multiset with $|J|\ge|\mathcal B|$, suppose $\bm L_{J,\mathcal B}$ in \eqref{eq:L} has full rank, and define
    \begin{equation}\label{eq:Phi}
        \bm\Phi_{J,\mathcal B}
        \coloneqq \Big[
        \frac{\exp(2\pi\mathrm i\frac{k}{n}\langle\bm h,\bm z\rangle)}{\sqrt{r(\bm h)}}
        \Big]_{k\in J,\,\bm h\notin\mathcal B} \,.
    \end{equation}
    Then the worst-case error of the least squares approximation defined in \eqref{eq:lsqr} with respect to the function space defined in \eqref{eq:H} satisfies the bound
    \begin{equation*}
        \ewc(S_{\mathcal B}^{\bm X_J})^2
        \le \sup_{\bm h\notin\mathcal B} \frac{1}{r(\bm h)}
        + 
        \frac{ \|\bm\Phi_{J,\mathcal B}\|_{2}^{2} }{ \sigma_{\min}^2(\bm L_{J,\mathcal B}) } \,,
    \end{equation*}
    where $\sigma_{\min}(\bm L_{J, \mathcal B})$ denotes the smallest singular value of $\bm L_{J,\mathcal B}$.
\end{theorem}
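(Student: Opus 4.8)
The plan is to split the target function into its low-frequency part supported on $\mathcal B$ and its high-frequency tail, use that $S_{\mathcal B}^{\bm X_J}$ is a projection by \Cref{lsqrprojection} to discard the low-frequency part, and then estimate the two remaining contributions separately using $L_2$-orthogonality. Concretely, I would fix $f\in H$ with $\|f\|_H\le 1$ and write $f = f_{\mathcal B} + f_\perp$, where $f_{\mathcal B}$ collects the Fourier modes $\bm h\in\mathcal B$ and $f_\perp$ those with $\bm h\notin\mathcal B$. Since $S_{\mathcal B}^{\bm X_J}$ is linear and reproduces $f_{\mathcal B}$, the error collapses to $f - S_{\mathcal B}^{\bm X_J} f = f_\perp - S_{\mathcal B}^{\bm X_J} f_\perp$. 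As $f_\perp$ is supported on frequencies outside $\mathcal B$ while $S_{\mathcal B}^{\bm X_J} f_\perp$ is supported on $\mathcal B$, the two are orthogonal in $L_2$, so Parseval yields
\[
    \ewc(S_{\mathcal B}^{\bm X_J})\text{-type bound:}\quad
    \|f - S_{\mathcal B}^{\bm X_J} f\|_{L_2}^2 = \|f_\perp\|_{L_2}^2 + \|S_{\mathcal B}^{\bm X_J} f_\perp\|_{L_2}^2 .
\]

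The first (aliasing) term is elementary: using $\|f\|_H\le 1$,
\[
    \|f_\perp\|_{L_2}^2 = \sum_{\bm h\notin\mathcal B}|\hat f_{\bm h}|^2
    \le \Big(\sup_{\bm h\notin\mathcal B}\tfrac{1}{r(\bm h)}\Big)\sum_{\bm h\notin\mathcal B} r(\bm h)\,|\hat f_{\bm h}|^2
    \le \sup_{\bm h\notin\mathcal B}\tfrac{1}{r(\bm h)} .
\]
For the second term I would invoke \Cref{lsqrcoeffs}: by linearity the Fourier coefficients of $S_{\mathcal B}^{\bm X_J} f_\perp$ are $\bm{\hat g}_{\mathcal B} = (\bm L_{J,\mathcal B}^{\ast}\bm L_{J,\mathcal B})^{-1}\bm L_{J,\mathcal B}^{\ast}\,\bm f_J$, where $\bm f_J$ now holds the samples of $f_\perp$ along $\bm X_J$. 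The key algebraic step is to factor these samples as $\bm f_J = \bm\Phi_{J,\mathcal B}\,\bm w$ with $\bm w \coloneqq [\sqrt{r(\bm h)}\,\hat f_{\bm h}]_{\bm h\notin\mathcal B}$; indeed $(\bm\Phi_{J,\mathcal B}\bm w)_k = \sum_{\bm h\notin\mathcal B}\exp(2\pi\mathrm i\tfrac kn\langle\bm h,\bm z\rangle)\hat f_{\bm h} = f_\perp(\tfrac kn\bm z\modone)$, and $\|\bm w\|_2^2 = \sum_{\bm h\notin\mathcal B} r(\bm h)|\hat f_{\bm h}|^2 \le 1$. This is precisely the purpose of the $1/\sqrt{r(\bm h)}$ weighting built into $\bm\Phi_{J,\mathcal B}$ in \eqref{eq:Phi}. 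Since $\bm L_{J,\mathcal B}$ has full column rank, the matrix $(\bm L_{J,\mathcal B}^{\ast}\bm L_{J,\mathcal B})^{-1}\bm L_{J,\mathcal B}^{\ast}$ is its Moore--Penrose pseudoinverse, whose spectral norm equals $1/\sigma_{\min}(\bm L_{J,\mathcal B})$. Combining Parseval with submultiplicativity then gives
\[
    \|S_{\mathcal B}^{\bm X_J} f_\perp\|_{L_2}^2 = \|\bm{\hat g}_{\mathcal B}\|_2^2
    \le \frac{\|\bm\Phi_{J,\mathcal B}\|_2^2\,\|\bm w\|_2^2}{\sigma_{\min}^2(\bm L_{J,\mathcal B})}
    \le \frac{\|\bm\Phi_{J,\mathcal B}\|_2^2}{\sigma_{\min}^2(\bm L_{J,\mathcal B})} ,
\]
and adding the two bounds and taking the supremum over $\|f\|_H\le 1$ yields the claim.

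The individual steps are routine linear algebra; the point requiring the most care is that $\bm\Phi_{J,\mathcal B}$ has infinitely many columns, so $\|\bm\Phi_{J,\mathcal B}\|_2$ must be interpreted as the operator norm from $\ell^2(\mathds Z^d\setminus\mathcal B)$ into $\mathds C^{|J|}$, and the factorization $\bm f_J = \bm\Phi_{J,\mathcal B}\bm w$ involves an infinite series. I would justify the absolute convergence of this series (hence that the samples $f(\tfrac kn\bm z\modone)$ are well defined and the factorization is legitimate) from the continuity of point evaluation on $H$ together with $\bm w\in\ell^2$; once this is in place, the finite-dimensional estimates above apply verbatim.
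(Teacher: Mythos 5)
Your proposal is correct and follows essentially the same route as the paper's proof: the same orthogonal splitting into the tail $f_\perp$ and the least-squares image of the tail (the paper writes it as $(f-P_{\mathcal B}f)+(P_{\mathcal B}f-S_{\mathcal B}^{\bm X_J}f)$ and then invokes the projection property, while you invoke the projection property first, which is an immaterial reordering), the same use of \Cref{lsqrcoeffs} and the factorization $\bm f_J=\bm\Phi_{J,\mathcal B}\bm w$, and the same identity $\|(\bm L_{J,\mathcal B}^\ast\bm L_{J,\mathcal B})^{-1}\bm L_{J,\mathcal B}^\ast\|_2=1/\sigma_{\min}(\bm L_{J,\mathcal B})$ (which the paper cites from the literature and you justify via the pseudoinverse). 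Your closing remark on interpreting $\|\bm\Phi_{J,\mathcal B}\|_2$ as an operator norm on $\ell^2$ is a point the paper leaves implicit, but it does not change the argument.
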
 

\begin{proof} For $f\in H$ we write $f-S_{\mathcal B}^{\bm X_J}f = (f-P_{\mathcal B}f) + (P_{\mathcal B}f-S_{\mathcal B}^{\bm X_J}f)$, where $P_{\mathcal B} f = \sum_{\bm h\in\mathcal B} \hat f_{\bm h} \exp(2\pi\mathrm i\langle\bm h,\cdot\rangle)$ is the $L_2$-projection of $f$ onto $\Span\{\exp(2\pi\mathrm i\langle\bm h,\cdot\rangle)\}_{\bm h\in\mathcal B}$.
    This decomposition is orthogonal because by definition $S_{\mathcal B}^{\bm X_J}f = P_{\mathcal B} \, S_{\mathcal B}^{\bm X_J}f$.
    Thus,
    \begin{equation}\label{eq:errordecomposition}
        \|S_{\mathcal B}^{\bm X_J} f - f\|_{L_2}^2
        =
        \|f - P_{\mathcal B} f\|_{L_2}^2
        + \|P_{\mathcal B} f - S_{\mathcal B}^{\bm X_J} f\|_{L_2}^2 \,.
    \end{equation}
    For the first summand of \eqref{eq:errordecomposition} we use
    \begin{equation*}
        \|f - P_{\mathcal B} f\|_{L_2}^2
        = \sum_{\bm h\notin\mathcal B} \frac{1}{r(\bm h)} \, r(\bm h) \, |\hat f_{\bm h}|^2
        \le \sup_{\bm h\notin\mathcal B} \frac{1}{r(\bm h)} \, \|f\|_H^2 \,.
    \end{equation*}

    Now we estimate the second summand of \eqref{eq:errordecomposition}.
    By \Cref{lsqrprojection}, $S_{\mathcal B}^{\bm X_J}$ is a projection onto $\Span\{\exp(2\pi\mathrm i\langle\bm h,\cdot\rangle)\}_{\bm h\in\mathcal B}$ and we obtain using \Cref{lsqrcoeffs}
    \begin{align*}
        \|P_{\mathcal B} f - S_{\mathcal B}^{\bm X_J} f\|_{L_2}^2
        &= \|S_{\mathcal B}^{\bm X_J} P_{\mathcal B} f - S_{\mathcal B}^{\bm X_J} f\|_{L_2}^2
        = \|S_{\mathcal B}^{\bm X_J} (P_{\mathcal B} f - f)\|_{L_2}^2 \\
        &\le \Big\|(\bm L_{J,\mathcal B}^\ast \, \bm L_{J,\mathcal B})^{-1} \bm L_{J,\mathcal B}^{\ast} \Big[(P_{\mathcal B} f-f) (\tfrac kn\bm z \modone)\Big]_{k\in J} \Big\|_{2}^2 \\
        &\le \|(\bm L_{J,\mathcal B}^\ast \, \bm L_{J,\mathcal B})^{-1} \bm L_{J,\mathcal B}^{\ast} \|_{2}^2 \, \sum_{k\in J} \Big|(P_{\mathcal B} f-f) (\tfrac kn\bm z \modone) \Big|^2 \,.
    \end{align*}
    By \cite[Proposition~3.1]{KUV19}, we have $\| (\bm L_{J,\mathcal B}^\ast \, \bm L_{J,\mathcal B})^{-1}\bm L_{J,\mathcal B}^\ast \|_{2} = 1/\sigma_{\min} (\bm L_{J,\mathcal B})$.
    On the other hand, we have
    \begin{align*}
        \sum_{k\in J} \Big|(f-P_{\mathcal B} f) (\tfrac kn \bm z \modone) \Big|^2
        &= \sum_{k\in J} \Big| \sum_{\bm h\notin\mathcal B} \hat f_{\bm h} \exp(2\pi\mathrm i \tfrac{k}{n}\langle\bm h, \bm z\rangle) \Big|^2 \\
        &= \sum_{k\in J} \Big| \sum_{\bm h\notin\mathcal B} \sqrt{r(\bm h)} \, \hat f_{\bm h} \, \frac{1}{\sqrt{r(\bm h)}} \, \exp(2\pi\mathrm i \tfrac{k}{n}\langle\bm h, \bm z\rangle) \Big|^2 \\
        &= \|\bm\Phi_{J,\mathcal B} \, [\sqrt{r(\bm h)} \, \hat f_{\bm h}]_{\bm h\notin\mathcal B} \|_{2}^{2} \\
        &\le \|\bm\Phi_{J,\mathcal B}\|_{2}^{2} \, \|f\|_H^2 \,.
    \end{align*}
    Combining the two estimates completes the proof
\end{proof} 

We have seen in \Cref{lsqrprojection} that it is possible to reconstruct trigonometric polynomials from values at a subsampled lattice $\bm X_J = \{\frac kn \bm z\modone\}_{k\in J} \subseteq\mathds T^d$ for $J\subseteq\{0, \dots, n-1\}$ given that $\bm L_{J,\mathcal B}$ has full rank.
The following lemma quantifies this for a randomly selected subset $J$.

\begin{lemma}\label{constructions} Let $\mathcal B\subseteq\mathds Z^d$ be a nonempty frequency index set for which the lattice $\bm X = \{\frac kn\bm z\modone\}_{k=0}^{n-1} \subseteq\mathds T^d$ has the reconstructing property.
    Further, let $t>0$ and $J\subseteq\{0, \dots, n-1\}$ be a multiset of uniformly i.i.d.\ drawn integers with
    \begin{equation}\label{eq:12}
        |J| \ge 12 \, |\mathcal B| \, (\log|\mathcal B| + t) \,.
    \end{equation}
    For $\bm L_{J,\mathcal B}$ defined as in \eqref{eq:L} we have the following two inequalities, each holding with probability exceeding $1-\exp(-t)$,
    \begin{equation}\label{eq:sepultura}
        \frac{|J|}{2} \le \sigma_{\min}^2(\bm L_{J,\mathcal B})
        \quad\text{and}\quad
        \sigma_{\max}^2(\bm L_{J,\mathcal B}) \le \frac{3|J|}{2} \,.
    \end{equation}
    For $|J|\ge 3$, $\bm\Phi_{J,\mathcal B}$ as in \eqref{eq:Phi}, and $\bm\Phi_{\mathcal B} = \bm\Phi_{\{0,\dots,n-1\},\mathcal B}$, we have with probability exceeding $1-2^{3/4}\exp(-t)$ that
    \begin{equation}\label{eq:watain}
        \|\bm\Phi_{J,\mathcal B}\|_{2}^2
        \le 21 \, (\log|J|+t) \sum_{\bm h\notin\mathcal B} \frac{1}{r(\bm h)} + 2\frac{|J|}{n} \|\bm\Phi_{\mathcal B}\|_{2}^2 \,.
    \end{equation}
\end{lemma}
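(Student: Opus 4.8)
The plan is to read all three estimates as matrix concentration inequalities for sums of $|J|$ independent, identically distributed, rank-one positive semidefinite matrices, one for each sampled index $k\in J$. Write the $k$-th row of $\bm L_{J,\mathcal B}$ as $\bm\ell_k \coloneqq [\exp(2\pi\mathrm i\tfrac kn\langle\bm h,\bm z\rangle)]_{\bm h\in\mathcal B}\in\mathds C^{|\mathcal B|}$ and the $k$-th row of $\bm\Phi_{J,\mathcal B}$ as $\bm\psi_k \coloneqq [r(\bm h)^{-1/2}\exp(2\pi\mathrm i\tfrac kn\langle\bm h,\bm z\rangle)]_{\bm h\notin\mathcal B}$, so that $\bm L_{J,\mathcal B}^\ast\bm L_{J,\mathcal B}=\sum_{k\in J}\bm\ell_k\bm\ell_k^\ast$ and $\bm\Phi_{J,\mathcal B}^\ast\bm\Phi_{J,\mathcal B}=\sum_{k\in J}\bm\psi_k\bm\psi_k^\ast$. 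Since $J$ consists of uniformly i.i.d.\ indices, both are sums of i.i.d.\ summands, and the character property together with the reconstructing property of $\bm X$ on $\mathcal B$ computes the means exactly: $\mathds E[\bm\ell_k\bm\ell_k^\ast]=\bm I_{|\mathcal B|}$ and $\mathds E[\bm\psi_k\bm\psi_k^\ast]=\tfrac1n\bm\Phi_{\mathcal B}^\ast\bm\Phi_{\mathcal B}$. The tool throughout is the matrix Chernoff inequality of \cite{Tropp11}.

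For \eqref{eq:sepultura} I would apply the matrix Chernoff bound to $\sum_{k\in J}\bm\ell_k\bm\ell_k^\ast$ in the finite dimension $|\mathcal B|$. Here $\|\bm\ell_k\|_2^2=|\mathcal B|$ (each entry has modulus one) is the uniform bound on $\lambda_{\max}(\bm\ell_k\bm\ell_k^\ast)$, and $\sum_{k\in J}\mathds E[\bm\ell_k\bm\ell_k^\ast]=|J|\,\bm I$ gives $\mu_{\min}=\mu_{\max}=|J|$. With deviation $\delta=\tfrac12$ the lower- and upper-tail Chernoff bounds read $|\mathcal B|\,[\mathrm e^{-1/2}/(1/2)^{1/2}]^{|J|/|\mathcal B|}$ and $|\mathcal B|\,[\mathrm e^{1/2}/(3/2)^{3/2}]^{|J|/|\mathcal B|}$, and substituting the hypothesis $|J|\ge 12\,|\mathcal B|(\log|\mathcal B|+t)$ makes each at most $\mathrm e^{-t}$; the binding one is the weaker upper tail, for which the constant $12$ is comfortably large enough. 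This yields $\sigma_{\min}^2(\bm L_{J,\mathcal B})=\lambda_{\min}(\bm L_{J,\mathcal B}^\ast\bm L_{J,\mathcal B})\ge|J|/2$ and $\sigma_{\max}^2(\bm L_{J,\mathcal B})\le 3|J|/2$, each with probability exceeding $1-\mathrm e^{-t}$.

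For \eqref{eq:watain} the quantity to control is $\|\bm\Phi_{J,\mathcal B}\|_2^2=\lambda_{\max}(\sum_{k\in J}\bm\psi_k\bm\psi_k^\ast)$. The uniform bound is now $\|\bm\psi_k\|_2^2=\sum_{\bm h\notin\mathcal B}1/r(\bm h)$, which is exactly the prefactor of the first term in \eqref{eq:watain}, and the mean satisfies $\lambda_{\max}(\sum_{k\in J}\mathds E[\bm\psi_k\bm\psi_k^\ast])=\tfrac{|J|}n\|\bm\Phi_{\mathcal B}\|_2^2$, which becomes the second term (the extra factor $2$ being produced by the concentration). Splitting into the regime where the mean dominates and the regime where the deviation dominates, the matrix Chernoff bound delivers a sum of the shape $c\,(\log(\cdot)+t)\sum_{\bm h\notin\mathcal B}1/r(\bm h)+2\tfrac{|J|}n\|\bm\Phi_{\mathcal B}\|_2^2$, and the remaining work is to pin down the constant $c=21$ and the failure probability $2^{3/4}\mathrm e^{-t}$.

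The main obstacle is that $\bm\Phi_{J,\mathcal B}$ has infinitely many columns (one per $\bm h\notin\mathcal B$), so the ambient dimension in the classical matrix Chernoff prefactor is infinite. What rescues the argument is that the summands have finite trace, $\Trace(\mathds E[\bm\psi_k\bm\psi_k^\ast])=\sum_{\bm h\notin\mathcal B}1/r(\bm h)<\infty$, and that $\sum_{k\in J}\bm\psi_k\bm\psi_k^\ast$ has rank at most $|J|$. Consequently I would invoke the intrinsic-dimension (effective-rank) form of the matrix Chernoff bound, in which the dimension prefactor is replaced by a finite quantity governed by this trace; this is the source of the $\log|J|$ factor and the reason the mild condition $|J|\ge3$ is imposed. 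I expect the delicate point to be carrying the explicit constants through the intrinsic-dimension bound so as to land exactly on $21$, $2$, and $2^{3/4}$, rather than the conceptual structure, which is routine once the mean and the uniform norm bound above are in hand.
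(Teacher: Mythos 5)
Your treatment of \eqref{eq:sepultura} is correct and is essentially the paper's argument: the paper routes it through \cite[Lemma~2.2 and Theorem~3.1]{BKPU22} (reconstructing property $\Leftrightarrow$ exact $L_2$-Marcinkiewicz--Zygmund equality, then random subsampling), but the engine inside that citation is exactly your computation --- $\mathds E[\bm\ell_k\bm\ell_k^\ast]=\bm I$ via the character property plus the reconstructing property, uniform bound $\|\bm\ell_k\|_2^2=|\mathcal B|$, and the matrix Chernoff inequality of \cite{Tropp11} with $\delta=\tfrac12$, for which the oversampling factor $12$ makes both tails at most $\exp(-t)$. That part stands.

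For \eqref{eq:watain} there is a genuine gap. The tool you name, an ``intrinsic-dimension (effective-rank) matrix Chernoff bound,'' does not deliver the stated inequality: in such bounds the dimensional prefactor is the intrinsic dimension of (an upper bound on) the \emph{expectation} $\sum_{k\in J}\mathds E[\bm\psi_k\bm\psi_k^\ast]=\tfrac{|J|}{n}\bm\Phi_{\mathcal B}^\ast\bm\Phi_{\mathcal B}$, namely $\Trace(\bm\Phi_{\mathcal B}^\ast\bm\Phi_{\mathcal B})/\|\bm\Phi_{\mathcal B}\|_2^2$. This quantity is a property of the tail of $1/r(\cdot)$ and of $\mathcal B$; it is unrelated to $|J|$ and can be arbitrarily large compared with it, so beating that prefactor forces a deviation term of order $(\log(\mathrm{intdim})+t)\sum_{\bm h\notin\mathcal B}1/r(\bm h)$, not $(\log|J|+t)\sum_{\bm h\notin\mathcal B}1/r(\bm h)$. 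Your alternative observation that the \emph{realized} sum $\sum_{k\in J}\bm\psi_k\bm\psi_k^\ast$ has rank at most $|J|$ cannot be fed into an off-the-shelf intrinsic-dimension bound either, because the projection onto the span of the samples is random and does not commute with taking the expectation, so the comparison with the mean term $\tfrac{|J|}{n}\|\bm\Phi_{\mathcal B}\|_2^2$ breaks. The paper closes exactly this hole by invoking a concentration inequality designed for i.i.d.\ sequences in (infinite-dimensional) $\ell_2$ with finite trace, \cite[Proposition~3.8]{MU21}: with probability exceeding $1-2^{3/4}|J|^{1-\beta}$,
\begin{equation*}
    \Big\|\tfrac{1}{|J|}\bm\Phi_{J,\mathcal B}^\ast\bm\Phi_{J,\mathcal B}-\tfrac1n\bm\Phi_{\mathcal B}^\ast\bm\Phi_{\mathcal B}\Big\|_2
    \le\max\Big\{\tfrac{21\beta\log|J|}{|J|}\sum_{\bm h\notin\mathcal B}\tfrac{1}{r(\bm h)},\,\tfrac1n\|\bm\Phi_{\mathcal B}^\ast\bm\Phi_{\mathcal B}\|_2\Big\}\,,
\end{equation*}
and then chooses $\beta=(\log|J|+t)/\log|J|$, which is precisely where the constants $21$ and $2^{3/4}$, the factor $2$ in front of $\tfrac{|J|}{n}\|\bm\Phi_{\mathcal B}\|_2^2$, and the hypothesis $|J|\ge3$ come from. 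Without this (or an equivalent infinite-dimensional result whose prefactor is governed by the sample size), your constant-chasing step has nothing to chase.
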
 

\begin{proof} By \cite[Lemma~2.2]{BKPU22} the reconstructing property of $\bm X$ corresponds to an exact $L_2$-Marcinkiewicz--Zygmund inequality, i.e.,
    \begin{equation*}
        \|f\|_{L_2}^{2}
        = \frac 1n \sum_{k=0}^{n-1} |f(\tfrac kn \bm z\modone)|^2
        \quad\text{for all}\quad
        f\in\Span\{\exp(2\pi\mathrm i\langle\bm h,\cdot\rangle)\}_{\bm h\in\mathcal B} \,.
    \end{equation*}
    With the logarithmic oversampling assumption \eqref{eq:12} on $J$, we are able to use the technique of \cite[Theorem~3.1]{BKPU22}, which yields that the subsampled points fulfill an $L_2$-Marcinkiewicz--Zygmund inequality with constants $1/2$ and $3/2$, i.e.,
    \begin{equation*}
        \frac{1}{2} \|f\|_{L_2}^{2}
        \le \frac{1}{|J|} \sum_{k\in J} |f(\tfrac kn \bm z\modone)|^2
        \le \frac{3}{2} \|f\|_{L_2}^{2}
        \;\;\text{for all}\;\;
        f\in\Span\{\exp(2\pi\mathrm i\langle\bm h,\cdot\rangle)\}_{\bm h\in\mathcal B} \,,
    \end{equation*}
    where from their proof each bound holds with probability $1-\exp(-t)$, respectively.
    This random technique uses a matrix Chernoff concentration inequality \cite[Theorem~1.1]{Tropp11} with one of the first applications in the sampling context appearing in \cite[Theorem~2.1]{CM17}.
    Using \cite[Lemma~2.2]{BKPU22} again converts this to the desired bounds on the singular values in \eqref{eq:sepultura}.

    For $k\in\{0, \dots, n-1\}$, we define the sequences
    \begin{equation*}
        \bm y^k \coloneqq \Big[ \frac{\exp(-2\pi\mathrm i\frac kn \langle\bm h, \bm z\rangle)}{\sqrt{r(\bm h)}} \Big]_{\bm h\notin\mathcal B} \,,
    \end{equation*}
    with $\|\bm y^k\|_2^2 \le \sum_{\bm h\notin\mathcal B} 1/r(\bm h)$, so that
    \begin{equation*}
        \frac{1}{|J|} \sum_{k\in J} \bm y^k(\bm y^k)^\ast
        = \frac{1}{|J|} \sum_{k\in J}\Big[ \frac{\exp(2\pi\mathrm i\tfrac kn\langle\bm h'-\bm h, \bm z\rangle)}{\sqrt{r(\bm h)r(\bm h')}}\Big]_{\bm h,\bm h'\notin\mathcal B}
        = \frac{1}{|J|} \bm\Phi_{J,\mathcal B}^{\ast}\bm\Phi_{J,\mathcal B} \,.
    \end{equation*}
    The set $J$ gives us a random set of sequences $\{\bm y^k\}_{k\in J}$ for which we know
    \begin{equation*}
        \mathds E_k(\bm y^k (\bm y^k)^\ast)
        = \frac 1n \sum_{k=0}^{n-1} \Big[ \frac{\exp(2\pi\mathrm i\frac kn \langle\bm h'-\bm h, \bm z\rangle)}{\sqrt{r(\bm h) \, r(\bm h')}} \Big]_{\bm h,\bm h'\notin\mathcal B}
        = \frac 1n \bm\Phi_{\mathcal B}^\ast \, \bm\Phi_{\mathcal B} \,,
    \end{equation*}
    where the expectation is taken with respect to the discrete random variable $k$ uniformly sampled from $\{0,\dots,n-1\}$.
    Applying \cite[Proposition~3.8]{MU21} on these random sequences (with $n$ there replaced by $|J|\ge 3$) yields with probability exceeding $1-2^{3/4} |J|^{1-\beta}$ for $\beta>1$ that
    \begin{equation*}
        \Big\| \frac{1}{|J|} \bm\Phi_{J,\mathcal B}^{\ast}\bm\Phi_{J,\mathcal B} - \frac 1n \bm\Phi_{\mathcal B}^\ast \, \bm\Phi_{\mathcal B} \Big\|_{2}
        \le \max\Big\{\frac{21 \beta\log |J|}{|J|} \sum_{\bm h\notin\mathcal B} \frac{1}{r(\bm h)},\, \frac 1n \|\bm\Phi_{\mathcal B}^\ast \, \bm\Phi_{\mathcal B}\|_{2}\Big\} \,.
    \end{equation*}
    Setting $\beta = (\log |J|+t)/\log |J|$ for $t>0$, this gives with probability exceeding $1-2^{3/4}\exp(-t)$ that
    \begin{equation*}
        \Big\| \frac{1}{|J|} \bm\Phi_{J,\mathcal B}^{\ast}\bm\Phi_{J,\mathcal B} - \frac 1n \bm\Phi_{\mathcal B}^\ast \, \bm\Phi_{\mathcal B} \Big\|_{2}
        \le \max\Big\{\frac{21 \, (\log |J|+t)}{|J|} \sum_{\bm h\notin\mathcal B} \frac{1}{r(\bm h)},\, \frac 1n \|\bm\Phi_{\mathcal B}^\ast\,\bm\Phi_{\mathcal B}\|_{2}\Big\} \,,
    \end{equation*}
    from which the assertion \eqref{eq:watain} follows by basic arithmetic operations.
\end{proof} 

Using these spectral bounds in the general error bound for least squares approximation yields the following theorem.

\begin{theorem}\label{withPhi} Let $\mathcal B\subseteq\mathds Z^d$ be a nonempty frequency index set for which the lattice $\bm X = \{\frac kn\bm z\modone\}_{k=0}^{n-1} \subseteq\mathds T^d$ has the reconstructing property.
    Further, let $t\ge4$ and $J\subseteq\{0, \dots, n-1\}$ be a multiset of uniformly i.i.d.\ drawn integers with
    \begin{equation*}
|J| := \lceil 12 \, |\mathcal B| \, (\log|\mathcal B| + t)\rceil\,.
    \end{equation*}
    Then the least squares approximation defined in \eqref{eq:lsqr} using samples on the subsampled lattice $\bm X_J$ satisfies with probability exceeding $1-3\exp(-t)$ that
    \begin{equation*}
        \ewc(S_{\mathcal B}^{\bm X_J})^2
        \le \sup_{\bm h\notin\mathcal B} \frac{1}{r(\bm h)}
        + 
        \frac{42\,(\log|J|+t)}{|J|}
\sum_{\bm h\notin\mathcal B} \frac{1}{r(\bm h)} + \frac{4}{n} \|
        \bm\Phi_{\mathcal B}\|_{2}^2 \,,
    \end{equation*}
    with respect to the function space defined in \eqref{eq:H} and $\bm\Phi_{\mathcal B} = \bm\Phi_{\{0,\dots,n-1\},\mathcal B}$ as in \eqref{eq:Phi}. Additionally we have
    \begin{equation}\label{eq:JB}
         \frac{6\,(\log|J|+t)}{|J|} 
         \le \frac{1}{|\mathcal B|} 
         \le \frac{13\,(\log|J|+t)}{|J|} \,.
    \end{equation}
\end{theorem}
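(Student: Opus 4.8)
The plan is to combine the deterministic error bound of \Cref{generalbound} with the probabilistic spectral estimates of \Cref{constructions} via a union bound, and then to verify the purely arithmetic two-sided estimate \eqref{eq:JB} separately. First I would check that the hypotheses of \Cref{constructions} are met: since $\mathcal B$ is nonempty we have $|\mathcal B|\ge1$, and with $t\ge4$ the choice $|J|=\lceil 12\,|\mathcal B|\,(\log|\mathcal B|+t)\rceil$ satisfies the oversampling condition \eqref{eq:12} as well as $|J|\ge 12\cdot1\cdot4=48\ge3$, so that both conclusions \eqref{eq:sepultura} and \eqref{eq:watain} are available (note $|J|\ge|\mathcal B|$ is immediate). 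The lower bound $\sigma_{\min}^2(\bm L_{J,\mathcal B})\ge|J|/2>0$ from \eqref{eq:sepultura} in particular forces $\bm L_{J,\mathcal B}$ to have full rank, so \Cref{generalbound} is applicable on the very same event.

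Next I would assemble the probabilities. The estimate $\sigma_{\min}^2(\bm L_{J,\mathcal B})\ge|J|/2$ fails with probability at most $\exp(-t)$, and the estimate \eqref{eq:watain} on $\|\bm\Phi_{J,\mathcal B}\|_2^2$ fails with probability at most $2^{3/4}\exp(-t)$. By a union bound both hold simultaneously with probability at least $1-(1+2^{3/4})\exp(-t)\ge1-3\exp(-t)$, where the last step uses $1+2^{3/4}<3$. On this event I would substitute the two bounds into the quotient of \Cref{generalbound},
\begin{equation*}
    \frac{\|\bm\Phi_{J,\mathcal B}\|_2^2}{\sigma_{\min}^2(\bm L_{J,\mathcal B})}
    \le \frac{2}{|J|}\Big(21\,(\log|J|+t)\sum_{\bm h\notin\mathcal B}\frac{1}{r(\bm h)} + 2\frac{|J|}{n}\|\bm\Phi_{\mathcal B}\|_2^2\Big)
    = \frac{42\,(\log|J|+t)}{|J|}\sum_{\bm h\notin\mathcal B}\frac{1}{r(\bm h)} + \frac{4}{n}\|\bm\Phi_{\mathcal B}\|_2^2 \,,
\end{equation*}
and add the leading $\sup_{\bm h\notin\mathcal B}1/r(\bm h)$ term supplied by \Cref{generalbound}. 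This reproduces the asserted error bound verbatim.

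Finally, the two-sided estimate \eqref{eq:JB} is deterministic and follows from the definition of $|J|$, that is, from $12\,|\mathcal B|(\log|\mathcal B|+t)\le|J|<12\,|\mathcal B|(\log|\mathcal B|+t)+1$. The companion lower estimate $|J|\le 13\,|\mathcal B|(\log|J|+t)$ is the easy direction: the ceiling correction is absorbed since $|\mathcal B|(\log|\mathcal B|+t)\ge1$, giving $|J|\le 13\,|\mathcal B|(\log|\mathcal B|+t)$, and then $\log|J|\ge\log|\mathcal B|$ finishes it. The main obstacle I anticipate lies in the upper estimate $6\,|\mathcal B|(\log|J|+t)\le|J|$: after dividing by $6|\mathcal B|$ and using $|J|\ge12\,|\mathcal B|(\log|\mathcal B|+t)$, this reduces to $\log|J|\le 2\log|\mathcal B|+t$. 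Since $\log|J|\approx\log12+\log|\mathcal B|+\log(\log|\mathcal B|+t)$, this is precisely where the hypothesis $t\ge4$ is needed, to absorb the additive $\log12$ and $\log(\log|\mathcal B|+t)$ corrections; the tightest case is $|\mathcal B|=1$, where $\log|\mathcal B|=0$ and one must verify $\log(12t)\le t$, which holds exactly once $t\ge4$.
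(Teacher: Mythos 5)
Your proposal is correct and takes essentially the same route as the paper's proof: the error bound is obtained exactly as in the paper by feeding the spectral estimates of \Cref{constructions} through a union bound into \Cref{generalbound}, and your reduction of the hard direction of \eqref{eq:JB} to $\log|J|\le 2\log|\mathcal B|+t$ is (up to the ceiling) precisely the paper's auxiliary inequality \eqref{eq:hard}, $12\,b\,(\log b+t)\le b^2\exp(t)$. The only soft spot is your final arithmetic sketch: to make it rigorous you must absorb the ceiling's $+1$ before taking logarithms (e.g.\ using $12\,|\mathcal B|\,(\log|\mathcal B|+t)\ge 48$), which is the role played in the paper by the monotonicity of $x\mapsto(\log x+t)/x$ combined with \eqref{eq:hard}.
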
 

\begin{proof} We use the bound of the smallest singular value of $\bm L_{J,\mathcal B}$ and the bound on the spectral norm of $\bm\Phi_{J,\mathcal B}$ of \Cref{constructions}.
    By Boole's inequality, the overall probability exceeds $1-\exp(-t)-2^{3/4}\exp(-t) \ge 1-3\exp(-t)$, which is the claimed probability.
    Plugging in the spectral bounds into \Cref{generalbound}, we obtain
    \begin{equation*}
        \ewc(S_{\mathcal B}^{\bm X_J})^2
        \le \sup_{\bm h\notin\mathcal B} \frac{1}{r(\bm h)}
        + 
        \frac{2}{|J|} \Big( 21 \,(\log|J|+t) \sum_{\bm h\notin\mathcal B} \frac{1}{r(\bm h)} + 2\frac{|J|}{n} \|\bm\Phi_{\mathcal B}\|_{2}^2\Big) \,.
    \end{equation*}
    It remains to estimate the scaling factor for the sum.

    We first show that for $b\ge 1$ and $t\ge 4$ we have
    \begin{equation}\label{eq:hard}
        12\,b\,(\log b+t)
        \le b^2\exp(t) \,.
    \end{equation}
    Since $\log b \le b-1$ this follows from the stronger assertion
    \begin{equation*}
        12\,(b-1+t)
        \le b\exp(t)
        \quad\Leftrightarrow\quad
        (\exp(t)-12)\,b +12-12t
        \ge 0 \,.
    \end{equation*}
    The left-hand side is non-decreasing in $b$ since $t\ge 4 \ge \log(12)$.
    With $b\ge 1$, it remains to check the non-negativity for $b=1$, which is satisfied for $t\ge 4$ showing~\eqref{eq:hard}.

    We are ready to prove an upper bound on $(\log j+t)/j$ in terms of $1/b$ when $j := \lceil\,12\,b\,(\log b+t)\,\rceil \ge 12\,b\,(\log b+t)$.
    Since $(\log j+t)/j$ is increasing with decreasing $j$, it suffices to show an estimate for $\ell\le j$ such that
    \begin{equation*}
        12\,b\,(\log b+t)
        \le \ell
        \le b^2\exp(t) \,.
    \end{equation*}
    Such a value of $\ell\in\mathds R$ exists due to \eqref{eq:hard}.
    Then it follows that
    \begin{equation*}
        \frac{\log j+t}{j}
        \le \frac{\log\ell+t}{\ell}
        \le \frac{2 \, (\log b + t)}{12 \, b \, (\log b + t)}
        = \frac{1}{6\,b} \,.
    \end{equation*}
    
    Finally we prove an upper bound on $1/b$ in terms of $(\log j+t)/j$.
    Since $b\ge 1$ and $t\ge 4$, we have $12 \, b \, (\log b+t) \ge \frac{48}{49} \lceil 12 \, b \, (\log b+t)\rceil$.
    Thus
    \begin{equation*}
        \frac{1}{b}
        \le \frac{49}{48} \frac{12 \, (\log b+t)}{\lceil 12 \, b \, (\log b+t)\rceil}
        \le \frac{13 \, (\log j+t)}{j} \,,
    \end{equation*}
    where we used $j > 12\,b \ge b$ for $\log b < \log j$ and $49\cdot 12/48 <13$.
    This completes the proof.
\end{proof} 

In \Cref{withPhi} the spectral norm of an infinite-dimensional matrix $\|\bm\Phi_{\mathcal B}\|_{2}^{2}$ occurs.
We have
\begin{equation*}
    \|\bm\Phi_{\mathcal B}\|_{2}^{2} =\| \bm\Phi_{\mathcal B}^\ast \, \bm\Phi_{\mathcal B} \|_{2} = \| \bm\Phi_{\mathcal B} \, \bm\Phi_{\mathcal B}^\ast \|_{2} \,,
\end{equation*}
where the first matrix product is infinite-dimensional, whereas the second matrix-product is finite-dimensional (and may be used for computing the spectral norm numerically).
In the following lemma we use the former to show that this quantity can be upper bounded in terms of the known quantity $\mathcal S_n(\bm z)$ from \eqref{eq:S}.
Note that the reconstructing property is not needed in this lemma.

\begin{lemma}\label{nokings} Let $\mathcal B\subseteq\mathds Z^d$ be a frequency index set and $\bm X = \{\frac kn \bm z\modone\}_{k=0}^{n-1}\subseteq\mathds T^d$ a lattice.
    Further let $\bm\Phi_{\mathcal B} \coloneqq \bm\Phi_{\{0,\dots,n-1\}, \mathcal B}$ be as in \eqref{eq:Phi} and $\mathcal S_n(\bm z)$ as in \eqref{eq:S}.
    Then
    \begin{equation*}
        \frac 1n \|\bm\Phi_{\mathcal B}\|_{2}^2
        \le \sup_{\bm h\notin\mathcal B} \frac{1}{r(\bm h)} + \sqrt{\mathcal S_n(\bm z)} \,.
    \end{equation*}
\end{lemma}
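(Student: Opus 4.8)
The plan is to work with the infinite-dimensional Gram matrix $\bm\Phi_{\mathcal B}^\ast\bm\Phi_{\mathcal B}$, as suggested by the identity $\|\bm\Phi_{\mathcal B}\|_2^2 = \|\bm\Phi_{\mathcal B}^\ast\bm\Phi_{\mathcal B}\|_2$ recorded just before the lemma. First I would compute its entries. For $\bm h,\bm h'\notin\mathcal B$ the $(\bm h,\bm h')$ entry equals $\tfrac{1}{\sqrt{r(\bm h)r(\bm h')}}\sum_{k=0}^{n-1}\exp(2\pi\mathrm i\tfrac kn\langle\bm h'-\bm h,\bm z\rangle)$, and the character property $\tfrac1n\sum_{k=0}^{n-1}\exp(2\pi\mathrm i m\tfrac kn)=\delta_{m\equiv_n 0}$ collapses the inner sum to $n\,\delta_{\langle\bm h'-\bm h,\bm z\rangle\equiv_n 0}$. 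Hence $\tfrac1n\bm\Phi_{\mathcal B}^\ast\bm\Phi_{\mathcal B}$ is the Hermitian matrix with entries $\tfrac{1}{\sqrt{r(\bm h)r(\bm h')}}\delta_{\langle\bm h'-\bm h,\bm z\rangle\equiv_n 0}$, and the target reduces to bounding $\big\|\tfrac1n\bm\Phi_{\mathcal B}^\ast\bm\Phi_{\mathcal B}\big\|_2$.

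Next I would split this matrix as $D+R$, where $D$ is its diagonal ($\bm h=\bm h'$) and $R$ collects the off-diagonal terms ($\bm h\neq\bm h'$ with $\langle\bm h'-\bm h,\bm z\rangle\equiv_n 0$), and apply the triangle inequality $\|D+R\|_2\le\|D\|_2+\|R\|_2$. The diagonal part is immediate: $D$ has entries $1/r(\bm h)$, so $\|D\|_2=\sup_{\bm h\notin\mathcal B}1/r(\bm h)$, which is the first term of the claimed bound.

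For the off-diagonal part I would bound the spectral norm by the Frobenius norm, $\|R\|_2\le\|R\|_F$, and then compute
\begin{equation*}
  \|R\|_F^2 = \sum_{\substack{\bm h,\bm h'\notin\mathcal B,\ \bm h\neq\bm h'\\ \langle\bm h'-\bm h,\bm z\rangle\equiv_n 0}} \frac{1}{r(\bm h)\,r(\bm h')} \le \sum_{\bm h\in\mathds Z^d}\frac{1}{r(\bm h)}\sum_{\substack{\bm\ell\in\mathds Z^d\setminus\{\bm 0\}\\ \langle\bm\ell,\bm z\rangle\equiv_n 0}}\frac{1}{r(\bm h+\bm\ell)} = \mathcal S_n(\bm z),
\end{equation*}
where the substitution $\bm\ell=\bm h'-\bm h$ turns the off-diagonal condition into $\bm\ell\neq\bm 0$, $\langle\bm\ell,\bm z\rangle\equiv_n 0$, and dropping the restrictions $\bm h,\bm h'\notin\mathcal B$ (enlarging the index set to all of $\mathds Z^d$) only increases the sum and reproduces $\mathcal S_n(\bm z)$ from \eqref{eq:S}. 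Thus $\|R\|_2\le\sqrt{\mathcal S_n(\bm z)}$, and combining with the diagonal estimate yields the assertion.

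The only delicate point is the treatment of $R$: its spectral norm cannot be read off directly, and a row-sum (Gershgorin) bound would require controlling $\tfrac{1}{\sqrt{r(\bm h)}}\sum_{\bm\ell}\tfrac{1}{\sqrt{r(\bm h+\bm\ell)}}$ uniformly in $\bm h$, which does not obviously relate to $\mathcal S_n(\bm z)$. Passing instead through the Frobenius norm is what makes $\sqrt{\mathcal S_n(\bm z)}$ appear cleanly; the single step to verify carefully is that extending the summation from $\bm h,\bm h'\notin\mathcal B$ to all $\bm h\in\mathds Z^d$ is legitimate, which it is since every summand is nonnegative, so this is the loss that accounts for the lemma being an inequality rather than an equality.
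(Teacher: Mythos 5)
Your proposal is correct and follows essentially the same route as the paper's proof: compute the Gram matrix $\tfrac1n\bm\Phi_{\mathcal B}^\ast\bm\Phi_{\mathcal B}$ via the character property, split it into diagonal and off-diagonal parts by the triangle inequality, identify the diagonal norm as $\sup_{\bm h\notin\mathcal B}1/r(\bm h)$, and bound the off-diagonal part by its Frobenius norm, enlarging the summation domain to all of $\mathds Z^d$ (legitimate by nonnegativity) to recover $\sqrt{\mathcal S_n(\bm z)}$. The only cosmetic difference is that you make the nonnegativity justification and the rejection of a Gershgorin-type bound explicit, which the paper leaves implicit.
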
 

\begin{proof} By the definition of $\bm\Phi_{\mathcal B}$, we have
    \begin{equation*}
        \frac 1n \|\bm\Phi_{\mathcal B}\|_{2}^2
        = \Big\| \frac 1n \bm\Phi_{\mathcal B}^\ast \, \bm\Phi_{\mathcal B} \Big\|_{2}
        = \Big\|\Big[ \frac 1n \sum_{k=0}^{n-1} \frac{\exp(2\pi\mathrm i\tfrac kn\langle\bm h'-\bm h,\bm z\rangle)}{\sqrt{r(\bm h) \, r(\bm h')}} \Big]_{\bm h,\bm h'\notin\mathcal B} \Big\|_{2} \,.
    \end{equation*}
    By the character property $\frac 1n \sum_{k=0}^{n-1} \exp(2\pi\mathrm i m\tfrac kn) = \delta_{m\equiv_n 0}$ this evaluates to
    \begin{equation*}
        \frac 1n \|\bm\Phi_{\mathcal B}\|_{2}^2
        = \Big\|\Big[ \frac{\delta_{\langle\bm h,\bm z\rangle \equiv_n \langle\bm h',\bm z\rangle}}{\sqrt{r(\bm h) \, r(\bm h')}} \Big]_{\bm h,\bm h'\notin\mathcal B} \Big\|_{2} \,.
    \end{equation*}
    Now we split this matrix into its diagonal and off-diagonal terms
    \begin{equation*}
        \frac 1n \|\bm\Phi_{\mathcal B}\|_{2}^2
        \le \Big\| \Diag\Big( \frac{1}{r(\bm h)}\Big)_{\bm h\notin\mathcal B} \Big\|_{2}
        + \Big\|\Big[ \frac{\delta_{\bm h\neq\bm h'} \delta_{\langle\bm h,\bm z\rangle \equiv_n \langle\bm h',\bm z\rangle}}{\sqrt{r(\bm h) \, r(\bm h')}} \Big]_{\bm h,\bm h'\notin\mathcal B} \Big\|_{2} \,.
    \end{equation*}
    The spectral norm of the diagonal matrix is known and for the off-diagonal matrix we estimate it by the Frobenius norm
    \begin{align*}
        \frac 1n \|\bm\Phi_{\mathcal B}\|_{2}^2
        &\le \sup_{\bm h\notin\mathcal B} \frac{1}{r(\bm h)}
        + \Big\|\Big[ \frac{\delta_{\bm h\neq\bm h'} \delta_{\langle\bm h,\bm z\rangle \equiv_n \langle\bm h',\bm z\rangle}}{\sqrt{r(\bm h) \, r(\bm h')}} \Big]_{\bm h,\bm h'\in\mathds Z^d} \Big\|_{F} \\
        &= \sup_{\bm h\notin\mathcal B} \frac{1}{r(\bm h)}
        + \Big( \sum_{\bm h\in\mathds Z^d} \sum_{\substack{\bm h'\in\mathds Z^d \\ \bm h'\neq\bm h}} \frac{\delta_{\langle\bm h,\bm z\rangle \equiv_n \langle\bm h',\bm z\rangle}}{r(\bm h) \, r(\bm h')} \Big)^{1/2} \\
        &= \sup_{\bm h\notin\mathcal B} \frac{1}{r(\bm h)}
        + \Big( \sum_{\bm h\in\mathds Z^d} \frac{1}{r(\bm h)} \sum_{\substack{\bm\ell\in\mathds Z^d\setminus\{\bm 0\} \\\langle\bm\ell,\bm z\rangle\equiv_n 0}} \frac{1}{r(\bm h+\bm\ell)} \Big)^{1/2} \,,
    \end{align*}
    where the second term is exactly $\sqrt{\mathcal S_n(\bm z)}$ as claimed.
\end{proof} 

Now we fix the frequency set $\mathcal B$ to be as in \Cref{classicalrecon}, where we have guaranteed reconstructing property, allowing us to omit it in the assumptions.

\begin{theorem}\label{boundwithS} Let $\bm X = \{\frac kn\bm z\modone\}_{k=0}^{n-1} \subseteq\mathds T^d$ be a lattice and $\mathcal S_n(\bm z)$ as in \eqref{eq:S}.
    Further, let $H$ be the function space defined in \eqref{eq:H} and fix
    \begin{equation*}
        \mathcal B \coloneqq \Big\{\bm h\in\mathds Z^d : r(\bm h) \,\le\,
        \frac{1}{2\sqrt{\mathcal S_n(\bm z)}} \Big\} \,.
    \end{equation*}
    For $t\ge 4$, let $J\subseteq\{0, \dots, n-1\}$ be a multiset of uniformly i.i.d.\ drawn integers with
    \begin{equation*}
|J| := \lceil 12 \, |\mathcal B| \, (\log|\mathcal B| + t)\rceil \,.
    \end{equation*}
    Then the least squares approximation defined in \eqref{eq:lsqr} using samples on the subsampled lattice $\bm X_J$ satisfies with probability exceeding $1-3\exp(-t)$ that
    \begin{equation*}
        \ewc(S_{\mathcal B}^{\bm X_J})^2
        \le 14 \sqrt{\mathcal S_n(\bm z)}
        + \frac{7}{|\mathcal B|} \sum_{\bm h\notin\mathcal B} \frac{1}{r(\bm h)} \,.
    \end{equation*}
\end{theorem}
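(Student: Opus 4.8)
The plan is to assemble the pieces already established; no genuinely new estimate is needed. First I would observe that by \Cref{classicalrecon} the set $\mathcal B = \{\bm h\in\mathds Z^d : r(\bm h)\le 1/(2\sqrt{\mathcal S_n(\bm z)})\}$ is exactly the set \eqref{eq:majesty} on which $\bm X$ is guaranteed to have the reconstructing property; this is why the reconstructing hypothesis may be dropped here and why all hypotheses of \Cref{withPhi} (reconstructing property on $\mathcal B$, $t\ge 4$, and the prescribed cardinality $|J| = \lceil 12\,|\mathcal B|\,(\log|\mathcal B|+t)\rceil$) are met. Applying \Cref{withPhi} then yields, with probability exceeding $1-3\exp(-t)$, the three-term bound
\[
    \ewc(S_{\mathcal B}^{\bm X_J})^2
    \le \sup_{\bm h\notin\mathcal B}\frac{1}{r(\bm h)}
    + \frac{42\,(\log|J|+t)}{|J|}\sum_{\bm h\notin\mathcal B}\frac{1}{r(\bm h)}
    + \frac{4}{n}\|\bm\Phi_{\mathcal B}\|_2^2 \,,
\]
together with the two-sided estimate \eqref{eq:JB} relating $(\log|J|+t)/|J|$ to $1/|\mathcal B|$.

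Next I would bound each of the three summands. For the supremum term the definition of $\mathcal B$ forces $r(\bm h) > 1/(2\sqrt{\mathcal S_n(\bm z)})$ whenever $\bm h\notin\mathcal B$, whence $\sup_{\bm h\notin\mathcal B} 1/r(\bm h) \le 2\sqrt{\mathcal S_n(\bm z)}$. For the middle term I would use the left inequality of \eqref{eq:JB}, namely $6\,(\log|J|+t)/|J| \le 1/|\mathcal B|$, writing $42\,(\log|J|+t)/|J| = 7\cdot 6\,(\log|J|+t)/|J| \le 7/|\mathcal B|$; this reproduces the second summand of the claimed bound verbatim. For the third term I would invoke \Cref{nokings}, which gives $\tfrac1n\|\bm\Phi_{\mathcal B}\|_2^2 \le \sup_{\bm h\notin\mathcal B} 1/r(\bm h) + \sqrt{\mathcal S_n(\bm z)}$, and then reuse the supremum bound to obtain $\tfrac{4}{n}\|\bm\Phi_{\mathcal B}\|_2^2 \le 4\bigl(2\sqrt{\mathcal S_n(\bm z)} + \sqrt{\mathcal S_n(\bm z)}\bigr) = 12\sqrt{\mathcal S_n(\bm z)}$.

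Finally I would collect the contributions: the supremum term contributes $2\sqrt{\mathcal S_n(\bm z)}$ and the $\bm\Phi_{\mathcal B}$ term contributes $12\sqrt{\mathcal S_n(\bm z)}$, summing to the leading constant $14$, while the middle term supplies the factor $7/|\mathcal B|$, giving
\[
    \ewc(S_{\mathcal B}^{\bm X_J})^2
    \le 14\sqrt{\mathcal S_n(\bm z)}
    + \frac{7}{|\mathcal B|}\sum_{\bm h\notin\mathcal B}\frac{1}{r(\bm h)} \,.
\]
There is no real obstacle beyond careful bookkeeping of the numerical constants; the one point to watch is that the supremum bound is used \emph{twice} (on its own, and again inside the $\bm\Phi_{\mathcal B}$ estimate through \Cref{nokings}), so the two $\sqrt{\mathcal S_n(\bm z)}$-contributions $2$ and $12$ must genuinely be added rather than one being absorbed into the other. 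The probability $1-3\exp(-t)$ is inherited directly from \Cref{withPhi}.
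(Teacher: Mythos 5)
Your proposal is correct and follows essentially the same route as the paper's own proof: establish the reconstructing property on $\mathcal B$ via \Cref{classicalrecon}, apply \Cref{withPhi} together with \eqref{eq:JB}, estimate $\tfrac1n\|\bm\Phi_{\mathcal B}\|_2^2$ by \Cref{nokings}, and use $\sup_{\bm h\notin\mathcal B}1/r(\bm h)\le 2\sqrt{\mathcal S_n(\bm z)}$ to collect the constants $2+12=14$ and $7/|\mathcal B|$. The bookkeeping, including the double use of the supremum bound, matches the paper exactly.
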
 

\begin{proof} By \Cref{classicalrecon} we know that $\bm X$ has the reconstructing property for $\mathcal B$.
    Thus, we can apply \Cref{withPhi}, where we plug in \Cref{nokings} for estimating $\|\bm\Phi_{\mathcal B}\|_{2}^2$ and obtain with the desired probability
    \begin{equation*}
        \ewc(S_{\mathcal B}^{\bm X_J})^2
        \le \sup_{\bm h\notin\mathcal B} \frac{1}{r(\bm h)}
        + \frac{7}{|\mathcal B|} \sum_{\bm h\notin\mathcal B} \frac{1}{r(\bm h)}
        + 4 \Big(
        \sup_{\bm h\notin\mathcal B} \frac{1}{r(\bm h)}
        + \sqrt{\mathcal S_n(\bm z)} \Big) \,.
    \end{equation*}
    By the definition of $\mathcal B$, we have $\sup_{\bm h\notin\mathcal B} (1/r(\bm h)) \le 2\sqrt{\mathcal S_n(\bm z)}$, which, when plugged in the above estimate, yields the assertion.
\end{proof} 

\begin{remark}\label{workstoo} Following the same proof, it is easily seen, that \Cref{boundwithS} still holds if we replace $\mathcal S_n(\bm z)$ by any upper estimate, even though it changes the definition of $\mathcal B$ and with that the underlying method.
\end{remark} 

Note that the occurrence of the quantity $(1/|\mathcal B|) \sum_{\bm h\notin\mathcal B} (1/r(\bm h))$ in \Cref{boundwithS} is natural, since there exist function spaces where this is a lower bound on the sampling numbers with $|\mathcal B|$ samples, cf.~\cite[Theorem~2]{HKNV22}.
Comparing the full lattice algorithm \Cref{classicalbound} with \Cref{boundwithS}, both bounds depend on the term $\sqrt{\mathcal S_n(\bm z)}$.
The difference is in the number of used samples being $|J|$ in the new algorithm instead of $n$, with $|J|$ usually far less than $n$, as we will see in \Cref{sec:korobov}.
 \section{Application to weighted Korobov spaces}\label{sec:korobov} 

In this section we apply the general theory to weighted Korobov spaces to obtain double the main rate of convergence for the subsampled lattice in comparison to the full lattice.
For dimension $d\in\mathds N$, smoothness $\alpha > 1/2$, and weight parameters $\bm\gamma = \{\gamma_{\mathfrak u}\}_{\mathfrak u\subseteq\mathds N}$, the \emph{weighted Korobov space} is defined by
\begin{equation}\label{eq:korobov}
    H_{d,\alpha,\bm\gamma}
    \coloneqq \Big\{
        f\colon\mathds T^d\to\mathds C :
        \|f\|_{d,\alpha,\bm\gamma}^2
        = \sum_{\bm h\in\mathds Z^d} r_{d,\alpha,\bm\gamma}(\bm h) \, |\hat f_{\bm h}|^2
        < \infty
    \Big\} \,,
\end{equation}
with
\begin{equation}\label{eq:rgamma}
    r_{d,\alpha,\bm\gamma}(\bm h)
    \coloneqq \gamma_{\Supp(\bm h)}^{-1} \prod_{j\in\Supp(\bm h)} |h_j|^{2\alpha}
    \quad\text{and}\quad
    \Supp(\bm h) \coloneqq \Big\{ j \in\{1,\dots,d\} : h_j\neq 0 \Big\} \,.
\end{equation}
Here, the weight $\gamma_{\mathfrak u}$ controls the importance of the part of the function $f$ which only depends on the variables $\bm x_{\mathfrak u} = [x_j]_{j\in\mathfrak u}$.
We fix $\gamma_{\emptyset} \coloneqq 1$ such that $\|1\|_{d,\alpha,\bm\gamma} = \|1\|_{L_2}$.
For integer $\alpha\in\mathds N$ it is known that
\begin{equation*}
    \|f\|_{d,\alpha,\bm\gamma}^2
    =\!\! \sum_{\mathfrak u\subseteq \{1, \dots, d\}}
    \!\frac{1}{(2\pi)^{\alpha|\mathfrak u|}}
    \frac{1}{\gamma_{\mathfrak u}}
    \int_{[0,1]^{|\mathfrak u|}}
    \!\Big|
    \int_{[0,1]^{d-|\mathfrak u|}}
    \!\Big(\prod_{j\in\mathfrak u} \frac{\partial}{\partial x_j}\Big)^\alpha
    f(\bm x)
    \;\mathrm d\bm x_{\{1, \dots, d\}\setminus\mathfrak u}
    \Big|^2
    \mathrm d\bm x_{\mathfrak u} .
\end{equation*}
Korobov spaces describe a wide class of functions, with various forms of weights $\bm\gamma$ used throughout literature, such as
\begin{itemize}
\item
    $\gamma_{\mathfrak u} = 1$ for all $\mathfrak u\subseteq\mathds N$: these spaces coincide with classical (unweighted) periodic Sobolev spaces with dominating mixed smoothness, see e.g., \cite{DTU18};
\item
    $\gamma_{\mathfrak u} = \prod_{j\in\mathfrak u}\gamma_j$ with a positive sequence $(\gamma_j)_{j\ge 1}$: these are called product weights, see e.g.,~\cite{Sloan1998, Woniakowski2009};
\item
    $\gamma_{\mathfrak u} = \Gamma_{|\mathfrak u|}$ for a sequence $(\Gamma_\ell)_{\ell\ge 0}$: they are called order dependent weights, see e.g.,~\cite{Dick2006};
\item
    combining product and order dependent weights yield POD weights $\gamma_{\mathfrak u} = \Gamma_{|\mathfrak u|}\prod_{j\in\mathfrak u}\gamma_j$, see e.g., \cite{Kuo2012, Graham2014, Kuo2016, Graham2018};
\item
    including the smoothness parameter yields smoothness driven product and order dependent (SPOD) weights  $\gamma_{\mathfrak u} = \sum_{\bm\nu_{\mathfrak u}\in\{1, \dots, \alpha\}^{|\mathfrak u|}} \Gamma_{\|\bm\nu_{\mathfrak u}\|_1}\prod_{j\in\mathfrak u}\gamma_{j,\nu_j}$, see e.g., \cite{DKLNS14,Dick2016,Kaarnioja2020}.
\end{itemize}

For the classical periodic Sobolev spaces of dominating mixed smoothness, it is known that the worst-case error behaves asymptotically as $n^{-\alpha}(\log n)^{\alpha(d-1)}$ when the approximation is based on $n$ measurements from arbitrary linear functionals.
This includes function evaluations as well as Fourier coefficients, see e.g., \cite{M62} or \cite[Section~4.1]{DTU18}.
It was finally shown in \cite{DKU23} that the same rate of convergence can be attained when the approximation was limited to use function evaluations.
This result is a non-constructive existence result for the points used in the approximation.

On the other hand, it was shown in \cite[Proposition~3.3]{BKUV17} that for an approximation based on a full lattice the worst-case error can be bounded from below by $n^{-\alpha/2}$.
Thus, using the full lattice halves the polynomial rate of convergence.

It was already shown in~\cite{BKPU22}, that the optimal polynomial rate can be recovered using a subset of the initial lattice given that the initial lattice is large enough.
In particular, in~\cite{BKPU22} a lattice of size $n \gtrsim |J|^{2/(1-1/(2\alpha))}$ is necessary.
We will show in~\Cref{koroboverror} and~\Cref{cor} that the optimal polynomial rate $|J|^{-\alpha+\varepsilon}$ can be recovered using only an initial lattice of size $n \lesssim\, |J|^{2/\sqrt{1-\varepsilon/\alpha}}$ for any $0<\varepsilon<\alpha$.

In order to apply our general result on subsampled lattices, we first present known results for the full lattice.
Note that the parameter $\alpha$ in \cite{KMN24} is replaced by $2\alpha$ in this paper.

\begin{theorem}[{\cite[Theorem~3.3]{KMN24}}]\label{Skorobov} Given $n\ge 2$, $d\in\mathds N$, $\alpha>1/2$ and weights $\bm\gamma = \{\gamma_{\mathfrak u}\}_{\mathfrak u\subseteq\mathds N}$, the generating vector $\bm z$ obtained from the CBC construction following \cite[Algorithm~3.2]{KMN24} satisfies \begin{equation*}
      \frac{2\zeta(2\alpha)\gamma_{\{1\}}}{n^{2\alpha}} 
      \le \mathcal S_n(\bm z)
        \le \Big( \frac{\tau_\lambda \, C_\lambda^2}{\varphi(n)}
        \Big)^{1/\lambda} 
        \quad\mbox{for all}\quad \tfrac{1}{2\alpha} < \lambda \le 1\,,
    \end{equation*}
    where $\varphi(n) = |\{z\in \{1,\ldots, n\} : \gcd(z,n)=1\}|$ is the Euler totient function (e.g., $\varphi(n) = n-1$ for $n$ prime), $\zeta(x) = \sum_{h=1}^{\infty}h^{-x}$ is the Riemann zeta function, $\tau_\lambda = 2^{4\alpha\lambda+1}+1$, and
    \begin{equation}\label{eq:C}
        C_\lambda
        = C_{d,\alpha,\bm\gamma,\lambda}
        \coloneqq \sum_{\mathfrak u\subseteq\{1,\dots,d\}} \max\{1,|\mathfrak u|\} \, \gamma_{\mathfrak u}^{\lambda} \, (2\zeta(2\alpha\lambda))^{|\mathfrak u|} \,.
    \end{equation}
\end{theorem}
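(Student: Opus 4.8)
The plan is to prove the two inequalities by entirely different means: the lower bound is elementary and holds for \emph{every} generating vector (so the CBC construction plays no role there), whereas the upper bound is the genuine content and rests on the standard component-by-component averaging argument combined with a convolution estimate for the radial weight $r_{d,\alpha,\bm\gamma}$.

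For the lower bound I would simply exhibit dual-lattice indices that are present no matter what $\bm z$ is. Writing $\bm e_1$ for the first coordinate vector, every $\bm\ell = kn\,\bm e_1$ with $k\in\mathds Z\setminus\{0\}$ satisfies $\langle\bm\ell,\bm z\rangle = kn\,z_1\equiv_n 0$, so it contributes to the inner sum defining $\mathcal S_n(\bm z)$. Keeping only the term $\bm h=\bm 0$ in the outer sum and discarding all other (non-negative) terms gives
\begin{equation*}
    \mathcal S_n(\bm z)
    \ge \frac{1}{r(\bm 0)}\sum_{k\in\mathds Z\setminus\{0\}}\frac{1}{r(kn\,\bm e_1)}
    = \sum_{k\in\mathds Z\setminus\{0\}}\frac{\gamma_{\{1\}}}{|k|^{2\alpha}\,n^{2\alpha}}
    = \frac{2\zeta(2\alpha)\,\gamma_{\{1\}}}{n^{2\alpha}} \,,
\end{equation*}
where I used $r(\bm 0)=\gamma_{\emptyset}^{-1}=1$ and $r(kn\,\bm e_1)=\gamma_{\{1\}}^{-1}(|k|n)^{2\alpha}$. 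This is exactly the claimed lower bound, and it is independent of the CBC choice.

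For the upper bound I would first reorganize $\mathcal S_n(\bm z)$ as a sum over the dual lattice. Setting $T(\bm\ell) := \sum_{\bm h\in\mathds Z^d} (r(\bm h)\,r(\bm h+\bm\ell))^{-1}$ we have $\mathcal S_n(\bm z) = \sum_{\bm\ell\ne\bm 0,\ \langle\bm\ell,\bm z\rangle\equiv_n 0} T(\bm\ell)$. Since $\tfrac{1}{2\alpha}<\lambda\le 1$, subadditivity of $x\mapsto x^{\lambda}$ (i.e.\ $(\sum a_i)^{\lambda}\le\sum a_i^{\lambda}$ for $a_i\ge0$) gives $\mathcal S_n(\bm z)^{\lambda}\le\sum_{\bm\ell\ne\bm 0,\ \langle\bm\ell,\bm z\rangle\equiv_n 0} T(\bm\ell)^{\lambda}$; note that $\lambda>\tfrac1{2\alpha}$ is precisely the condition making $\sum_{\bm h}r(\bm h)^{-\lambda}=\sum_{\mathfrak u}\gamma_{\mathfrak u}^{\lambda}(2\zeta(2\alpha\lambda))^{|\mathfrak u|}$ converge, which already explains the appearance of $\zeta(2\alpha\lambda)$ in \eqref{eq:C}. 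The next step is a convolution estimate bounding $T(\bm\ell)^{\lambda}$ by a constant multiple of a summable radial function of $\bm\ell$, obtained by splitting the $\bm h$-sum according to whether $\bm h$ or $\bm h+\bm\ell$ is the smaller argument on each active coordinate. The two product-structured factors of $r^{-1}$ produce the square $C_\lambda^2$, the possible overlap of the supports of $\bm h$ and $\bm h+\bm\ell$ produces the weighting $\max\{1,|\mathfrak u|\}$, and the near-/far-diagonal split is bookkept by $\tau_\lambda=2^{4\alpha\lambda+1}+1$. I would then run the CBC averaging argument of \cite{KMN24}: at each component the greedily chosen value does no worse than the average over admissible residues, and averaging the dual-lattice indicator $\mathds{1}[\langle\bm\ell,\bm z\rangle\equiv_n 0]$ over the $\varphi(n)$ units modulo $n$ supplies the factor $1/\varphi(n)$. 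Collecting constants yields $\mathcal S_n(\bm z)^{\lambda}\le\tau_\lambda C_\lambda^2/\varphi(n)$, and taking $\lambda$-th roots gives the stated bound uniformly in $\lambda\in(\tfrac1{2\alpha},1]$.

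The main obstacle is the convolution estimate. Because $\gamma_{\Supp(\bm h)}$ depends on the whole support set rather than factorizing over coordinates, $T(\bm\ell)$ does not separate into a clean product of one-dimensional sums; one must control how the supports of $\bm h$ and $\bm h+\bm\ell$ interact, which is exactly where the $\max\{1,|\mathfrak u|\}$ weighting and the squaring of $C_\lambda$ arise. Pinning down $\tau_\lambda$ and $C_\lambda$ so that they come out exactly as in \eqref{eq:C}, uniformly across the whole admissible range of $\lambda$, is the delicate bookkeeping; everything else (the subadditivity step, the averaging principle, and the $1/\varphi(n)$ count) is routine and follows the template of \cite{KSW06, CKNS20, KMN24}.
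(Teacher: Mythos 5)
Your lower-bound argument is exactly the paper's own: keep only the $\bm h=\bm 0$ term in the outer sum of \eqref{eq:S} and the dual-lattice indices $\bm\ell = kn\,\bm e_1$, $k\neq 0$, in the inner sum, then evaluate $\sum_{k\neq 0}\gamma_{\{1\}}|kn|^{-2\alpha} = 2\zeta(2\alpha)\gamma_{\{1\}}/n^{2\alpha}$. In fact your version is marginally cleaner than the paper's remark: since $\langle kn\,\bm e_1,\bm z\rangle = kn\,z_1\equiv_n 0$ for \emph{every} integer $z_1$, there is no need to invoke $z_1=1$ from the CBC construction, and the lower bound indeed holds for arbitrary generating vectors. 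This half of the proposal is complete and correct.

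For the upper bound the comparison is of a different nature. The paper does not prove it at all: the theorem is quoted as \cite[Theorem~3.3]{KMN24}, and the paper's proof merely records that the CBC bound was established in \cite{DKKS07} (product weights, prime $n$), in \cite{CKNS20} (general weights, prime $n$), and in \cite{KMN24} (composite $n$). Your proposal instead sketches how that external proof goes: subadditivity of $x\mapsto x^\lambda$, a convolution-type estimate for $T(\bm\ell)=\sum_{\bm h}(r(\bm h)\,r(\bm h+\bm\ell))^{-1}$, and the CBC averaging over the $\varphi(n)$ admissible residues, with the standard observation that the greedy minimizer does no worse than the average of the $\lambda$-th powers. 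The skeleton is the right one, and your identification of where $\zeta(2\alpha\lambda)$ and the factor $1/\varphi(n)$ come from is accurate. But read as a self-contained proof it has a genuine gap, which you yourself flag: the convolution estimate that actually produces $\tau_\lambda = 2^{4\alpha\lambda+1}+1$, the squared constant $C_\lambda^2$, and the $\max\{1,|\mathfrak u|\}$ weighting --- the only place where real work happens, made delicate by the fact that $\gamma_{\Supp(\bm h)}$ does not factorize over coordinates --- is asserted rather than carried out, and the averaging step for composite $n$ (handling indices $\bm\ell$ whose components share factors with $n$, which is precisely what \cite{KMN24} adds over the earlier prime-$n$ arguments) is not addressed. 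So: lower bound, correct and identical to the paper; upper bound, a faithful roadmap of the cited proof, but not a proof.
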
 

\begin{proof} The trivial lower bound is obtained by keeping only the $\bm h = \bm 0$ and $\bsell = [\ell_1,0,\ldots,0]$ terms in \eqref{eq:S} together with $z_1 = 1$.
A CBC upper bound for $\mathcal S_n(\bm z)$ was first proved in \cite{DKKS07} for product weights, in \cite{CKNS20} for general weights, both for prime~$n$, and in \cite{KMN24} for composite $n$.
\end{proof} 

Applying the estimate on $\mathcal S_n(\bm z)$ from \Cref{Skorobov} to \Cref{classicalbound} achieves the order of convergence $n^{-\alpha/2+\varepsilon}$ for the full lattice, which is half the optimal order.

\begin{lemma}[{\cite[Lemmata~5.2 and 5.3]{KMN24}}]\label{muzofm} Let $M\ge 1$, $\alpha>1/2$, and
    \begin{equation*}
        \mathcal A_{d,\alpha,\bm\gamma,M}
        \coloneqq \Big\{\bm h\in\mathds Z^d : r_{d,\alpha,\bm\gamma}(\bm h) \le M \Big\} \,.
    \end{equation*}
    Then we have for $1/(2\alpha) < \lambda < 1$
    and $C_\lambda=C_{d,\alpha,\bm\gamma,\lambda}$ from \eqref{eq:C} that
    \begin{equation*}
        (\gamma_{\{1\}} M)^{1/(2\alpha)}
        \le |\mathcal A_{d,\alpha,\bm\gamma,M}|
        \le C_\lambda M^{\lambda}
    \end{equation*}
    and
\begin{equation*}
        \frac{1}{|\mathcal A_{d,\alpha,\bm\gamma,M}|} \sum_{\bm h\notin \mathcal A_{d,\alpha,\bm\gamma,M}} \frac{1}{r_{d,\alpha,\bm\gamma}(\bm h)}
        \le
        \Big(
        \frac{ C_\lambda^{1/\lambda} }{ \gamma_{\{1\}}^{1/(2\alpha\lambda)} }
        \cdot
        \frac{\lambda }{1-\lambda}
        \Big)
        \Big(\frac 1M\Big)^{-\frac{1}{2\alpha\lambda}} \,.
    \end{equation*}
\end{lemma}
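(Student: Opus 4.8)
The plan is to derive all three estimates from the product structure of $r_{d,\alpha,\bm\gamma}$ in \eqref{eq:rgamma} together with the standard ``$\lambda$-device'': for any exponent with $2\alpha\lambda>1$ the weighted sum $\sum_{\bm h\in\mathds Z^d} r_{d,\alpha,\bm\gamma}(\bm h)^{-\lambda}$ converges and factorizes over the support. First I would group frequencies by their support $\mathfrak u=\Supp(\bm h)$ and compute
\[
    \sum_{\bm h\in\mathds Z^d} r_{d,\alpha,\bm\gamma}(\bm h)^{-\lambda}
    = \sum_{\mathfrak u\subseteq\{1,\dots,d\}} \gamma_{\mathfrak u}^{\lambda} \prod_{j\in\mathfrak u}\Big(\sum_{h\neq 0}|h|^{-2\alpha\lambda}\Big)
    = \sum_{\mathfrak u\subseteq\{1,\dots,d\}} \gamma_{\mathfrak u}^{\lambda}\,(2\zeta(2\alpha\lambda))^{|\mathfrak u|}
    \le C_\lambda \,,
\]
where convergence of each inner sum needs exactly $2\alpha\lambda>1$, and the final inequality uses $\max\{1,|\mathfrak u|\}\ge 1$ in \eqref{eq:C}.

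For the cardinality bounds I would argue as follows. The lower bound comes from discarding all but the frequencies supported on the first coordinate, $\bm h=(h_1,0,\dots,0)$, which lie in $\mathcal A_{d,\alpha,\bm\gamma,M}$ precisely when $|h_1|\le(\gamma_{\{1\}}M)^{1/(2\alpha)}$; counting these, together with $\bm h=\bm 0$ which lies in the set because $M\ge1$, yields at least $(\gamma_{\{1\}}M)^{1/(2\alpha)}$ elements. The upper bound is the $\lambda$-device applied to the indicator: for $\bm h\in\mathcal A_{d,\alpha,\bm\gamma,M}$ we have $1\le(M/r_{d,\alpha,\bm\gamma}(\bm h))^{\lambda}$, so summing over the set and then over all of $\mathds Z^d$ gives $|\mathcal A_{d,\alpha,\bm\gamma,M}|\le M^{\lambda}\sum_{\bm h} r_{d,\alpha,\bm\gamma}(\bm h)^{-\lambda}\le C_\lambda M^{\lambda}$. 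The same computation yields the layer count $\#\{\bm h:r_{d,\alpha,\bm\gamma}(\bm h)\le u\}\le C_\lambda u^{\lambda}$ for every $u>0$, which is the input I need for the tail.

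For the averaged tail I would enumerate the frequencies as $r_1\le r_2\le\cdots$ by increasing value of $r_{d,\alpha,\bm\gamma}$. The layer count forces $k\le C_\lambda r_k^{\lambda}$, hence $1/r_k\le(C_\lambda/k)^{1/\lambda}$. Writing $N:=|\mathcal A_{d,\alpha,\bm\gamma,M}|$, the complement consists of the indices $k>N$, so
\[
    \sum_{\bm h\notin\mathcal A_{d,\alpha,\bm\gamma,M}}\frac{1}{r_{d,\alpha,\bm\gamma}(\bm h)}
    = \sum_{k>N}\frac{1}{r_k}
    \le C_\lambda^{1/\lambda}\sum_{k>N}k^{-1/\lambda}
    \le C_\lambda^{1/\lambda}\int_N^\infty x^{-1/\lambda}\,\mathrm dx
    = \frac{\lambda}{1-\lambda}\,C_\lambda^{1/\lambda}\,N^{1-1/\lambda} \,,
\]
where the integral converges and produces the constant $\lambda/(1-\lambda)$ precisely because $1/\lambda>1$. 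Dividing by $N$ and inserting the cardinality lower bound $N\ge(\gamma_{\{1\}}M)^{1/(2\alpha)}$ into $N^{-1/\lambda}$ converts $N$ back into $M$ and gives the constant $\frac{\lambda}{1-\lambda}C_\lambda^{1/\lambda}\gamma_{\{1\}}^{-1/(2\alpha\lambda)}$ times the decaying factor $(1/M)^{1/(2\alpha\lambda)}$ (my computation yields a positive exponent $+1/(2\alpha\lambda)$ on $1/M$, so the printed $-1/(2\alpha\lambda)$ appears to be a sign slip).

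The steps are routine once the $\lambda$-device is set up; the only genuinely delicate points are (i) tracking the admissible range $1/(2\alpha)<\lambda<1$, since the estimate degenerates both as $\lambda\downarrow 1/(2\alpha)$, where $\zeta(2\alpha\lambda)\to\infty$ and $C_\lambda$ blows up, and as $\lambda\uparrow 1$, where $\lambda/(1-\lambda)\to\infty$; and (ii) the bookkeeping that produces the exact powers $C_\lambda^{1/\lambda}$ and $\gamma_{\{1\}}^{-1/(2\alpha\lambda)}$. This is why I would route the tail through the ordered values $r_k$ rather than through the one-line split $r^{-1}\le M^{-(1-\lambda)}r^{-\lambda}$, which converges to the right power of $M$ but only delivers the cruder constant $C_\lambda/(1-\lambda)$ and does not reproduce the $1/\lambda$ exponents of the stated form.
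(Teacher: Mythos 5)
Your proof is correct. The paper itself contains no proof of this lemma---it is imported from \cite[Lemmata~5.2 and 5.3]{KMN24} (with $C_\lambda$ enlarged by the harmless factor $\max\{1,|\mathfrak u|\}$, as the authors remark below the statement)---and your reconstruction is exactly the standard argument behind those cited results: factorizing $\sum_{\bm h\in\mathds Z^d} r_{d,\alpha,\bm\gamma}(\bm h)^{-\lambda}$ over supports (convergent precisely for $\lambda>1/(2\alpha)$), counting the univariate frequencies plus the origin for the lower cardinality bound, the $\lambda$-trick for the upper one, and routing the tail through the ordered values $r_k$ via $1/r_k\le (C_\lambda/k)^{1/\lambda}$ and an integral comparison, which is legitimate since $|\mathcal A_{d,\alpha,\bm\gamma,M}|\ge 1$ because $\bm 0$ lies in the set for $M\ge 1$; this reproduces the exact constant $\frac{\lambda}{1-\lambda}\,C_\lambda^{1/\lambda}\,\gamma_{\{1\}}^{-1/(2\alpha\lambda)}$. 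Your diagnosis of the sign slip is also right: the exponent on $1/M$ should be $+\tfrac{1}{2\alpha\lambda}$, which is precisely the form in which the bound is invoked in the proof of \Cref{koroboverror}.
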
 

In the original \cite[Lemma~5.2]{KMN24} the constant $C_\lambda$ equals $\sum_{\mathfrak u\subseteq\{1,\dots,d\}} \gamma_{\mathfrak u}^{\lambda} \, (2\zeta(2\alpha\lambda))^{|\mathfrak u|}$.
Here we used an upper estimate \eqref{eq:C} to simplify our presentation.

We now present the main result of this section which bounds the worst-case error for a subsampled lattice achieving optimal polynomial rate using the smallest initial lattice size.

\begin{theorem}\label{koroboverror} Let $H$ be the Korobov function space defined in \eqref{eq:korobov} with $\alpha>1/2$ and weights $\bsgamma = (\bsgamma_\setu)_{j\in\setu}$.
    Let $\bm X = \{\frac kn\bm z\modone\}_{k=0}^{n-1} \subseteq\mathds T^d$ be a lattice with $n\ge 7$
    sufficiently large so that $\mathcal S_n(\bsz) = \mathcal S_{d,\alpha,\bsgamma,n}(\bsz)\le 1/4$, and let
    \begin{equation*}
        \mathcal B \coloneqq \mathcal B_{d,\alpha,\bsgamma,n}
        \coloneqq \Big\{ \bm h\in\mathds Z^d : r_{d,\alpha,\bm\gamma}(\bm h) \le
        \frac{1}{2\sqrt{\mathcal S_n(\bm z)}}
        \Big\} \,.
    \end{equation*}
    For $t\ge 4$, let $J\subseteq\{0, \dots, n-1\}$ be a multiset of uniformly i.i.d.\ drawn integers with
    \begin{equation*}
        |J|
        \coloneqq \lceil 12\,|\mathcal B|\,(\log|\mathcal B| + t) \rceil \,.
    \end{equation*}
    Then the least squares approximation defined in \eqref{eq:lsqr} using samples on the subsampled lattice $\bm X_J$ satisfies with probability exceeding $1-3\exp(-t)$ that
    \begin{equation} \label{eq:finalE}
        \ewc(S_{\mathcal B}^{\bm X_J})^2
        \le 7\Big(1+
        \frac{ C_\lambda^{1/\lambda} }{ \gamma_{\{1\}}^{1/(2\alpha\lambda)} }
        \cdot
        \frac{\lambda }{1-\lambda}
        \Big)\,(13\,C_\lambda)^{\frac{1}{2\alpha\lambda^2}}\,
        \Big(\frac{\log|J|+t}{|J|}\Big)^{\frac{1}{2\alpha\lambda^2}} 
    \end{equation}
    for all $1/(2\alpha) < \lambda < 1$, with $C_\lambda=C_{d,\alpha,\bm\gamma,\lambda}$ from \eqref{eq:C}.
    Moreover, if $\bm z$ is constructed according to \cite[Algorithm~3.2]{KMN24}, we have (with $\varphi(n)$ again denoting the Euler totient function)
    \begin{equation} \label{eq:finalJ}
        (\varphi(n))^{\frac{1}{4\alpha\lambda}}\,\log\varphi(n)
        \lesssim |J|
        \lesssim n^{\alpha\lambda} \log n \,,
    \end{equation}
    and hence
    \begin{equation} \label{eq:finalN}
        \Big(\frac{|J|}{\log|J|}\Big)^{\frac{1}{\alpha\lambda}}
        \lesssim n \quad\mbox{and}\quad
        \varphi(n)
        \lesssim \Big(\frac{|J|}{\log|J|}\Big)^{4\alpha\lambda} \,.
    \end{equation}
\end{theorem}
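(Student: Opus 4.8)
The plan is to feed the abstract bound of \Cref{boundwithS} with the explicit Korobov estimates of \Cref{muzofm} (the counting/tail lemma) and \Cref{Skorobov} (the CBC bound on $\mathcal S_n(\bm z)$). First I would record that the hypothesis $\mathcal S_n(\bm z)\le 1/4$ forces $M\coloneqq\frac{1}{2\sqrt{\mathcal S_n(\bm z)}}\ge 1$, so $\mathcal B=\mathcal A_{d,\alpha,\bm\gamma,M}$ is exactly the level set to which \Cref{muzofm} applies, and the reconstructing property required everywhere is supplied by \Cref{classicalrecon}. Since the choices of $\mathcal B$ and of $|J|=\lceil 12|\mathcal B|(\log|\mathcal B|+t)\rceil$ match those of \Cref{boundwithS} verbatim, that theorem gives, with probability exceeding $1-3\exp(-t)$,
\[ \ewc(S_{\mathcal B}^{\bm X_J})^2\le 14\sqrt{\mathcal S_n(\bm z)}+\frac{7}{|\mathcal B|}\sum_{\bm h\notin\mathcal B}\frac{1}{r_{d,\alpha,\bm\gamma}(\bm h)}. \]

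To reach \eqref{eq:finalE} I would bring both summands onto the single scale $(1/M)^{1/(2\alpha\lambda)}=(2\sqrt{\mathcal S_n(\bm z)})^{1/(2\alpha\lambda)}$. For the first summand, $14\sqrt{\mathcal S_n(\bm z)}=7/M$, and since $1/M\le 1$ while $\frac{1}{2\alpha\lambda}\le 1$ (because $\lambda>\frac{1}{2\alpha}$), monotonicity of $x\mapsto x^c$ on $[0,1]$ yields $1/M\le(1/M)^{1/(2\alpha\lambda)}$. For the second summand I would substitute $M=\frac{1}{2\sqrt{\mathcal S_n(\bm z)}}$ into the tail estimate of \Cref{muzofm}, bounding it by $\frac{C_\lambda^{1/\lambda}}{\gamma_{\{1\}}^{1/(2\alpha\lambda)}}\cdot\frac{\lambda}{1-\lambda}\,(1/M)^{1/(2\alpha\lambda)}$. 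Factoring out $(1/M)^{1/(2\alpha\lambda)}$ then produces the constant $7\bigl(1+\frac{C_\lambda^{1/\lambda}}{\gamma_{\{1\}}^{1/(2\alpha\lambda)}}\frac{\lambda}{1-\lambda}\bigr)$ of \eqref{eq:finalE}. Finally I would convert $(1/M)^{1/(2\alpha\lambda)}$ into the stated power of $\frac{\log|J|+t}{|J|}$: the counting bound $|\mathcal B|\le C_\lambda M^\lambda$ of \Cref{muzofm} gives $1/M\le(C_\lambda/|\mathcal B|)^{1/\lambda}$, and the right-hand inequality of \eqref{eq:JB}, $1/|\mathcal B|\le\frac{13(\log|J|+t)}{|J|}$, upgrades this to $1/M\le(13C_\lambda)^{1/\lambda}\bigl(\frac{\log|J|+t}{|J|}\bigr)^{1/\lambda}$; raising to the power $\frac{1}{2\alpha\lambda}$ gives exactly $(13C_\lambda)^{1/(2\alpha\lambda^2)}\bigl(\frac{\log|J|+t}{|J|}\bigr)^{1/(2\alpha\lambda^2)}$, which closes \eqref{eq:finalE}.

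For \eqref{eq:finalJ} I would invoke \Cref{Skorobov}, now legitimate because $\bm z$ is the CBC vector, to sandwich $M$ and hence $|\mathcal B|$. The lower bound $\mathcal S_n(\bm z)\ge 2\zeta(2\alpha)\gamma_{\{1\}}n^{-2\alpha}$ gives $M\lesssim n^\alpha$, whence $|\mathcal B|\le C_\lambda M^\lambda\lesssim n^{\alpha\lambda}$; the upper bound $\mathcal S_n(\bm z)\le(\tau_\lambda C_\lambda^2/\varphi(n))^{1/\lambda}$ gives $M\gtrsim\varphi(n)^{1/(2\lambda)}$, whence $|\mathcal B|\ge(\gamma_{\{1\}}M)^{1/(2\alpha)}\gtrsim\varphi(n)^{1/(4\alpha\lambda)}$. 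Inserting these into $|J|=\lceil 12|\mathcal B|(\log|\mathcal B|+t)\rceil$ and using monotonicity of $x\mapsto x\log x$ yields $\varphi(n)^{1/(4\alpha\lambda)}\log\varphi(n)\lesssim|J|\lesssim n^{\alpha\lambda}\log n$, i.e.\ \eqref{eq:finalJ}. Then \eqref{eq:finalN} is the inversion of \eqref{eq:finalJ}: solving $|J|\lesssim n^{\alpha\lambda}\log n$ for $n$ and $|J|\gtrsim\varphi(n)^{1/(4\alpha\lambda)}\log\varphi(n)$ for $\varphi(n)$, after dividing through by $\log|J|$.

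Most of this is exponent bookkeeping; two points need care. The first is the device of \emph{lowering} the exponent on the benign summand $7/M$ (via $1/M\le 1$) so that both summands collapse onto the common scale $(1/M)^{1/(2\alpha\lambda)}$ — without it the two terms carry different powers of $\frac{\log|J|+t}{|J|}$ and do not combine into the single factor of \eqref{eq:finalE}. The second, which I expect to be the genuine (if minor) obstacle, is the handling of the logarithms in \eqref{eq:finalN}: dividing by $\log|J|$ produces the clean powers only once one knows $\log|J|\sim\log n\sim\log\varphi(n)$, and this in turn rests on $|J|$, $n$ and $\varphi(n)$ being polynomially comparable (in particular $\varphi(n)$ not being anomalously small), so that all three logarithms are absorbed into the implied constants of $\lesssim$.
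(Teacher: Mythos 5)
Your proposal is correct and follows essentially the same route as the paper's own proof: apply \Cref{boundwithS} with $M=1/(2\sqrt{\mathcal S_n(\bm z)})\ge 1$, use \Cref{muzofm} for both the tail sum and the two-sided bound on $|\mathcal B|$, collapse the term $7/M$ onto the common scale $(1/M)^{1/(2\alpha\lambda)}$ via $1/M\le 1$ and $1/(2\alpha\lambda)<1$, convert to $(\log|J|+t)/|J|$ through \eqref{eq:JB}, and finally sandwich $|\mathcal B|$ (hence $|J|$) between powers of $\varphi(n)$ and $n$ using \Cref{Skorobov}. Your closing remark on absorbing $\log|J|\sim\log n\sim\log\varphi(n)$ into the implied constants is a detail the paper leaves implicit, but it is consistent with its argument rather than a deviation from it.
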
 

\begin{proof} For the index set $\calB$ with radius $M := 1/(2\sqrt{\mathcal S_n(\bsz)})\ge 1$, \Cref{muzofm} gives for all $1/(2\alpha) < \lambda < 1$,
    \begin{equation*}
        (\gamma_{\{1\}}\,M)^{1/(2\alpha)} \le |\calB| \le C_\lambda\,M^\lambda
      \quad\Leftrightarrow\quad
      \frac{\gamma_{\{1\}}}{|\calB|^{2\alpha} }
      \le \frac{1}{M} 
      \le \Big(\frac{C_\lambda}{|\calB|}\Big)^{1/\lambda}\,.
    \end{equation*}
    \Cref{boundwithS} and \Cref{muzofm} then yield
    \begin{align*}
        \ewc(S_{\mathcal B}^{\bm X_J})^2
        &\le \frac{7}{M} 
        +7\Big(
        \frac{ C_\lambda^{1/\lambda} }{ \gamma_{\{1\}}^{1/(2\alpha\lambda)} }
        \cdot
        \frac{\lambda }{1-\lambda}
        \Big)
        \Big(\frac{1}{M}\Big)^{\frac{1}{2\alpha\lambda}} \\
        &\le 7\Big(1+
        \frac{ C_\lambda^{1/\lambda} }{ \gamma_{\{1\}}^{1/(2\alpha\lambda)} }
        \cdot
        \frac{\lambda }{1-\lambda}
        \Big)
        \Big(\frac{C_\lambda}{|\calB|}\Big)^{\frac{1}{2\alpha\lambda^2}}\,,
    \end{align*}
    from which the bound \eqref{eq:finalE} is obtained using the upper bound on $1/|\calB|$ in \eqref{eq:JB}.

    Next we relate $|\calB|$ to the bounds on $M = 1/(2\sqrt{\mathcal S_n(\bm z)})$ from \Cref{Skorobov}, to obtain
    \begin{align*}
        \Big(\frac{\varphi(n)}{\tau_\lambda\, C_\lambda^2}\Big)^{\frac{1}{4\alpha\lambda}}\Big(\frac{\gamma_{\{1\}}}{2}\Big)^{\frac{1}{2\alpha}}
        \le \Big(\frac{\gamma_{\{1\}}}{2\sqrt{\mathcal S_n(\bm z)}}\Big)^{\frac{1}{2\alpha}}
        \le |\mathcal B|
        &\le C_\lambda\Big(\frac{1}{2\sqrt{\mathcal S_n(\bm z)}}\Big)^\lambda \\
        &\le \frac{C_\lambda\,n^{\alpha\lambda}}{(8\zeta(2\alpha)\gamma_{\{1\}})^{\lambda/2}} \,.
    \end{align*}
    Since $12\,|\mathcal B|\,(\log|\mathcal B|+t) \le |J| \le 13\,|\mathcal B|\,(\log|\mathcal B| + t)$, we obtain \eqref{eq:finalJ} and \eqref{eq:finalN}.
\end{proof} 

Note that for small $n$ it is possible for the theoretically defined value of $|J|$ to be bigger than $n$. Additionally, the constants in the asymptotic bounds \eqref{eq:finalJ} and \eqref{eq:finalN} are quite loose. Later in our numerical experiments we will choose instead $|J| = \lceil\sqrt{n}\,\log(n)\rceil$.

The following Corollary shows that the asymptotic rates in \Cref{koroboverror} are optimal.

\begin{corollary}\label{cor} Under the setting of \Cref{koroboverror} with prime $n$, we have the asymptotic rates
    \begin{equation}
    \label{eq:err_no_log}
        \ewc(S_{\mathcal B}^{\bm X_J})
        \lesssim |J|^{-\alpha+\varepsilon}
        \quad\text{with}\quad
        |J|^{2\sqrt{1-\varepsilon/\alpha}}
        \lesssim
        n
        \lesssim
        |J|^{2/\sqrt{1-\varepsilon/\alpha}} \end{equation}
    for all $0<\varepsilon < \alpha$.
    This is optimal in that any method utilizing $|J|$ information cannot have a better rate than $|J|^{-\alpha}$, and the rate given implies an initial lattice size of $n\gtrsim |J|^{2-2\varepsilon/\alpha}$.
\end{corollary} 

\begin{proof} Rall that \eqref{eq:finalE} and \eqref{eq:finalN} in \Cref{koroboverror} hold for all parameter $\lambda$ satisfying $1/(2\alpha) < \lambda < 1$. We reparametrize in terms of arbitrary $0 <\varepsilon < \alpha$ by choosing $\lambda$ such that 
\[
  \alpha - \frac{1}{4\alpha\lambda^2} < \varepsilon,
\]
which is possible since $\lambda>1/(2\alpha)$ implies that the left-hand side can be made arbitrarily small. Thus from \eqref{eq:finalE} we have{}
\begin{equation*}
        \ewc(S_{\mathcal B}^{\bm X_J})
        \lesssim \Big(\frac{\log|J|}{|J|}\Big)^{\frac{1}{4\alpha\lambda^2}}
        = \Big(\frac{\log|J|}{|J|}\Big)^{\alpha - (\alpha-\frac{1}{4\alpha\lambda^2})}
        \lesssim|J|^{-\alpha+\varepsilon} \,.
    \end{equation*}
Moreover, from \eqref{eq:finalN} we have
    \begin{equation*}
        n \gtrsim \Big(\frac{|J|}{\log|J|}\Big)^{\frac{1}{\alpha\lambda}}
= \Big(\frac{|J|}{\log|J|}\Big)^{2\sqrt{1 - (\alpha-\frac{1}{4\alpha\lambda^2})/\alpha}}
         \gtrsim |J|^{2\sqrt{1-\varepsilon/\alpha}}
    \end{equation*}
    and, using $\varphi(n) = n -1$ for $n$ prime,
    \begin{equation*}
        n \lesssim \varphi(n) \lesssim \Big(\frac{|J|}{\log|J|}\Big)^{4\alpha\lambda}
= \Big(\frac{|J|}{\log|J|}\Big)^{\frac{2}{\sqrt{1-(\alpha-\frac{1}{4\alpha\lambda^2})/\alpha}}}
        \lesssim |J|^{\frac{2}{\sqrt{1-\varepsilon/\alpha}}}.
    \end{equation*}
    This proves the assertion \eqref{eq:err_no_log}.

    The optimality of the worst-case error follows from bounds on the linear width in \cite[Theorem~4.45]{DTU18}.
    The optimality for the initial lattice size follows from \cite{BKUV17}, since an algorithm using a subsampled lattice is inherently still using the lattice of size~$n$, and achieving the rate~$|J|^{-\alpha+\varepsilon}$ implies $n^{-\alpha/2} \lesssim |J|^{-\alpha+\varepsilon}$, yielding the lower bound.
\end{proof} 

We remark that the $\varepsilon$ in \Cref{cor} incorporates both the gap that naturally arises when approximating in Korobov spaces and the additional logarithmic gap stemming from the random nature of our method.
In contrast, \Cref{koroboverror} clearly distinguishes between these two effects.
 \section{Implementation and computational cost}\label{sec:implementation} 

The goal of subsampling from a lattice is to make use of the underlying structure, where fast and memory-efficient algorithms are applicable.
Without that structure uniformly generated random points would suffice for a similar error bound, cf.\ \cite{KU20, KUV21}.

Both the least squares approximation and the kernel method involve solving a system of equations.
For the full lattice closed form solutions are known, but for the subsampled lattice it seems that an analytical inverse is not available.
Direct solvers would have a cubic runtime with respect to the number of points $|J|$.
Instead, we will apply iterative methods: the least squares (\texttt{LSQR}) \cite{PS82} and the conjugate gradient (\texttt{CG}) \cite{Gre97} methods, where only matrix-vector products are needed.
The following result bounds the number of necessary iterations depending on the desired accuracy.

\begin{theorem}[{\cite[Theorem~3.1.1]{Gre97}}]\label{iterations} Let $\bm A$ be a Hermitian positive definite matrix with condition number $\kappa_2(\bm A) = \lambda_{\max}(\bm A)/\lambda_{\min}(\bm A)$.
    Further, let $\bm x^{(i)}$ be the $i$-th iterate of the conjugate gradient (\texttt{CG}) method applied to the system of equations $\bm A\bm{\tilde x} = \bm b$.
    Then it holds
    \begin{equation*}
        \frac{\|\bm x^{(i)} - \bm{\tilde x}\|_2}{\sqrt{\kappa_2(\bm A)}\|\bm{\tilde x}\|_2}
        \le \! \sqrt{\frac{(\bm x^{(i)}-\bm{\tilde x})^\ast \bm A \, (\bm x^{(i)}-\bm{\tilde x})}{(\bm{\tilde x})^\ast \bm A \, \bm{\tilde x}}}
        \!\le 2 \Big(\frac{\sqrt{\kappa_2(\bm A)} - 1}{\sqrt{\kappa_2(\bm A)} + 1}\Big)^i
        \!\le 2 \exp\Big(\!\frac{-2i}{\sqrt{\kappa_2(\bm A)}}\Big) .
    \end{equation*}
\end{theorem}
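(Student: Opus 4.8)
The plan is to reduce the entire chain of inequalities to the energy-norm optimality of the conjugate gradient method together with the classical Chebyshev minimax estimate. Throughout I would write $\bm e^{(i)} \coloneqq \bm x^{(i)} - \bm{\tilde x}$ for the error and $\|\bm v\|_{\bm A}^2 \coloneqq \bm v^\ast \bm A\,\bm v$ for the energy norm, which is a genuine norm because $\bm A$ is Hermitian positive definite. Starting from the standard initialisation $\bm x^{(0)} = \bm 0$ gives $\bm e^{(0)} = -\bm{\tilde x}$, so $\|\bm e^{(0)}\|_{\bm A}^2 = (\bm{\tilde x})^\ast \bm A\,\bm{\tilde x}$, and the middle expression in the claim is precisely the relative energy-norm error $\|\bm e^{(i)}\|_{\bm A}/\|\bm e^{(0)}\|_{\bm A}$.

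First I would dispatch the leftmost inequality, which is purely linear-algebraic. The Rayleigh-quotient bounds $\lambda_{\min}(\bm A)\,\|\bm v\|_2^2 \le \|\bm v\|_{\bm A}^2 \le \lambda_{\max}(\bm A)\,\|\bm v\|_2^2$, applied with $\bm v = \bm e^{(i)}$ in the lower estimate and $\bm v = \bm{\tilde x}$ in the upper estimate, give
\[
\frac{\|\bm e^{(i)}\|_{\bm A}^2}{\|\bm{\tilde x}\|_{\bm A}^2} \ge \frac{\lambda_{\min}(\bm A)}{\lambda_{\max}(\bm A)}\,\frac{\|\bm e^{(i)}\|_2^2}{\|\bm{\tilde x}\|_2^2} = \frac{1}{\kappa_2(\bm A)}\,\frac{\|\bm e^{(i)}\|_2^2}{\|\bm{\tilde x}\|_2^2},
\]
and taking square roots yields the left inequality.

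The heart of the argument is the middle inequality. Here I would invoke the defining property of CG, namely that $\bm x^{(i)}$ minimises the energy norm of the error over the affine Krylov space $\bm x^{(0)} + \mathcal K_i(\bm A, \bm r^{(0)})$; since $\bm r^{(0)} = -\bm A\,\bm e^{(0)}$, every admissible error has the form $p(\bm A)\,\bm e^{(0)}$ with $p$ a polynomial of degree at most $i$ satisfying $p(0) = 1$, so that
\[
\|\bm e^{(i)}\|_{\bm A} = \min_{\substack{\deg p \le i \\ p(0)=1}} \|p(\bm A)\,\bm e^{(0)}\|_{\bm A}.
\]
Diagonalising $\bm A = \bm U\,\Diag(\lambda_j)\,\bm U^\ast$ and expanding $\bm e^{(0)}$ in the eigenbasis bounds the right-hand side by $\big(\min_p \max_j |p(\lambda_j)|\big)\,\|\bm e^{(0)}\|_{\bm A}$, and enlarging the finite spectrum to the interval $[\lambda_{\min}, \lambda_{\max}]$ produces the scalar minimax problem whose solution is the shifted and scaled Chebyshev polynomial, with optimal value $1/T_i\big(\tfrac{\lambda_{\max}+\lambda_{\min}}{\lambda_{\max}-\lambda_{\min}}\big) = 1/T_i\big(\tfrac{\kappa_2(\bm A)+1}{\kappa_2(\bm A)-1}\big)$. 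The main obstacle is the Chebyshev tail estimate: from $T_i(z) \ge \tfrac12\,(z+\sqrt{z^2-1})^i$ for $z \ge 1$ and the simplification $z + \sqrt{z^2-1} = \tfrac{\sqrt{\kappa_2(\bm A)}+1}{\sqrt{\kappa_2(\bm A)}-1}$ at $z = \tfrac{\kappa_2(\bm A)+1}{\kappa_2(\bm A)-1}$ (using $z^2 - 1 = 4\kappa_2(\bm A)/(\kappa_2(\bm A)-1)^2$), one reads off the claimed bound $2\big(\tfrac{\sqrt{\kappa_2(\bm A)}-1}{\sqrt{\kappa_2(\bm A)}+1}\big)^i$.

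Finally the right inequality is elementary. Setting $s \coloneqq \sqrt{\kappa_2(\bm A)} \ge 1$ and $u \coloneqq 1/s \in (0,1]$, it suffices to prove $\tfrac{s-1}{s+1} \le \exp(-2/s)$, equivalently $\log\tfrac{1+u}{1-u} \ge 2u$, which follows immediately from the series $\log\tfrac{1+u}{1-u} = 2\sum_{k\ge 0} u^{2k+1}/(2k+1) \ge 2u$. Raising to the $i$-th power and multiplying by $2$ closes the chain of inequalities.
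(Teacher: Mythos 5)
Your proof is correct, and it differs from the paper's own proof in one essential respect: how much is actually proved. The paper only proves the two outer inequalities --- the leftmost from the spectral bounds $\lambda_{\min}(\bm B)\|\bm x\|_2 \le \|\bm B\bm x\|_2 \le \lambda_{\max}(\bm B)\|\bm x\|_2$ for Hermitian positive definite $\bm B$ (applied with $\bm B = \bm A^{1/2}$, equivalent to your Rayleigh-quotient argument), and the rightmost by a calculus argument: setting $g(x) = \log\bigl(\tfrac{x-1}{x+1}\bigr) + \tfrac 2x$, it notes $g \to -\infty$ as $x \to 1^+$, $g'(x) > 0$ for $x > 1$, and $\lim_{x\to\infty} g(x) = 0$, hence $g \le 0$. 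The middle inequality --- the actual CG convergence estimate --- is simply quoted as the statement of Greenbaum's Theorem~3.1.1, whereas you reprove it from scratch via the energy-norm optimality of CG over the affine Krylov space, the reduction to the polynomial minimax problem with $p(0)=1$, and the Chebyshev tail bound $T_i(z) \ge \tfrac12 (z+\sqrt{z^2-1})^i$ together with the identity $z + \sqrt{z^2-1} = (\sqrt{\kappa_2(\bm A)}+1)/(\sqrt{\kappa_2(\bm A)}-1)$ at $z = (\kappa_2(\bm A)+1)/(\kappa_2(\bm A)-1)$; this is the standard textbook argument and you carry it out correctly. Your power-series proof of $\log\tfrac{1+u}{1-u} \ge 2u$ for the last inequality is a clean alternative to the paper's monotonicity argument; both are fine. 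One point genuinely in your favour: since the theorem's middle term has $(\bm{\tilde x})^\ast \bm A\,\bm{\tilde x}$ in the denominator rather than the initial error, the statement implicitly requires $\bm x^{(0)} = \bm 0$ (so that $\bm e^{(0)} = -\bm{\tilde x}$); you make this hypothesis explicit, while the paper leaves it tacit by deferring the middle inequality to the citation.
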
 

\begin{proof} The first inequality follows from $\lambda_{\min}(\bm B)\|\bm x\|_2 \le \|\bm B\bm x\|_2 \le \lambda_{\max}(\bm B)\|\bm x\|_2$ for any Hermitian positive definite matrix $\bm B$ and vector $\bm x$.
    The second inequality is the statement of \cite[Theorem~3.1.1]{Gre97}.
    For the last inequality, we need that for $x\ge 1$
    \begin{equation*}
        \frac{x-1}{x+1} \le \exp\Big(\!-\frac 2x\,\Big) \,,
    \end{equation*}
    or equivalently
    \begin{equation*}
        g(x) \coloneqq \log \Big(\frac{x-1}{x+1}\Big) + \frac 2x \le 0 \,.
    \end{equation*}
    For $x$ approaching $1$ the statement is true.
    For $x>1$, we have $g'(x) > 0$.
    Together with $\lim_{x\to\infty}g(x) = 0$ this gives the assertion.
\end{proof} 

The \texttt{LSQR} method is analytically equivalent to applying \texttt{CG} to the normal equation but in a numerically more stable manner, cf.\ \cite{PS82}.
Thus the iteration bound in \Cref{iterations} holds for \texttt{LSQR} with the minimal and maximal eigenvalue replaced by the squared singular numbers.

\subsection{Fast least squares approximation with subsampled lattices}\label{sec:ilsqr}

In this section we discuss how to compute the least squares approximation for a subsampled lattice making use of the Fast Fourier Transform (\texttt{FFT}).
Let $\mathcal B \coloneqq \{\bm h\in\mathds Z^d : r(\bm h)\le M\}$ and
\begin{equation*}
    S_{\mathcal B}^{\bm X_J}f
    = \sum_{\bm h\in\mathcal B}\hat g_{\bm h} \exp(2\pi\mathrm i\langle\bm h,\cdot\rangle)
\end{equation*}
be defined in \eqref{eq:lsqr}.
The approximation is computed once the Fourier coefficients $\bm{\hat g}_{\mathcal B} = (\hat g_{\bm h})_{\bm h\in B}$ representing the approximation are obtained.
Essential is the set of frequencies $\mathcal B$.
Because of the downward-closed structure, this can be set up incrementally in dimension.
For the step from dimension $j$ to $j+1$, we have frequencies supported in the first $j$ dimensions and for every one of them we seek all $h_{j+1}$ such that $r_{d,\alpha,\bm\gamma}([h_1, \dots, h_j, h_{j+1},$ $ 0, \dots, 0]) \le M$.
This has a computational cost of $\mathcal O(d\,|\mathcal B|)$.
We then compute the coefficients by solving the system of equations in \Cref{lsqrcoeffs} via \texttt{LSQR}.

By \eqref{eq:sepultura} in \Cref{constructions}, we have, with $|J|$ sufficiently large and high probability, that $\kappa_2(\bm L_{J,\mathcal B}^{\ast} \, \bm L_{J,\mathcal B}) = \lambda_{\max}(\bm L_{J,\mathcal B}^{\ast} \, \bm L_{J,\mathcal B})/\lambda_{\min}(\bm L_{J,\mathcal B}^{\ast} \, \bm L_{J,\mathcal B}) \le 3$.
Fixing the desired accuracy to machine precision $\texttt{eps} = 10^{-16}$, we obtain by \Cref{iterations} a fixed number of iterations for \texttt{LSQR}.
Each iteration uses one matrix-vector product with $\bm L_{J,\mathcal B}$ and one with $\bm L_{J,\mathcal B}^{\ast}$.
For the full lattice $J = \{1,\dots,n\}$ this matrix-vector product simplifies to a one-dimensional \texttt{FFT} with a computational cost of
\begin{equation*}
    \mathcal O(n\log n + |\mathcal B|) \text{ flops and } \mathcal O(n) \text{ memory requirements} \,,
\end{equation*}
which is known as the Lattice Fast Fourier Transform (\texttt{LFFT}), cf.\ \cite{KKP12}.
The multiplication with the matrix corresponding to the subsampled lattice can be done using the full lattice by
\begin{equation}\label{eq:LP}
    \bm L_{J,\mathcal B}
    = \bm P\bm L_{\mathcal B}
    \quad\text{with}\quad
    \bm P 
    = \left(\begin{matrix}\\\\\\\\\end{matrix}\right. \begin{matrix} \\ 1 & 0 & 0 & \dots & 0 & 0 & 0 \\ 0 & 0 & 1 & \dots & 0 & 0 & 0 \\ & \vdots &&&& \vdots & \\ 0 & 0 & 0 & \dots & 0 & 1 & 0 \\ \textcolor{gray}{j_1} && \textcolor{gray}{j_2} &&& \textcolor{gray}{j_{|J|}} \end{matrix} \left.\begin{matrix}\\\\\\\\\end{matrix}\right)
    \textcolor{gray}{\begin{matrix}1\\2\\\vdots\\|J|\end{matrix}}
        \in\{0,1\}^{|J|\times n} \,,
\end{equation}
which has thus the same computational cost.
Note, we essentially have an \texttt{FFT} where we only use parts of the result.
This is known as ``pruned \texttt{FFT}'', which are partial \texttt{FFT}s with cost $\mathcal O(n\log|J|)$.
As current \texttt{FFT} implementations are highly tuned this gain is not observed in practice.

With $|J| \sim \sqrt n \log n$ and $|\mathcal B| \sim \sqrt{n}$ as in \Cref{koroboverror} we obtain the computational cost in the upper two rows of \Cref{tab:cost} when using the \texttt{LFFT} compared to the naive matrix-vector product.
The iteration count does not appear here as it is bounded by a general constant independent of the number of points.

\begin{table} \centering
    \scalebox{0.93}{
    \bgroup
    \def\arraystretch{1.3}
    \begin{tabular}{|p{150pt}|l|l|}
        \hline
        \textbf{Subsampled method} & \textbf{Flops} & \textbf{Memory} \\
        \hline
        least squares approx.\ (naive) & $\mathcal O(|J||\mathcal B|) = \mathcal O(n\log n)$ & $\mathcal O(|J||\mathcal B|) = \mathcal O(n\log n)$ \\
        \hline
        least squares approx.\ using \texttt{LFFT} & $\mathcal O(n\log n)$ & $\mathcal O(n)$ \\
        \hline
        kernel method (naive) & $\mathcal O(r \, |J|^2) = \mathcal O(r \, n\log n)$ & $\mathcal O(|J|^2) = \mathcal O(n\log n)$ \\
        \hline
        kernel method using \texttt{FFT} & $\mathcal O(r \, n\log n)$ & $\mathcal O(n)$ \\
        \hline
    \end{tabular}
    \egroup
    }
    \vspace{10pt}
    \caption{Computational cost for the least squares approximation $S_{\mathcal B}^{\bm X_J}$ and the kernel method $A_{\text{ker}}^{\bm X_J}$ with a subsampled lattice of size $|J| = \lceil\sqrt{n}\,\log n\rceil$ utilizing the FFT compared to the naive matrix-vector product.
    Here $r$ is the number of iterations depending on the condition number of the kernel matrix.}\label{tab:cost}
\end{table} 

Note, when using the full lattice an iterative solver is not needed, since we have by the reconstructing property $(\bm L_{\mathcal B}^{\ast} \, \bm L_{\mathcal B}^{\ast})^{-1} = (1/n)\,\bm I$.
This can be also viewed as \texttt{LSQR} converging with one iteration because $\kappa_2(\bm L_{\mathcal B}^{\ast} \, \bm L_{\mathcal B}) = 1$ in \Cref{iterations}.
Thus, using the full lattice has computational cost $\mathcal O(n\log n)$ as well.

\subsection{Fast kernel method with subsampled lattices}\label{sec:ikernel} 

We further comment on the kernel method, which we introduced in \Cref{sec:classical} for the full lattice.
Since $H_{d, \alpha, \bm\gamma}$ is a reproducing kernel Hilbert space, it has an associated reproducing kernel $K(\cdot,\cdot) = K_{d,\alpha,\bm\gamma}(\cdot,\cdot)$.
For integer smoothness $\alpha\in\mathds N$, a simple closed form is known:
\begin{equation*}
    K(\bm x,\bm y)
    = \!\! \sum_{\mathfrak u\subseteq\{1,\dots,d\}} \!\! \gamma_{\mathfrak u} \prod_{j\in\mathfrak u} \eta_{\alpha}(x_j,x_j')
    \;\;\text{with}\;\;
    \eta_{\alpha}(x,x')
    = \frac{(2\pi)^{2\alpha} B_{2\alpha}((x-x')\modone)}{(-1)^{\alpha+1}(2\alpha)!} \,,
\end{equation*}
where $B_{2\alpha}$ is the Bernouilli polynomial of degree $2\alpha$.

The \emph{kernel approximation} of $f$ based on the sampling set $\bm X_J$ is defined analogously to \eqref{eq:kernelmethod} by
\begin{equation}\label{eq:subkernelmethod}
    A_{\text{ker}}^{\bm X_J}f
    \coloneqq \sum_{k\in J} a_k \, K(\cdot,\tfrac kn\bm z\modone)
    \quad\text{with}\quad
    \bm a = (a_k)_{k\in J} = \bm K_{J}^{-1}\bm f_J \,,
\end{equation}
where $\bm K_J = [K(\tfrac kn\bm z\modone, \tfrac{k'}{n}\bm z\modone)]_{k,k'\in J}$ is the so-called kernel matrix and $\bm f_J = [f(\tfrac kn\bm z\modone)]_{k\in J}$.

As discussed in \Cref{sec:classical}, the kernel method is optimal in the worst-case setting for any given set of points.
In particular, the bound from \Cref{koroboverror} applies as well, since $\ewc(A_{\text{ker}}^{\bm X_J}) \le \ewc(S_{\mathcal B}^{\bm X_J})$.
Similar to $\bm L_{J,\mathcal B}$, a fast multiplication with the kernel matrix for the full lattice is known, since $\bm K_{\{0,\dots,n-1\}}$ is a circulant matrix, cf.\ \cite[Table~1]{KKKNS21}, with a computational cost of
\begin{equation*}
    \mathcal O(n\log n) \text{ flops and } \mathcal O(n) \text{ memory requirements}.
\end{equation*}
The algorithm for the full lattice can be used for the subsampled kernel matrix as well.
We have
\begin{equation*}
    \bm K_J = \bm P \bm K_{\{0,\dots,n-1\}} \bm P^\ast \,,
\end{equation*}
with $\bm P$ as in \eqref{eq:LP}.
The cost for evaluating the kernel will depend on the structure of the weights $\gamma_{\mathfrak u}$, ranging from linear in $d$ for product weights to quadratic in $d$ for POD and SPOD weights, see \cite[Table~1]{KKKNS21}.

In contrast to the least squares matrix $\bm L_{J,\mathcal B}$, the condition number of the kernel matrix $\bm K_J$ grows in the number of points, resulting in the need for many iterations or the necessity of preconditioning, cf.\ \cite[Section~12.2]{W04}.
Thus, the number of necessary \texttt{CG} iterations cannot be bounded by a general constant according to \Cref{iterations}.

With $|J| \sim \sqrt n \log n$ as in \Cref{koroboverror}, we obtain the computational cost in the lower two rows of \Cref{tab:cost} when using the \texttt{FFT} compared to the naive matrix-vector product when using $r$ iterations.

\subsection{Comparison to full-lattice methods} 

From \Cref{koroboverror} we know that subsampling allows for a better sampling complexity.
However, picking the best method in regards to computational cost also depends on whether functions evaluations are expensive.
To discuss this balancing, we split the computations into five steps and address their individual computational cost.
This gives the following for the least squares approximation and the kernel method with the full and subsampled lattice, which is summarized in \Cref{tab:cost2}.
\begin{enumerate}
\item
    \emph{Point construction.}
    All methods use the CBC construction from \cite{GS25} with different cost depending on the type of weights as shown in the first row of \Cref{tab:cost2}.
    The additional cost for creating the subsampled indices is negligible.

\item
    \emph{The sampling of the function to approximate.}
    When $f_{\text{cost}}$ models the cost of a single function evaluation, this has to be multiplied by the number of samples used by the approximation.
    For the full lattice these are $n$ points and for the subsampled lattice we have $|J| \sim \sqrt{n}\log n$ points, cf.~\Cref{koroboverror}.

\item
    \emph{The function-independent setup.}
    These computations may depend on a pre-defined function class but can be used for any function thereof.
    This becomes important when multiple instances need to be approximated.

    For the least squares approximation this involves the construction of the frequency index set $\mathcal B$ and the computation of the inner products with the generating vector~$\bm z$.
    As discussed in \Cref{sec:ilsqr} this can be done incrementally in dimension with a cost of $d\,|\mathcal B|$, where $|\mathcal B| \sim \sqrt{n}$ in our case.

    For the kernel method the entries of the kernel matrix $\bm K_{J}$ have to be computed, where the cost of one single kernel evaluation depends on the weight parameters, cf.~\cite[Table~1]{KKKNS21}.
        Because we have $K(\frac kn \bm z\modone,\frac{k'}{n}\bm z\modone) = K(\frac{k-k'}{n}\bm z\modone, \bm 0)$, the number of different values equals $|\{(k-k') \bmod n : k,k'\in J\}|$.
    For the full lattice this evaluates to $n$.
    For the randomly subsampled lattice with $|J|\sim\sqrt n\log n$ we might have $|\{(k-k') \bmod n : k,k'\in J\}| = n$ as well.

\item
    \emph{The computation of the coefficients.}
    At this step the approximation is obtained from the sampled function values using the function-independent setup and encoded in a respective representation.
    The least squares approximation is described by its Fourier coefficients $[ \hat g_{\bm h} ]_{\bm h\in\mathcal B}$ and the kernel method by the coefficients $[a_k]_{k\in J}$ (where $J=\{0,\dots,n-1\}$ for the full lattice).
    These coefficients are defined by the solution of the systems of equations in \Cref{lsqrcoeffs} and \eqref{eq:kernelmethod}, respectively.
    For the full lattice the involved inverse matrices are known in closed form as discussed in \Cref{sec:ilsqr} and \Cref{sec:classical}, which results in a computational cost of an \texttt{FFT} for the computation of the coefficients.
    T least squares matrix $\bm L_{\mathcal B}$ for a full lattice with reconstructing property  is orthogonal and therefore has condition number $\kappa_2(\bm L_{\mathcal B}) = 1$.
    In contrast to that, the kernel matrix $\bm K$ has a bigger condition number, which grows with the smoothness $\alpha$ and number of points $n$, cf.~\cite[Chapter~6]{abithesis}.
    This might require special attention when computing the kernel approximation.{}

    For the subsampled methods we use the iterative schemes described in \Cref{sec:ilsqr} and \Cref{sec:ikernel}.
    In case of the least squares method, the number of iterations is bounded and by a general constant.
    For kernel method more iterations are needed when the number of points increases, as the kernel matrix is worse conditioned for an increasing number of points.
    In \Cref{tab:cost2} the number of iterations for the kernel method is denoted by $r$.
    
\item
    \emph{The evaluation of the approximation.}
    This describes the cost of evaluating the approximation in a single unseen data point $\bm x\in\mathds T^d$.
    For the least squares approximation this involves evaluating the sum
    $(S_{\mathcal B}^{\bm X_J} f)(\bm x) = \sum_{\bm h\in\mathcal B}\hat g_{\bm h} \exp(2\pi\mathrm i\langle\bm h,\bm x\rangle)$
    with a cost of $d\,|\mathcal B|$, with $|\mathcal B|\sim\sqrt n$ in our case.

    For the kernel method we need to evaluate $\sum_{k\in J} a_k \, K(\bm x,\tfrac kn\bm z\modone)$, which needs $|J|$ evaluations of the kernel with $|J| = \{0,\dots,n-1\}$ for the full lattice.
\end{enumerate}
Note that for a given $n$ all methods have the $L_2$ worst-case error proportional to $\sqrt[4]{S_n(\bm z)}$, cf.~\Cref{classicalbound,koroboverror}.
\begin{table} \centering
    \scalebox{0.88}{
    \bgroup
    \def\arraystretch{1.3}
    \begin{tabular}{|l|*{4}{>{\centering\arraybackslash}p{63pt}|}}
        \hline
        \textbf{Computation step} & $S_{\mathcal B}^{\bm X_J}$ & $S_{\mathcal B}^{\bm X}$ & $A_{\text{ker}}^{\bm X_J}$ & $A_{\text{ker}}^{\bm X}$ \\
        \hline
        point construction        & \multicolumn{4}{c|}{$d\,n\log n+d^i \alpha^j n$} \\
        \hline
        sampling                & $f_{\text{cost}}\sqrt{n}\log n$ & $f_{\text{cost}} n$ & $f_{\text{cost}}\sqrt{n}\log n$ & $f_{\text{cost}} n$ \\
        \hline
        function-indep.\ setup  & \multicolumn{2}{c|}{$d\sqrt n$} & \multicolumn{2}{c|}{$d^i\alpha^j n$} \\
        \hline
        computing coefficients  & \multicolumn{2}{c|}{$n\log n$} & $r n\log n$ & $n\log n$ \\
        \hline
        evaluation              & \multicolumn{2}{c|}{$d\sqrt n$} & $d^i\alpha^j\sqrt{n}\log n$ & $d^i\alpha^j n$ \\
        \hline
    \end{tabular}
    \egroup
    }
    \vspace{10pt}
    \caption{Computational cost for the least squares approximation $S_{\mathcal B}^{\bm X_J}$ and the kernel method $A_{\text{ker}}^{\bm X_J}$ with a full and subsampled lattice of size $|J| = \lceil\sqrt{n}\,\log n\rceil$.
    Here $r$ is the number of iterations depending on the condition number of the kernel matrix and $f_{\text{cost}}$ is the cost of a single function evaluation.
    We have $(i,j) = (1,0)$ for product weights, $(i,j) = (2,0)$ for POD weights, and $(i,j) = (2,2)$ for SPOD weights.
    }\label{tab:cost2}
\end{table} 

Apart from user-defined benchmark-functions, the cost of function evaluations $f_{\text{cost}}$ is often expensive, like when solving a PDE or taking in-field measurements, see e.g.~\cite{Kuo2012}.
Then the approximation serves as a surrogate model and a fast evaluation of the approximation is the priority, which favors the methods using the subsampled lattice.
The overload of using an iterative solver is also reasonable, in particular for the least squares approximation, where the number of iterations can be bounded by a general constant.
 \section{Numerical results}\label{sec:numerics} 

In this section we test the proposed algorithms in order to verify our theoretical results by comparing
\begin{itemize}
    \item the classical lattice algorithm \eqref{eq:classical} using the full lattice, with frequency index set $\mathcal A = \{\bm h\in\mathds Z^d : r_{d,\alpha,\bm\gamma}(\bm h)\le M\}$ and $M = 1/\sqrt{\mathcal S_n(\bm z)}$,
    \item the least squares approximation \eqref{eq:lsqr} using the full and subsampled lattice where the frequency index set $\mathcal B = \{\bm h\in\mathds Z^d : r_{d,\alpha,\bm\gamma}(\bm h)\le M\}$ and $M = 1/(2\sqrt{\mathcal S_n(\bm z)})$ is chosen as in \Cref{koroboverror}, and
    \item the kernel method \eqref{eq:kernelmethod} with both the full and subsampled lattice.
\end{itemize}
With $n$ being the next biggest prime from a power of 2, we compute the generating vector for a lattice using the CBC algorithm from \cite{GS25} rather than the original algorithm from \cite{CKNS20}.
The new algorithm has the advantages that it is much simpler to implement, is extensible to any dimension $d$, and (especially for SPOD weights) is cheaper to run.
Empirically it gives similar $L_2$-approximation error bounds, but equivalent theoretical error bounds are yet to be proven.
For the methods using subsampled points, we generate a random subset of size $|J| = \lceil \sqrt{n}\,\log n\rceil$, which is of the same order as in \Cref{koroboverror} but ignores the involved constants.
The $L_2$-error is then estimated using $50$ random shifts of the initial lattice,
\begin{equation*}
    \int_{\mathds T^d} |(f-A(f))(\bm x)|^2 \;\mathrm d\bm x
    \approx \frac{1}{50n} \sum_{i=1}^{50} \sum_{k=0}^{n-1} |(f-A(f))((\tfrac kn\bm z+\bm\Delta_i)\modone)|^2 \,,
\end{equation*}
where $\bm\Delta_i\in\mathds T^d$ for $i=1,\dots,50$ are chosen independently and uniformly random.
For the kernel method the evaluation at the shifted lattice can be done by the same fast methods as for the original lattice used in the approximation, cf.\ \cite[Section~5]{KKKNS21}.
The least squares approximation can be evaluated at any lattice fast using the \texttt{LFFT}.

The implementation is done in Julia with the \texttt{CG} and \texttt{LSQR} implementation used from \cite{iterativesolvers}.
All the computations were carried out on a high performance computer cluster \cite{katana} with eight parallel threads.

\subsection{Kink function}\label{ssec:kink} 

For a first numerical experiment we choose the target ``kink'' function
\begin{equation}\label{eq:kink}
    f(\bm x) = \Big(\frac{5^{3/4}15}{4\sqrt 3}\Big)^d \prod_{j=1}^{d} \max\Big\{0, \frac 15 - \Big(x_j-\frac 12\Big)^2 \Big\}
\end{equation}
in dimensions $d=2$ and $d=5$.
The 2-dimensional example is depicted in \Cref{fig:2dplots} (left).
This function was considered in \cite{BKUV17, KUV19, BKPU22} and we know that $f\in H_{d,3/2-\varepsilon,\bm\gamma}(\mathds T^d)$ for $\varepsilon>0$ and $\gamma_{\mathfrak u}=1$ for all $\mathfrak u\subseteq\{1,\dots,d\}$.

\begin{figure} \centering
    \hfill
    \begingroup
  \makeatletter
  \providecommand\color[2][]{\GenericError{(gnuplot) \space\space\space\@spaces}{Package color not loaded in conjunction with
      terminal option `colourtext'}{See the gnuplot documentation for explanation.}{Either use 'blacktext' in gnuplot or load the package
      color.sty in LaTeX.}\renewcommand\color[2][]{}}\providecommand\includegraphics[2][]{\GenericError{(gnuplot) \space\space\space\@spaces}{Package graphicx or graphics not loaded}{See the gnuplot documentation for explanation.}{The gnuplot epslatex terminal needs graphicx.sty or graphics.sty.}\renewcommand\includegraphics[2][]{}}\providecommand\rotatebox[2]{#2}\@ifundefined{ifGPcolor}{\newif\ifGPcolor
    \GPcolortrue
  }{}\@ifundefined{ifGPblacktext}{\newif\ifGPblacktext
    \GPblacktexttrue
  }{}\let\gplgaddtomacro\g@addto@macro
\gdef\gplbacktext{}\gdef\gplfronttext{}\makeatother
  \ifGPblacktext
\def\colorrgb#1{}\def\colorgray#1{}\else
\ifGPcolor
      \def\colorrgb#1{\color[rgb]{#1}}\def\colorgray#1{\color[gray]{#1}}\expandafter\def\csname LTw\endcsname{\color{white}}\expandafter\def\csname LTb\endcsname{\color{black}}\expandafter\def\csname LTa\endcsname{\color{black}}\expandafter\def\csname LT0\endcsname{\color[rgb]{1,0,0}}\expandafter\def\csname LT1\endcsname{\color[rgb]{0,1,0}}\expandafter\def\csname LT2\endcsname{\color[rgb]{0,0,1}}\expandafter\def\csname LT3\endcsname{\color[rgb]{1,0,1}}\expandafter\def\csname LT4\endcsname{\color[rgb]{0,1,1}}\expandafter\def\csname LT5\endcsname{\color[rgb]{1,1,0}}\expandafter\def\csname LT6\endcsname{\color[rgb]{0,0,0}}\expandafter\def\csname LT7\endcsname{\color[rgb]{1,0.3,0}}\expandafter\def\csname LT8\endcsname{\color[rgb]{0.5,0.5,0.5}}\else
\def\colorrgb#1{\color{black}}\def\colorgray#1{\color[gray]{#1}}\expandafter\def\csname LTw\endcsname{\color{white}}\expandafter\def\csname LTb\endcsname{\color{black}}\expandafter\def\csname LTa\endcsname{\color{black}}\expandafter\def\csname LT0\endcsname{\color{black}}\expandafter\def\csname LT1\endcsname{\color{black}}\expandafter\def\csname LT2\endcsname{\color{black}}\expandafter\def\csname LT3\endcsname{\color{black}}\expandafter\def\csname LT4\endcsname{\color{black}}\expandafter\def\csname LT5\endcsname{\color{black}}\expandafter\def\csname LT6\endcsname{\color{black}}\expandafter\def\csname LT7\endcsname{\color{black}}\expandafter\def\csname LT8\endcsname{\color{black}}\fi
  \fi
    \setlength{\unitlength}{0.0500bp}\ifx\gptboxheight\undefined \newlength{\gptboxheight}\newlength{\gptboxwidth}\newsavebox{\gptboxtext}\fi \setlength{\fboxrule}{0.5pt}\setlength{\fboxsep}{1pt}\definecolor{tbcol}{rgb}{1,1,1}\begin{picture}(2820.00,2260.00)\gplgaddtomacro\gplbacktext{\csname LTb\endcsname \put(2498,624){\makebox(0,0)[l]{\strut{}\scriptsize $(0,0)$}}\csname LTb\endcsname \put(1669,225){\makebox(0,0)[l]{\strut{}\scriptsize $(0,1)$}}\csname LTb\endcsname \put(213,357){\makebox(0,0)[l]{\strut{}\scriptsize $(1,1)$}}}\gplgaddtomacro\gplfronttext{\csname LTb\endcsname \put(183,728){\makebox(0,0)[r]{\strut{}\scriptsize 0}}\csname LTb\endcsname \put(183,1235){\makebox(0,0)[r]{\strut{}\scriptsize 1}}\csname LTb\endcsname \put(183,1742){\makebox(0,0)[r]{\strut{}\scriptsize 2}}}\gplbacktext
    \put(0,0){\includegraphics[width={141.00bp},height={113.00bp}]{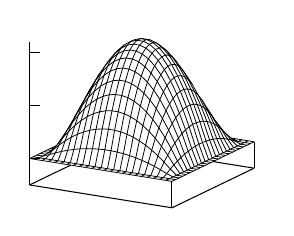}}\gplfronttext
  \end{picture}\endgroup
     \hfill
    \begingroup
  \makeatletter
  \providecommand\color[2][]{\GenericError{(gnuplot) \space\space\space\@spaces}{Package color not loaded in conjunction with
      terminal option `colourtext'}{See the gnuplot documentation for explanation.}{Either use 'blacktext' in gnuplot or load the package
      color.sty in LaTeX.}\renewcommand\color[2][]{}}\providecommand\includegraphics[2][]{\GenericError{(gnuplot) \space\space\space\@spaces}{Package graphicx or graphics not loaded}{See the gnuplot documentation for explanation.}{The gnuplot epslatex terminal needs graphicx.sty or graphics.sty.}\renewcommand\includegraphics[2][]{}}\providecommand\rotatebox[2]{#2}\@ifundefined{ifGPcolor}{\newif\ifGPcolor
    \GPcolortrue
  }{}\@ifundefined{ifGPblacktext}{\newif\ifGPblacktext
    \GPblacktexttrue
  }{}\let\gplgaddtomacro\g@addto@macro
\gdef\gplbacktext{}\gdef\gplfronttext{}\makeatother
  \ifGPblacktext
\def\colorrgb#1{}\def\colorgray#1{}\else
\ifGPcolor
      \def\colorrgb#1{\color[rgb]{#1}}\def\colorgray#1{\color[gray]{#1}}\expandafter\def\csname LTw\endcsname{\color{white}}\expandafter\def\csname LTb\endcsname{\color{black}}\expandafter\def\csname LTa\endcsname{\color{black}}\expandafter\def\csname LT0\endcsname{\color[rgb]{1,0,0}}\expandafter\def\csname LT1\endcsname{\color[rgb]{0,1,0}}\expandafter\def\csname LT2\endcsname{\color[rgb]{0,0,1}}\expandafter\def\csname LT3\endcsname{\color[rgb]{1,0,1}}\expandafter\def\csname LT4\endcsname{\color[rgb]{0,1,1}}\expandafter\def\csname LT5\endcsname{\color[rgb]{1,1,0}}\expandafter\def\csname LT6\endcsname{\color[rgb]{0,0,0}}\expandafter\def\csname LT7\endcsname{\color[rgb]{1,0.3,0}}\expandafter\def\csname LT8\endcsname{\color[rgb]{0.5,0.5,0.5}}\else
\def\colorrgb#1{\color{black}}\def\colorgray#1{\color[gray]{#1}}\expandafter\def\csname LTw\endcsname{\color{white}}\expandafter\def\csname LTb\endcsname{\color{black}}\expandafter\def\csname LTa\endcsname{\color{black}}\expandafter\def\csname LT0\endcsname{\color{black}}\expandafter\def\csname LT1\endcsname{\color{black}}\expandafter\def\csname LT2\endcsname{\color{black}}\expandafter\def\csname LT3\endcsname{\color{black}}\expandafter\def\csname LT4\endcsname{\color{black}}\expandafter\def\csname LT5\endcsname{\color{black}}\expandafter\def\csname LT6\endcsname{\color{black}}\expandafter\def\csname LT7\endcsname{\color{black}}\expandafter\def\csname LT8\endcsname{\color{black}}\fi
  \fi
    \setlength{\unitlength}{0.0500bp}\ifx\gptboxheight\undefined \newlength{\gptboxheight}\newlength{\gptboxwidth}\newsavebox{\gptboxtext}\fi \setlength{\fboxrule}{0.5pt}\setlength{\fboxsep}{1pt}\definecolor{tbcol}{rgb}{1,1,1}\begin{picture}(2820.00,2260.00)\gplgaddtomacro\gplbacktext{\csname LTb\endcsname \put(2498,624){\makebox(0,0)[l]{\strut{}\scriptsize $(0,0)$}}\csname LTb\endcsname \put(1669,225){\makebox(0,0)[l]{\strut{}\scriptsize $(0,1)$}}\csname LTb\endcsname \put(213,357){\makebox(0,0)[l]{\strut{}\scriptsize $(1,1)$}}}\gplgaddtomacro\gplfronttext{\csname LTb\endcsname \put(183,817){\makebox(0,0)[r]{\strut{}\scriptsize 1}}\csname LTb\endcsname \put(183,1501){\makebox(0,0)[r]{\strut{}\scriptsize 2}}}\gplbacktext
    \put(0,0){\includegraphics[width={141.00bp},height={113.00bp}]{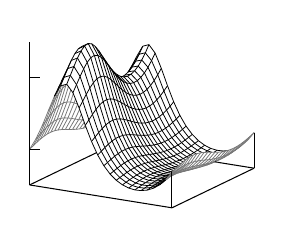}}\gplfronttext
  \end{picture}\endgroup
     \hfill
    \phantom.

    \caption{``Kink'' function \eqref{eq:kink} (left) and ``reciprocal'' function \eqref{eq:reciprocal} with $q=2.5$ (right), both in dimension $d=2$.}\label{fig:2dplots}
\end{figure} 

For the approximation we chose product weights with parameters
\begin{equation*}
    \alpha = 1
    \quad\text{and}\quad
    \gamma_j = \frac 12 \,.
\end{equation*}
The result for dimensions $d=2$ and $d=5$ are depicted in \Cref{fig:kink_E}.

We start by evaluating the 2-dimensional example, hoping to observe the theoretical convergence rate.
All rates displayed in the legend were estimated by fitting the last ten data points.
For the methods using subsampled lattices, we have seen in \Cref{cor} that the logarithm is negligible in the asymptotic regime, but as every computation will be in the preasymptotic regime, we opted for fitting the rate to $|J|/\log|J|$ instead of $|J|$ itself, which is closer to \Cref{koroboverror}.
\begin{itemize}
\item[\scalebox{1.2}{$\bullet$:}]
    For $(n, \sqrt[4]{\mathcal S_n(\bm z)})$ we expect a decay rate arbitrarily close to $1/2$ according to \Cref{Skorobov}.
    The numerics yield a rate of $0.47$ affirming this behavior.

\item[{\color{green}$\blacktriangledown$},{\color{blue}\scalebox{0.7}{\rotatebox{45}{$\blacksquare$}}}:]
    The classical lattice algorithm and the least squares method using the full lattice differ only by a factor of two in the radius of their frequencies.
    According to \Cref{classicalbound} they behave at least as well as $(n, \sqrt[4]{\mathcal S_n(\bm z)})$ with a rate of $1/2$.
    In fact, the same rate of $0.7$ is observed in both methods and they only differ in the corresponding constant.
    The better than expected rate can be explained by these methods ``picking up'' the smoothness.
    Since the smoothness does not affect the shape of the frequency index set, the method for higher smoothness is the same.
    In our case we have smoothness $3/2-\varepsilon$, of which half is ``picked up''.
    That it is only half is intuitively explained by full lattices only achieving half the optimal order of convergence, cf.\ \cite{BKUV17}.

\item[{\color{orange}$\blacktriangle$}:]
    For the kernel method using the full lattice we know by the discussion at the end of \Cref{sec:classical} that it performs at least as well as the classical lattice algorithm with a rate of $1/2$ in the worst-case setting.
    It was further shown in \cite{SK25} that the kernel method (using any set of points) doubles the rate of convergence when the function is doubly as smooth.
    In our case we do not have quite double the smoothness, but still see this effect numerically with a rate of $1.01$.
    Note, that the experiment is only about the approximation error of one individual function which could perform better than the worst-case error.
\item[\scalebox{1.2}{$\circ$:}]
    For the subsampled methods we expect to achieve the same worst-case error but with fewer points.
    With $(|J|/\log |J|, \sqrt[4]{\mathcal S_n(\bm z)})$, we observe a rate of $0.93$, which is about double of $0.47$ from the full lattice $(n,\sqrt[4]{\mathcal S_n(\bm z)})$ (\scalebox{1.2}{$\bullet$}).

\item[{\color{blue}\scalebox{0.7}{\rotatebox{45}{$\square$}}}:]
    According to \Cref{boundwithS} the least squares approximation using the subsampled lattice has at least the same rate as $(|J|/\log |J|, \sqrt[4]{\mathcal S_n(\bm z)})$ (\scalebox{1.2}{$\circ$}), which was $0.93$.
    For reasons explained above, the least squares method is able to ``pick up'' the smoothness, which is $3/2-\varepsilon$.
    Numerically we observe a rate of $1.37$.

\item[{\color{orange}\scalebox{0.7}{$\triangle$}}:]
    The kernel method performs at least as well as any other method using the same points.
    Since the least squares method using the subsampled lattice has at least linear decay, this holds for the kernel method using the subsampled points as well.
    Additionally the doubling of the rate effect from \cite{SK25} applies again.
    Numerically we observe the best rate among all methods with $1.47$.
\end{itemize}

\begin{figure} \centering
    \scalebox{0.9}{
    \begingroup
  \makeatletter
  \providecommand\color[2][]{\GenericError{(gnuplot) \space\space\space\@spaces}{Package color not loaded in conjunction with
      terminal option `colourtext'}{See the gnuplot documentation for explanation.}{Either use 'blacktext' in gnuplot or load the package
      color.sty in LaTeX.}\renewcommand\color[2][]{}}\providecommand\includegraphics[2][]{\GenericError{(gnuplot) \space\space\space\@spaces}{Package graphicx or graphics not loaded}{See the gnuplot documentation for explanation.}{The gnuplot epslatex terminal needs graphicx.sty or graphics.sty.}\renewcommand\includegraphics[2][]{}}\providecommand\rotatebox[2]{#2}\@ifundefined{ifGPcolor}{\newif\ifGPcolor
    \GPcolortrue
  }{}\@ifundefined{ifGPblacktext}{\newif\ifGPblacktext
    \GPblacktexttrue
  }{}\let\gplgaddtomacro\g@addto@macro
\gdef\gplbacktext{}\gdef\gplfronttext{}\makeatother
  \ifGPblacktext
\def\colorrgb#1{}\def\colorgray#1{}\else
\ifGPcolor
      \def\colorrgb#1{\color[rgb]{#1}}\def\colorgray#1{\color[gray]{#1}}\expandafter\def\csname LTw\endcsname{\color{white}}\expandafter\def\csname LTb\endcsname{\color{black}}\expandafter\def\csname LTa\endcsname{\color{black}}\expandafter\def\csname LT0\endcsname{\color[rgb]{1,0,0}}\expandafter\def\csname LT1\endcsname{\color[rgb]{0,1,0}}\expandafter\def\csname LT2\endcsname{\color[rgb]{0,0,1}}\expandafter\def\csname LT3\endcsname{\color[rgb]{1,0,1}}\expandafter\def\csname LT4\endcsname{\color[rgb]{0,1,1}}\expandafter\def\csname LT5\endcsname{\color[rgb]{1,1,0}}\expandafter\def\csname LT6\endcsname{\color[rgb]{0,0,0}}\expandafter\def\csname LT7\endcsname{\color[rgb]{1,0.3,0}}\expandafter\def\csname LT8\endcsname{\color[rgb]{0.5,0.5,0.5}}\else
\def\colorrgb#1{\color{black}}\def\colorgray#1{\color[gray]{#1}}\expandafter\def\csname LTw\endcsname{\color{white}}\expandafter\def\csname LTb\endcsname{\color{black}}\expandafter\def\csname LTa\endcsname{\color{black}}\expandafter\def\csname LT0\endcsname{\color{black}}\expandafter\def\csname LT1\endcsname{\color{black}}\expandafter\def\csname LT2\endcsname{\color{black}}\expandafter\def\csname LT3\endcsname{\color{black}}\expandafter\def\csname LT4\endcsname{\color{black}}\expandafter\def\csname LT5\endcsname{\color{black}}\expandafter\def\csname LT6\endcsname{\color{black}}\expandafter\def\csname LT7\endcsname{\color{black}}\expandafter\def\csname LT8\endcsname{\color{black}}\fi
  \fi
    \setlength{\unitlength}{0.0500bp}\ifx\gptboxheight\undefined \newlength{\gptboxheight}\newlength{\gptboxwidth}\newsavebox{\gptboxtext}\fi \setlength{\fboxrule}{0.5pt}\setlength{\fboxsep}{1pt}\definecolor{tbcol}{rgb}{1,1,1}\begin{picture}(7920.00,3960.00)\gplgaddtomacro\gplbacktext{\colorrgb{0.00,0.00,0.00}\put(251,551){\makebox(0,0)[r]{\strut{}\scriptsize $10^{-6}$}}\colorrgb{0.00,0.00,0.00}\put(251,1076){\makebox(0,0)[r]{\strut{}\scriptsize $10^{-5}$}}\colorrgb{0.00,0.00,0.00}\put(251,1600){\makebox(0,0)[r]{\strut{}\scriptsize $10^{-4}$}}\colorrgb{0.00,0.00,0.00}\put(251,2125){\makebox(0,0)[r]{\strut{}\scriptsize $10^{-3}$}}\colorrgb{0.00,0.00,0.00}\put(251,2650){\makebox(0,0)[r]{\strut{}\scriptsize $10^{-2}$}}\colorrgb{0.00,0.00,0.00}\put(251,3175){\makebox(0,0)[r]{\strut{}\scriptsize $10^{-1}$}}\colorrgb{0.00,0.00,0.00}\put(251,3700){\makebox(0,0)[r]{\strut{}\scriptsize $10^{0}$}}\colorrgb{0.00,0.00,0.00}\put(352,383){\makebox(0,0){\strut{}\scriptsize $10^{2}$}}\colorrgb{0.00,0.00,0.00}\put(1723,383){\makebox(0,0){\strut{}\scriptsize $10^{4}$}}\colorrgb{0.00,0.00,0.00}\put(3094,383){\makebox(0,0){\strut{}\scriptsize $10^{6}$}}\colorrgb{0.00,0.00,0.00}\put(4466,383){\makebox(0,0){\strut{}\scriptsize $10^{8}$}}\csname LTb\endcsname \put(3555,3267){\makebox(0,0)[l]{\strut{}\scriptsize 1}}\csname LTb\endcsname \put(4494,2843){\makebox(0,0)[l]{\strut{}\scriptsize $\tfrac 12$}}\csname LTb\endcsname \put(1436,825){\makebox(0,0){\strut{}\scriptsize 1}}\csname LTb\endcsname \put(736,1442){\makebox(0,0)[r]{\strut{}\scriptsize 1}}\csname LTb\endcsname \put(7188,1063){\makebox(0,0)[l]{\strut{}\scriptsize $\alpha = 1$}}}\gplgaddtomacro\gplfronttext{\csname LTb\endcsname \put(5382,1363){\makebox(0,0)[l]{\strut{}\scriptsize\parbox{85pt}{$(|J|, \|f-A_{\text{ker}}^{\bm X_J} f\|_{L_2})$}$\Big(\frac{|J|}{\log|J|}\Big)^{-1.47}$}}\csname LTb\endcsname \put(5382,1722){\makebox(0,0)[l]{\strut{}\scriptsize\parbox{85pt}{$(|J|, \|f-S_{\mathcal B}^{\bm X_J} f\|_{L_2})$}$\Big(\frac{|J|}{\log|J|}\Big)^{-1.37}$}}\csname LTb\endcsname \put(5382,2082){\makebox(0,0)[l]{\strut{}\scriptsize\parbox{85pt}{$(|J|, \sqrt[4]{\mathcal S_n(\bm z)})$}$\Big(\frac{|J|}{\log|J|}\Big)^{-0.93}$}}\csname LTb\endcsname \put(5382,2441){\makebox(0,0)[l]{\strut{}\scriptsize\parbox{85pt}{$(n, \|f-A_{\text{ker}}^{\bm X} f\|_{L_2})$}$n^{-1.01}$}}\csname LTb\endcsname \put(5382,2801){\makebox(0,0)[l]{\strut{}\scriptsize\parbox{85pt}{$(n, \|f-S_{\mathcal B}^{\bm X} f\|_{L_2})$}$n^{-0.70}$}}\csname LTb\endcsname \put(5382,3160){\makebox(0,0)[l]{\strut{}\scriptsize\parbox{85pt}{$(n, \|f-A_{\mathcal A}^{\bm X} f\|_{L_2})$}$n^{-0.70}$}}\csname LTb\endcsname \put(5382,3520){\makebox(0,0)[l]{\strut{}\scriptsize\parbox{85pt}{$(n, \sqrt[4]{\mathcal S_n(\bm z)})$}$n^{-0.47}$}}\csname LTb\endcsname \put(2615,119){\makebox(0,0){\strut{}\scriptsize number of points}}\csname LTb\endcsname \put(2615,3939){\makebox(0,0){\strut{}kink function $d=2$}}}\gplbacktext
    \put(0,0){\includegraphics[width={396.00bp},height={198.00bp}]{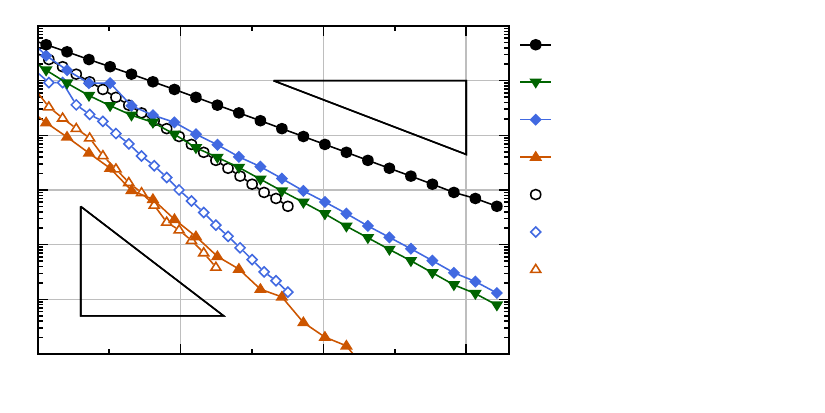}}\gplfronttext
  \end{picture}\endgroup
     }
    
    \vspace{20pt}

    \scalebox{0.9}{
    \begingroup
  \makeatletter
  \providecommand\color[2][]{\GenericError{(gnuplot) \space\space\space\@spaces}{Package color not loaded in conjunction with
      terminal option `colourtext'}{See the gnuplot documentation for explanation.}{Either use 'blacktext' in gnuplot or load the package
      color.sty in LaTeX.}\renewcommand\color[2][]{}}\providecommand\includegraphics[2][]{\GenericError{(gnuplot) \space\space\space\@spaces}{Package graphicx or graphics not loaded}{See the gnuplot documentation for explanation.}{The gnuplot epslatex terminal needs graphicx.sty or graphics.sty.}\renewcommand\includegraphics[2][]{}}\providecommand\rotatebox[2]{#2}\@ifundefined{ifGPcolor}{\newif\ifGPcolor
    \GPcolortrue
  }{}\@ifundefined{ifGPblacktext}{\newif\ifGPblacktext
    \GPblacktexttrue
  }{}\let\gplgaddtomacro\g@addto@macro
\gdef\gplbacktext{}\gdef\gplfronttext{}\makeatother
  \ifGPblacktext
\def\colorrgb#1{}\def\colorgray#1{}\else
\ifGPcolor
      \def\colorrgb#1{\color[rgb]{#1}}\def\colorgray#1{\color[gray]{#1}}\expandafter\def\csname LTw\endcsname{\color{white}}\expandafter\def\csname LTb\endcsname{\color{black}}\expandafter\def\csname LTa\endcsname{\color{black}}\expandafter\def\csname LT0\endcsname{\color[rgb]{1,0,0}}\expandafter\def\csname LT1\endcsname{\color[rgb]{0,1,0}}\expandafter\def\csname LT2\endcsname{\color[rgb]{0,0,1}}\expandafter\def\csname LT3\endcsname{\color[rgb]{1,0,1}}\expandafter\def\csname LT4\endcsname{\color[rgb]{0,1,1}}\expandafter\def\csname LT5\endcsname{\color[rgb]{1,1,0}}\expandafter\def\csname LT6\endcsname{\color[rgb]{0,0,0}}\expandafter\def\csname LT7\endcsname{\color[rgb]{1,0.3,0}}\expandafter\def\csname LT8\endcsname{\color[rgb]{0.5,0.5,0.5}}\else
\def\colorrgb#1{\color{black}}\def\colorgray#1{\color[gray]{#1}}\expandafter\def\csname LTw\endcsname{\color{white}}\expandafter\def\csname LTb\endcsname{\color{black}}\expandafter\def\csname LTa\endcsname{\color{black}}\expandafter\def\csname LT0\endcsname{\color{black}}\expandafter\def\csname LT1\endcsname{\color{black}}\expandafter\def\csname LT2\endcsname{\color{black}}\expandafter\def\csname LT3\endcsname{\color{black}}\expandafter\def\csname LT4\endcsname{\color{black}}\expandafter\def\csname LT5\endcsname{\color{black}}\expandafter\def\csname LT6\endcsname{\color{black}}\expandafter\def\csname LT7\endcsname{\color{black}}\expandafter\def\csname LT8\endcsname{\color{black}}\fi
  \fi
    \setlength{\unitlength}{0.0500bp}\ifx\gptboxheight\undefined \newlength{\gptboxheight}\newlength{\gptboxwidth}\newsavebox{\gptboxtext}\fi \setlength{\fboxrule}{0.5pt}\setlength{\fboxsep}{1pt}\definecolor{tbcol}{rgb}{1,1,1}\begin{picture}(7920.00,3960.00)\gplgaddtomacro\gplbacktext{\colorrgb{0.00,0.00,0.00}\put(251,551){\makebox(0,0)[r]{\strut{}\scriptsize $10^{-4}$}}\colorrgb{0.00,0.00,0.00}\put(251,1181){\makebox(0,0)[r]{\strut{}\scriptsize $10^{-3}$}}\colorrgb{0.00,0.00,0.00}\put(251,1810){\makebox(0,0)[r]{\strut{}\scriptsize $10^{-2}$}}\colorrgb{0.00,0.00,0.00}\put(251,2440){\makebox(0,0)[r]{\strut{}\scriptsize $10^{-1}$}}\colorrgb{0.00,0.00,0.00}\put(251,3070){\makebox(0,0)[r]{\strut{}\scriptsize $10^{0}$}}\colorrgb{0.00,0.00,0.00}\put(251,3700){\makebox(0,0)[r]{\strut{}\scriptsize $10^{1}$}}\colorrgb{0.00,0.00,0.00}\put(352,383){\makebox(0,0){\strut{}\scriptsize $10^{2}$}}\colorrgb{0.00,0.00,0.00}\put(1675,383){\makebox(0,0){\strut{}\scriptsize $10^{4}$}}\colorrgb{0.00,0.00,0.00}\put(2997,383){\makebox(0,0){\strut{}\scriptsize $10^{6}$}}\colorrgb{0.00,0.00,0.00}\put(4320,383){\makebox(0,0){\strut{}\scriptsize $10^{8}$}}\csname LTb\endcsname \put(3368,3273){\makebox(0,0)[l]{\strut{}\scriptsize 1}}\csname LTb\endcsname \put(4200,2817){\makebox(0,0)[l]{\strut{}\scriptsize $\tfrac 12$}}\csname LTb\endcsname \put(1298,919){\makebox(0,0){\strut{}\scriptsize 1}}\csname LTb\endcsname \put(723,1526){\makebox(0,0)[r]{\strut{}\scriptsize 1}}\csname LTb\endcsname \put(7188,1063){\makebox(0,0)[l]{\strut{}\scriptsize $\alpha = 1$}}}\gplgaddtomacro\gplfronttext{\csname LTb\endcsname \put(5382,1363){\makebox(0,0)[l]{\strut{}\scriptsize\parbox{85pt}{$(|J|, \|f-A_{\text{ker}}^{\bm X_J} f\|_{L_2})$}$\Big(\frac{|J|}{\log|J|}\Big)^{-0.93}$}}\csname LTb\endcsname \put(5382,1722){\makebox(0,0)[l]{\strut{}\scriptsize\parbox{85pt}{$(|J|, \|f-S_{\mathcal B}^{\bm X_J} f\|_{L_2})$}$\Big(\frac{|J|}{\log|J|}\Big)^{-1.03}$}}\csname LTb\endcsname \put(5382,2082){\makebox(0,0)[l]{\strut{}\scriptsize\parbox{85pt}{$(|J|, \sqrt[4]{\mathcal S_n(\bm z)})$}$\Big(\frac{|J|}{\log|J|}\Big)^{-0.77}$}}\csname LTb\endcsname \put(5382,2441){\makebox(0,0)[l]{\strut{}\scriptsize\parbox{85pt}{$(n, \|f-A_{\text{ker}}^{\bm X} f\|_{L_2})$}$n^{-0.81}$}}\csname LTb\endcsname \put(5382,2801){\makebox(0,0)[l]{\strut{}\scriptsize\parbox{85pt}{$(n, \|f-S_{\mathcal B}^{\bm X} f\|_{L_2})$}$n^{-0.53}$}}\csname LTb\endcsname \put(5382,3160){\makebox(0,0)[l]{\strut{}\scriptsize\parbox{85pt}{$(n, \|f-A_{\mathcal A}^{\bm X} f\|_{L_2})$}$n^{-0.52}$}}\csname LTb\endcsname \put(5382,3520){\makebox(0,0)[l]{\strut{}\scriptsize\parbox{85pt}{$(n, \sqrt[4]{\mathcal S_n(\bm z)})$}$n^{-0.39}$}}\csname LTb\endcsname \put(2615,119){\makebox(0,0){\strut{}\scriptsize number of points}}\csname LTb\endcsname \put(2615,3939){\makebox(0,0){\strut{}kink function $d=5$}}}\gplbacktext
    \put(0,0){\includegraphics[width={396.00bp},height={198.00bp}]{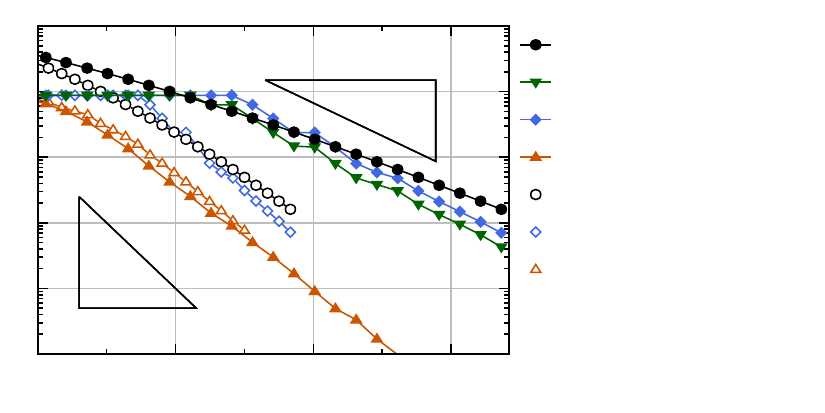}}\gplfronttext
  \end{picture}\endgroup
     }
    \caption{$L_2$-error of the approximations of the kink function, with $d=2$ (top) and $d=5$ (bottom) with respect to the number of sampling points for the classical lattice algorithm, the least squares method, and the kernel method using the full and subsampled lattice.}\label{fig:kink_E}
\end{figure} 

The 5-dimensional example shows the same qualitative but not quantitative behavior, as increasing the dimension makes the problem a lot harder when all dimensions are equally important.
The dimension enters logarithmically as we have for the ``linear width'' $a_n \sim n^{-\alpha}(\log n)^{(d-1)\alpha}$, which is a lower bound to the worst-case error using $n$ samples, cf.\ \cite{DTU18}.
One notable difference is a staircase-like error behavior for the classical lattice algorithm as well as the least squares approximation.
This is due to the number of frequencies not being continuous with respect to the radius of the frequency index set.
Especially for small radii a bunch of frequencies are added at once relative to the total number of frequencies, resulting in a jump.

\subsection{Reciprocal function} 

For a second batch of numerical experiments we choose the $d=100$ dimensional target ``reciprocal'' function
\begin{equation}\label{eq:reciprocal}
    f(\bm x) = \frac{1}{a(\bm x)} \,,\quad
    a(\bm x) = 1+0.5 \sum_{j=1}^{d} j^{-q} \sin(2\pi x_j)\,,
\end{equation}
with decay parameter $q=6$ and $q=2.5$.
The function in two dimensions is depicted in \Cref{fig:2dplots} (right).
This function was considered in e.g., \cite[Section~4.2]{KKNS25} and solves the algebraic equation $a(\bm x)f(\bm x)=1$, mimicking the features of a partial differential equation with a random coefficient whilst avoiding the complexity of a spatial variable or the need of a finite element solver.

For the approximation we choose POD weights with parameters
\begin{equation*}
    \alpha \in \{1,2\}
    \quad\text{and}\quad
    \gamma_{\mathfrak u} = |\mathfrak u|! \prod_{j\in \mathfrak u}j^{-q} \,.
\end{equation*}

Following the same procedure as in \Cref{ssec:kink},
in \Cref{fig:reciprocal6_E} we depict the results for $q=6$, with $\alpha=1$ (top) and $\alpha=2$ (bottom).
\begin{figure} \centering
    \scalebox{0.9}{
    \begingroup
  \makeatletter
  \providecommand\color[2][]{\GenericError{(gnuplot) \space\space\space\@spaces}{Package color not loaded in conjunction with
      terminal option `colourtext'}{See the gnuplot documentation for explanation.}{Either use 'blacktext' in gnuplot or load the package
      color.sty in LaTeX.}\renewcommand\color[2][]{}}\providecommand\includegraphics[2][]{\GenericError{(gnuplot) \space\space\space\@spaces}{Package graphicx or graphics not loaded}{See the gnuplot documentation for explanation.}{The gnuplot epslatex terminal needs graphicx.sty or graphics.sty.}\renewcommand\includegraphics[2][]{}}\providecommand\rotatebox[2]{#2}\@ifundefined{ifGPcolor}{\newif\ifGPcolor
    \GPcolortrue
  }{}\@ifundefined{ifGPblacktext}{\newif\ifGPblacktext
    \GPblacktexttrue
  }{}\let\gplgaddtomacro\g@addto@macro
\gdef\gplbacktext{}\gdef\gplfronttext{}\makeatother
  \ifGPblacktext
\def\colorrgb#1{}\def\colorgray#1{}\else
\ifGPcolor
      \def\colorrgb#1{\color[rgb]{#1}}\def\colorgray#1{\color[gray]{#1}}\expandafter\def\csname LTw\endcsname{\color{white}}\expandafter\def\csname LTb\endcsname{\color{black}}\expandafter\def\csname LTa\endcsname{\color{black}}\expandafter\def\csname LT0\endcsname{\color[rgb]{1,0,0}}\expandafter\def\csname LT1\endcsname{\color[rgb]{0,1,0}}\expandafter\def\csname LT2\endcsname{\color[rgb]{0,0,1}}\expandafter\def\csname LT3\endcsname{\color[rgb]{1,0,1}}\expandafter\def\csname LT4\endcsname{\color[rgb]{0,1,1}}\expandafter\def\csname LT5\endcsname{\color[rgb]{1,1,0}}\expandafter\def\csname LT6\endcsname{\color[rgb]{0,0,0}}\expandafter\def\csname LT7\endcsname{\color[rgb]{1,0.3,0}}\expandafter\def\csname LT8\endcsname{\color[rgb]{0.5,0.5,0.5}}\else
\def\colorrgb#1{\color{black}}\def\colorgray#1{\color[gray]{#1}}\expandafter\def\csname LTw\endcsname{\color{white}}\expandafter\def\csname LTb\endcsname{\color{black}}\expandafter\def\csname LTa\endcsname{\color{black}}\expandafter\def\csname LT0\endcsname{\color{black}}\expandafter\def\csname LT1\endcsname{\color{black}}\expandafter\def\csname LT2\endcsname{\color{black}}\expandafter\def\csname LT3\endcsname{\color{black}}\expandafter\def\csname LT4\endcsname{\color{black}}\expandafter\def\csname LT5\endcsname{\color{black}}\expandafter\def\csname LT6\endcsname{\color{black}}\expandafter\def\csname LT7\endcsname{\color{black}}\expandafter\def\csname LT8\endcsname{\color{black}}\fi
  \fi
    \setlength{\unitlength}{0.0500bp}\ifx\gptboxheight\undefined \newlength{\gptboxheight}\newlength{\gptboxwidth}\newsavebox{\gptboxtext}\fi \setlength{\fboxrule}{0.5pt}\setlength{\fboxsep}{1pt}\definecolor{tbcol}{rgb}{1,1,1}\begin{picture}(7920.00,3960.00)\gplgaddtomacro\gplbacktext{\colorrgb{0.00,0.00,0.00}\put(251,901){\makebox(0,0)[r]{\strut{}\scriptsize $10^{-8}$}}\colorrgb{0.00,0.00,0.00}\put(251,1251){\makebox(0,0)[r]{\strut{}\scriptsize $10^{-7}$}}\colorrgb{0.00,0.00,0.00}\put(251,1600){\makebox(0,0)[r]{\strut{}\scriptsize $10^{-6}$}}\colorrgb{0.00,0.00,0.00}\put(251,1950){\makebox(0,0)[r]{\strut{}\scriptsize $10^{-5}$}}\colorrgb{0.00,0.00,0.00}\put(251,2300){\makebox(0,0)[r]{\strut{}\scriptsize $10^{-4}$}}\colorrgb{0.00,0.00,0.00}\put(251,2650){\makebox(0,0)[r]{\strut{}\scriptsize $10^{-3}$}}\colorrgb{0.00,0.00,0.00}\put(251,3000){\makebox(0,0)[r]{\strut{}\scriptsize $10^{-2}$}}\colorrgb{0.00,0.00,0.00}\put(251,3350){\makebox(0,0)[r]{\strut{}\scriptsize $10^{-1}$}}\colorrgb{0.00,0.00,0.00}\put(251,3700){\makebox(0,0)[r]{\strut{}\scriptsize $10^{0}$}}\colorrgb{0.00,0.00,0.00}\put(352,383){\makebox(0,0){\strut{}\scriptsize $10^{2}$}}\colorrgb{0.00,0.00,0.00}\put(1940,383){\makebox(0,0){\strut{}\scriptsize $10^{4}$}}\colorrgb{0.00,0.00,0.00}\put(3529,383){\makebox(0,0){\strut{}\scriptsize $10^{6}$}}\csname LTb\endcsname \put(3584,3455){\makebox(0,0)[l]{\strut{}\scriptsize 1}}\csname LTb\endcsname \put(4672,3128){\makebox(0,0)[l]{\strut{}\scriptsize $\tfrac 12$}}\csname LTb\endcsname \put(1368,1040){\makebox(0,0){\strut{}\scriptsize 1}}\csname LTb\endcsname \put(558,1495){\makebox(0,0)[r]{\strut{}\scriptsize 1}}\csname LTb\endcsname \put(7188,1063){\makebox(0,0)[l]{\strut{}\scriptsize $\alpha = 1$}}}\gplgaddtomacro\gplfronttext{\csname LTb\endcsname \put(5382,1363){\makebox(0,0)[l]{\strut{}\scriptsize\parbox{85pt}{$(|J|, \|f-A_{\text{ker}}^{\bm X_J} f\|_{L_2})$}$\Big(\frac{|J|}{\log|J|}\Big)^{-1.89}$}}\csname LTb\endcsname \put(5382,1722){\makebox(0,0)[l]{\strut{}\scriptsize\parbox{85pt}{$(|J|, \|f-S_{\mathcal B}^{\bm X_J} f\|_{L_2})$}$\Big(\frac{|J|}{\log|J|}\Big)^{-1.65}$}}\csname LTb\endcsname \put(5382,2082){\makebox(0,0)[l]{\strut{}\scriptsize\parbox{85pt}{$(|J|, \sqrt[4]{\mathcal S_n(\bm z)})$}$\Big(\frac{|J|}{\log|J|}\Big)^{-0.91}$}}\csname LTb\endcsname \put(5382,2441){\makebox(0,0)[l]{\strut{}\scriptsize\parbox{85pt}{$(n, \|f-A_{\text{ker}}^{\bm X} f\|_{L_2})$}$n^{-1.41}$}}\csname LTb\endcsname \put(5382,2801){\makebox(0,0)[l]{\strut{}\scriptsize\parbox{85pt}{$(n, \|f-S_{\mathcal B}^{\bm X} f\|_{L_2})$}$n^{-0.84}$}}\csname LTb\endcsname \put(5382,3160){\makebox(0,0)[l]{\strut{}\scriptsize\parbox{85pt}{$(n, \|f-A_{\mathcal A}^{\bm X} f\|_{L_2})$}$n^{-0.86}$}}\csname LTb\endcsname \put(5382,3520){\makebox(0,0)[l]{\strut{}\scriptsize\parbox{85pt}{$(n, \sqrt[4]{\mathcal S_n(\bm z)})$}$n^{-0.47}$}}\csname LTb\endcsname \put(2615,119){\makebox(0,0){\strut{}\scriptsize number of points}}\csname LTb\endcsname \put(2615,3939){\makebox(0,0){\strut{}reciprocal function $d=100$, $q=6$}}}\gplbacktext
    \put(0,0){\includegraphics[width={396.00bp},height={198.00bp}]{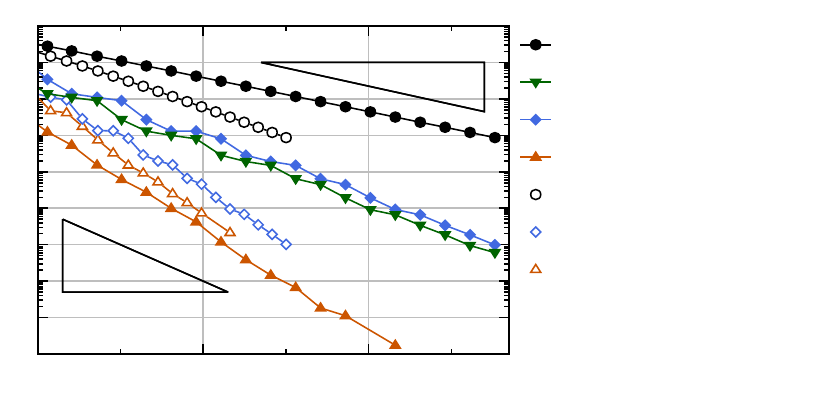}}\gplfronttext
  \end{picture}\endgroup
     }
    
    \vspace{20pt}

    \scalebox{0.9}{
    \begingroup
  \makeatletter
  \providecommand\color[2][]{\GenericError{(gnuplot) \space\space\space\@spaces}{Package color not loaded in conjunction with
      terminal option `colourtext'}{See the gnuplot documentation for explanation.}{Either use 'blacktext' in gnuplot or load the package
      color.sty in LaTeX.}\renewcommand\color[2][]{}}\providecommand\includegraphics[2][]{\GenericError{(gnuplot) \space\space\space\@spaces}{Package graphicx or graphics not loaded}{See the gnuplot documentation for explanation.}{The gnuplot epslatex terminal needs graphicx.sty or graphics.sty.}\renewcommand\includegraphics[2][]{}}\providecommand\rotatebox[2]{#2}\@ifundefined{ifGPcolor}{\newif\ifGPcolor
    \GPcolortrue
  }{}\@ifundefined{ifGPblacktext}{\newif\ifGPblacktext
    \GPblacktexttrue
  }{}\let\gplgaddtomacro\g@addto@macro
\gdef\gplbacktext{}\gdef\gplfronttext{}\makeatother
  \ifGPblacktext
\def\colorrgb#1{}\def\colorgray#1{}\else
\ifGPcolor
      \def\colorrgb#1{\color[rgb]{#1}}\def\colorgray#1{\color[gray]{#1}}\expandafter\def\csname LTw\endcsname{\color{white}}\expandafter\def\csname LTb\endcsname{\color{black}}\expandafter\def\csname LTa\endcsname{\color{black}}\expandafter\def\csname LT0\endcsname{\color[rgb]{1,0,0}}\expandafter\def\csname LT1\endcsname{\color[rgb]{0,1,0}}\expandafter\def\csname LT2\endcsname{\color[rgb]{0,0,1}}\expandafter\def\csname LT3\endcsname{\color[rgb]{1,0,1}}\expandafter\def\csname LT4\endcsname{\color[rgb]{0,1,1}}\expandafter\def\csname LT5\endcsname{\color[rgb]{1,1,0}}\expandafter\def\csname LT6\endcsname{\color[rgb]{0,0,0}}\expandafter\def\csname LT7\endcsname{\color[rgb]{1,0.3,0}}\expandafter\def\csname LT8\endcsname{\color[rgb]{0.5,0.5,0.5}}\else
\def\colorrgb#1{\color{black}}\def\colorgray#1{\color[gray]{#1}}\expandafter\def\csname LTw\endcsname{\color{white}}\expandafter\def\csname LTb\endcsname{\color{black}}\expandafter\def\csname LTa\endcsname{\color{black}}\expandafter\def\csname LT0\endcsname{\color{black}}\expandafter\def\csname LT1\endcsname{\color{black}}\expandafter\def\csname LT2\endcsname{\color{black}}\expandafter\def\csname LT3\endcsname{\color{black}}\expandafter\def\csname LT4\endcsname{\color{black}}\expandafter\def\csname LT5\endcsname{\color{black}}\expandafter\def\csname LT6\endcsname{\color{black}}\expandafter\def\csname LT7\endcsname{\color{black}}\expandafter\def\csname LT8\endcsname{\color{black}}\fi
  \fi
    \setlength{\unitlength}{0.0500bp}\ifx\gptboxheight\undefined \newlength{\gptboxheight}\newlength{\gptboxwidth}\newsavebox{\gptboxtext}\fi \setlength{\fboxrule}{0.5pt}\setlength{\fboxsep}{1pt}\definecolor{tbcol}{rgb}{1,1,1}\begin{picture}(7920.00,3960.00)\gplgaddtomacro\gplbacktext{\colorrgb{0.00,0.00,0.00}\put(251,944){\makebox(0,0)[r]{\strut{}\scriptsize $10^{-8}$}}\colorrgb{0.00,0.00,0.00}\put(251,1338){\makebox(0,0)[r]{\strut{}\scriptsize $10^{-7}$}}\colorrgb{0.00,0.00,0.00}\put(251,1732){\makebox(0,0)[r]{\strut{}\scriptsize $10^{-6}$}}\colorrgb{0.00,0.00,0.00}\put(251,2125){\makebox(0,0)[r]{\strut{}\scriptsize $10^{-5}$}}\colorrgb{0.00,0.00,0.00}\put(251,2519){\makebox(0,0)[r]{\strut{}\scriptsize $10^{-4}$}}\colorrgb{0.00,0.00,0.00}\put(251,2913){\makebox(0,0)[r]{\strut{}\scriptsize $10^{-3}$}}\colorrgb{0.00,0.00,0.00}\put(251,3306){\makebox(0,0)[r]{\strut{}\scriptsize $10^{-2}$}}\colorrgb{0.00,0.00,0.00}\put(251,3700){\makebox(0,0)[r]{\strut{}\scriptsize $10^{-1}$}}\colorrgb{0.00,0.00,0.00}\put(352,383){\makebox(0,0){\strut{}\scriptsize $10^{2}$}}\colorrgb{0.00,0.00,0.00}\put(1609,383){\makebox(0,0){\strut{}\scriptsize $10^{3}$}}\colorrgb{0.00,0.00,0.00}\put(2865,383){\makebox(0,0){\strut{}\scriptsize $10^{4}$}}\colorrgb{0.00,0.00,0.00}\put(4122,383){\makebox(0,0){\strut{}\scriptsize $10^{5}$}}\csname LTb\endcsname \put(3581,3543){\makebox(0,0)[l]{\strut{}\scriptsize 1}}\csname LTb\endcsname \put(4674,3107){\makebox(0,0)[l]{\strut{}\scriptsize 1}}\csname LTb\endcsname \put(1632,826){\makebox(0,0){\strut{}\scriptsize 1}}\csname LTb\endcsname \put(678,1526){\makebox(0,0)[r]{\strut{}\scriptsize 2}}\csname LTb\endcsname \put(7188,1063){\makebox(0,0)[l]{\strut{}\scriptsize $\alpha = 2$}}}\gplgaddtomacro\gplfronttext{\csname LTb\endcsname \put(5382,1363){\makebox(0,0)[l]{\strut{}\scriptsize\parbox{85pt}{$(|J|, \|f-A_{\text{ker}}^{\bm X_J} f\|_{L_2})$}$\Big(\frac{|J|}{\log|J|}\Big)^{-2.80}$}}\csname LTb\endcsname \put(5382,1722){\makebox(0,0)[l]{\strut{}\scriptsize\parbox{85pt}{$(|J|, \|f-S_{\mathcal B}^{\bm X_J} f\|_{L_2})$}$\Big(\frac{|J|}{\log|J|}\Big)^{-2.79}$}}\csname LTb\endcsname \put(5382,2082){\makebox(0,0)[l]{\strut{}\scriptsize\parbox{85pt}{$(|J|, \sqrt[4]{\mathcal S_n(\bm z)})$}$\Big(\frac{|J|}{\log|J|}\Big)^{-1.60}$}}\csname LTb\endcsname \put(5382,2441){\makebox(0,0)[l]{\strut{}\scriptsize\parbox{85pt}{$(n, \|f-A_{\text{ker}}^{\bm X} f\|_{L_2})$}$n^{-1.94}$}}\csname LTb\endcsname \put(5382,2801){\makebox(0,0)[l]{\strut{}\scriptsize\parbox{85pt}{$(n, \|f-S_{\mathcal B}^{\bm X} f\|_{L_2})$}$n^{-1.50}$}}\csname LTb\endcsname \put(5382,3160){\makebox(0,0)[l]{\strut{}\scriptsize\parbox{85pt}{$(n, \|f-A_{\mathcal A}^{\bm X} f\|_{L_2})$}$n^{-1.45}$}}\csname LTb\endcsname \put(5382,3520){\makebox(0,0)[l]{\strut{}\scriptsize\parbox{85pt}{$(n, \sqrt[4]{\mathcal S_n(\bm z)})$}$n^{-0.83}$}}\csname LTb\endcsname \put(2615,119){\makebox(0,0){\strut{}\scriptsize number of points}}\csname LTb\endcsname \put(2615,3939){\makebox(0,0){\strut{}reciprocal function $d=100$, $q=6$}}}\gplbacktext
    \put(0,0){\includegraphics[width={396.00bp},height={198.00bp}]{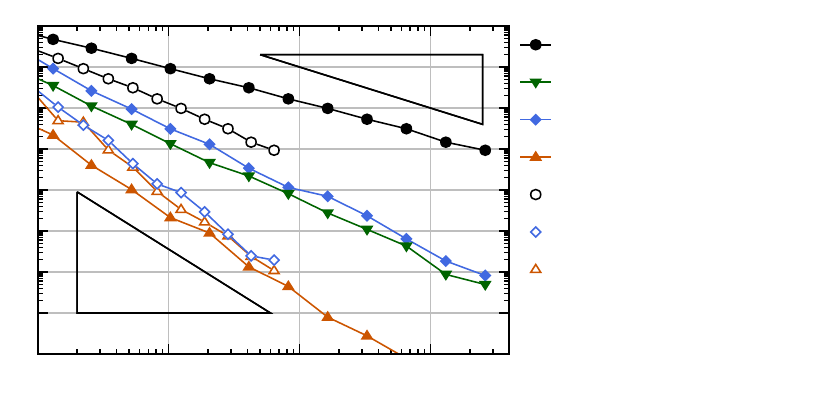}}\gplfronttext
  \end{picture}\endgroup
     }
    \caption{$L_2$-error of the approximations of the reciprocal function in $d=100$ dimensions and decay parameter $q=6$, with smoothness parameter $\alpha=1$ (top) and $\alpha=2$ (bottom), with respect to the number of sampling points for the classical lattice algorithm, the least squares method, and the kernel method using the full and subsampled lattice.}\label{fig:reciprocal6_E}
\end{figure} \begin{figure} \centering
    \scalebox{0.9}{
    \begingroup
  \makeatletter
  \providecommand\color[2][]{\GenericError{(gnuplot) \space\space\space\@spaces}{Package color not loaded in conjunction with
      terminal option `colourtext'}{See the gnuplot documentation for explanation.}{Either use 'blacktext' in gnuplot or load the package
      color.sty in LaTeX.}\renewcommand\color[2][]{}}\providecommand\includegraphics[2][]{\GenericError{(gnuplot) \space\space\space\@spaces}{Package graphicx or graphics not loaded}{See the gnuplot documentation for explanation.}{The gnuplot epslatex terminal needs graphicx.sty or graphics.sty.}\renewcommand\includegraphics[2][]{}}\providecommand\rotatebox[2]{#2}\@ifundefined{ifGPcolor}{\newif\ifGPcolor
    \GPcolortrue
  }{}\@ifundefined{ifGPblacktext}{\newif\ifGPblacktext
    \GPblacktexttrue
  }{}\let\gplgaddtomacro\g@addto@macro
\gdef\gplbacktext{}\gdef\gplfronttext{}\makeatother
  \ifGPblacktext
\def\colorrgb#1{}\def\colorgray#1{}\else
\ifGPcolor
      \def\colorrgb#1{\color[rgb]{#1}}\def\colorgray#1{\color[gray]{#1}}\expandafter\def\csname LTw\endcsname{\color{white}}\expandafter\def\csname LTb\endcsname{\color{black}}\expandafter\def\csname LTa\endcsname{\color{black}}\expandafter\def\csname LT0\endcsname{\color[rgb]{1,0,0}}\expandafter\def\csname LT1\endcsname{\color[rgb]{0,1,0}}\expandafter\def\csname LT2\endcsname{\color[rgb]{0,0,1}}\expandafter\def\csname LT3\endcsname{\color[rgb]{1,0,1}}\expandafter\def\csname LT4\endcsname{\color[rgb]{0,1,1}}\expandafter\def\csname LT5\endcsname{\color[rgb]{1,1,0}}\expandafter\def\csname LT6\endcsname{\color[rgb]{0,0,0}}\expandafter\def\csname LT7\endcsname{\color[rgb]{1,0.3,0}}\expandafter\def\csname LT8\endcsname{\color[rgb]{0.5,0.5,0.5}}\else
\def\colorrgb#1{\color{black}}\def\colorgray#1{\color[gray]{#1}}\expandafter\def\csname LTw\endcsname{\color{white}}\expandafter\def\csname LTb\endcsname{\color{black}}\expandafter\def\csname LTa\endcsname{\color{black}}\expandafter\def\csname LT0\endcsname{\color{black}}\expandafter\def\csname LT1\endcsname{\color{black}}\expandafter\def\csname LT2\endcsname{\color{black}}\expandafter\def\csname LT3\endcsname{\color{black}}\expandafter\def\csname LT4\endcsname{\color{black}}\expandafter\def\csname LT5\endcsname{\color{black}}\expandafter\def\csname LT6\endcsname{\color{black}}\expandafter\def\csname LT7\endcsname{\color{black}}\expandafter\def\csname LT8\endcsname{\color{black}}\fi
  \fi
    \setlength{\unitlength}{0.0500bp}\ifx\gptboxheight\undefined \newlength{\gptboxheight}\newlength{\gptboxwidth}\newsavebox{\gptboxtext}\fi \setlength{\fboxrule}{0.5pt}\setlength{\fboxsep}{1pt}\definecolor{tbcol}{rgb}{1,1,1}\begin{picture}(7920.00,3960.00)\gplgaddtomacro\gplbacktext{\colorrgb{0.00,0.00,0.00}\put(251,551){\makebox(0,0)[r]{\strut{}\scriptsize $10^{-4}$}}\colorrgb{0.00,0.00,0.00}\put(251,1338){\makebox(0,0)[r]{\strut{}\scriptsize $10^{-3}$}}\colorrgb{0.00,0.00,0.00}\put(251,2125){\makebox(0,0)[r]{\strut{}\scriptsize $10^{-2}$}}\colorrgb{0.00,0.00,0.00}\put(251,2913){\makebox(0,0)[r]{\strut{}\scriptsize $10^{-1}$}}\colorrgb{0.00,0.00,0.00}\put(251,3700){\makebox(0,0)[r]{\strut{}\scriptsize $10^{0}$}}\colorrgb{0.00,0.00,0.00}\put(352,383){\makebox(0,0){\strut{}\scriptsize $10^{2}$}}\colorrgb{0.00,0.00,0.00}\put(1940,383){\makebox(0,0){\strut{}\scriptsize $10^{4}$}}\colorrgb{0.00,0.00,0.00}\put(3529,383){\makebox(0,0){\strut{}\scriptsize $10^{6}$}}\csname LTb\endcsname \put(3546,3578){\makebox(0,0)[l]{\strut{}\scriptsize 1}}\csname LTb\endcsname \put(4356,3102){\makebox(0,0)[l]{\strut{}\scriptsize $\tfrac 12$}}\csname LTb\endcsname \put(1249,837){\makebox(0,0){\strut{}\scriptsize 1}}\csname LTb\endcsname \put(558,1595){\makebox(0,0)[r]{\strut{}\scriptsize 1}}\csname LTb\endcsname \put(7188,1063){\makebox(0,0)[l]{\strut{}\scriptsize $\alpha = 2$}}}\gplgaddtomacro\gplfronttext{\csname LTb\endcsname \put(5382,1363){\makebox(0,0)[l]{\strut{}\scriptsize\parbox{85pt}{$(|J|, \|f-A_{\text{ker}}^{\bm X_J} f\|_{L_2})$}$\Big(\frac{|J|}{\log|J|}\Big)^{-1.02}$}}\csname LTb\endcsname \put(5382,1722){\makebox(0,0)[l]{\strut{}\scriptsize\parbox{85pt}{$(|J|, \|f-S_{\mathcal B}^{\bm X_J} f\|_{L_2})$}$\Big(\frac{|J|}{\log|J|}\Big)^{-1.07}$}}\csname LTb\endcsname \put(5382,2082){\makebox(0,0)[l]{\strut{}\scriptsize\parbox{85pt}{$(|J|, \sqrt[4]{\mathcal S_n(\bm z)})$}$\Big(\frac{|J|}{\log|J|}\Big)^{-0.76}$}}\csname LTb\endcsname \put(5382,2441){\makebox(0,0)[l]{\strut{}\scriptsize\parbox{85pt}{$(n, \|f-A_{\text{ker}}^{\bm X} f\|_{L_2})$}$n^{-0.89}$}}\csname LTb\endcsname \put(5382,2801){\makebox(0,0)[l]{\strut{}\scriptsize\parbox{85pt}{$(n, \|f-S_{\mathcal B}^{\bm X} f\|_{L_2})$}$n^{-0.55}$}}\csname LTb\endcsname \put(5382,3160){\makebox(0,0)[l]{\strut{}\scriptsize\parbox{85pt}{$(n, \|f-A_{\mathcal A}^{\bm X} f\|_{L_2})$}$n^{-0.51}$}}\csname LTb\endcsname \put(5382,3520){\makebox(0,0)[l]{\strut{}\scriptsize\parbox{85pt}{$(n, \sqrt[4]{\mathcal S_n(\bm z)})$}$n^{-0.39}$}}\csname LTb\endcsname \put(2615,119){\makebox(0,0){\strut{}\scriptsize number of points}}\csname LTb\endcsname \put(2615,3939){\makebox(0,0){\strut{}reciprocal function $d=100$, $q=2.5$}}}\gplbacktext
    \put(0,0){\includegraphics[width={396.00bp},height={198.00bp}]{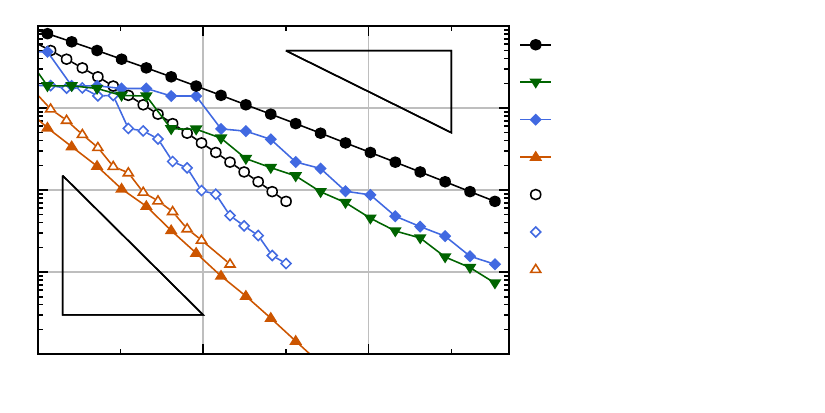}}\gplfronttext
  \end{picture}\endgroup
     }
    \caption{$L_2$-error of the approximations of the reciprocal function in $d=100$ dimensions and with decay parameter $q=2.5$ and smoothness parameter $\alpha=2$ with respect to the number of sampling points for the classical lattice algorithm, the least squares method, and the kernel method using the full and subsampled lattice.}\label{fig:reciprocal2.5_alpha2_E}
\end{figure} Since we deal with an analytic function, these results are not representative for the worst-case error and therefore the doubling of the rate \cite{SK25} becomes more apparent.
The high dimension $d=100$ diminishes the rates we expected with respect to $\alpha$.
For $\alpha=1$ (top) the kernel method with both the full as well as for the subsampled lattice perform a lot better than the corresponding least squares approximation, with empirical rates of $1.41$ and $1.89$ versus $0.84$ and $1.65$, respectively.
For $\alpha=2$ (bottom), kernel method and least squares approximation with subsampled lattice perform equally well, with empirical rates of $2.79$ and $2.8$, respectively.

In \Cref{fig:reciprocal6_E} we plot the corresponding results with the relatively slow decay parameter $q=2.5$ and $\alpha=2$.
This problem is considerably harder.
The empirical rates drop significantly to be just above 1 for both the kernel method and least squares approximation with the subsampled lattice.

The frequencies used in the $q=6$ example in \Cref{fig:reciprocal6_E} by the least squares approximation were nonzero only in the first $11$ and $23$ components for $\alpha=1$ and $\alpha=2$, respectively, i.e., the least squares method treated the remaining dimensions as irrelevant, while still achieving errors of $10^{-6}$ and $10^{-7}$.
For the $q=2.5$ case in \Cref{fig:reciprocal2.5_alpha2_E} frequencies with nonzero components were encountered only in the first $52$ dimensions.
This indicates that the ``effective dimension'' is low for the $q=6$ example compared to the $q=2.5$ example.
In this case all the methods for the $q=2.5$ example behave a lot more like $(n, \mathcal S_n(\bm z))$ for the full lattice and $(|J|, \mathcal S_n(\bm z))$ for the subsampled lattice, in comparison to the $q=6$ example.
It is also an example in which the subsampled least squares error shows a slightly faster empirical rate of $1.07$ than the subsampled kernel method empirical rate $1.02$.

\subsection{Computational complexity} 

Since we are interested in the computational complexity, we measure the computation time and counted the number of iterations for the presented methods for approximating the reciprocal function with $q=2.5$ and $\alpha=2$.
Both the \texttt{LSQR} method for the least squares approximation and the \texttt{CG} method for the kernel method were equipped with a stopping criterion of the relative error being smaller than $10^{-8}$.
The iteration count of the subsampled methods is depicted in \Cref{fig:reciprocal2.5_alpha2_I}.
\begin{figure} \centering
    \begingroup
  \makeatletter
  \providecommand\color[2][]{\GenericError{(gnuplot) \space\space\space\@spaces}{Package color not loaded in conjunction with
      terminal option `colourtext'}{See the gnuplot documentation for explanation.}{Either use 'blacktext' in gnuplot or load the package
      color.sty in LaTeX.}\renewcommand\color[2][]{}}\providecommand\includegraphics[2][]{\GenericError{(gnuplot) \space\space\space\@spaces}{Package graphicx or graphics not loaded}{See the gnuplot documentation for explanation.}{The gnuplot epslatex terminal needs graphicx.sty or graphics.sty.}\renewcommand\includegraphics[2][]{}}\providecommand\rotatebox[2]{#2}\@ifundefined{ifGPcolor}{\newif\ifGPcolor
    \GPcolortrue
  }{}\@ifundefined{ifGPblacktext}{\newif\ifGPblacktext
    \GPblacktexttrue
  }{}\let\gplgaddtomacro\g@addto@macro
\gdef\gplbacktext{}\gdef\gplfronttext{}\makeatother
  \ifGPblacktext
\def\colorrgb#1{}\def\colorgray#1{}\else
\ifGPcolor
      \def\colorrgb#1{\color[rgb]{#1}}\def\colorgray#1{\color[gray]{#1}}\expandafter\def\csname LTw\endcsname{\color{white}}\expandafter\def\csname LTb\endcsname{\color{black}}\expandafter\def\csname LTa\endcsname{\color{black}}\expandafter\def\csname LT0\endcsname{\color[rgb]{1,0,0}}\expandafter\def\csname LT1\endcsname{\color[rgb]{0,1,0}}\expandafter\def\csname LT2\endcsname{\color[rgb]{0,0,1}}\expandafter\def\csname LT3\endcsname{\color[rgb]{1,0,1}}\expandafter\def\csname LT4\endcsname{\color[rgb]{0,1,1}}\expandafter\def\csname LT5\endcsname{\color[rgb]{1,1,0}}\expandafter\def\csname LT6\endcsname{\color[rgb]{0,0,0}}\expandafter\def\csname LT7\endcsname{\color[rgb]{1,0.3,0}}\expandafter\def\csname LT8\endcsname{\color[rgb]{0.5,0.5,0.5}}\else
\def\colorrgb#1{\color{black}}\def\colorgray#1{\color[gray]{#1}}\expandafter\def\csname LTw\endcsname{\color{white}}\expandafter\def\csname LTb\endcsname{\color{black}}\expandafter\def\csname LTa\endcsname{\color{black}}\expandafter\def\csname LT0\endcsname{\color{black}}\expandafter\def\csname LT1\endcsname{\color{black}}\expandafter\def\csname LT2\endcsname{\color{black}}\expandafter\def\csname LT3\endcsname{\color{black}}\expandafter\def\csname LT4\endcsname{\color{black}}\expandafter\def\csname LT5\endcsname{\color{black}}\expandafter\def\csname LT6\endcsname{\color{black}}\expandafter\def\csname LT7\endcsname{\color{black}}\expandafter\def\csname LT8\endcsname{\color{black}}\fi
  \fi
    \setlength{\unitlength}{0.0500bp}\ifx\gptboxheight\undefined \newlength{\gptboxheight}\newlength{\gptboxwidth}\newsavebox{\gptboxtext}\fi \setlength{\fboxrule}{0.5pt}\setlength{\fboxsep}{1pt}\definecolor{tbcol}{rgb}{1,1,1}\begin{picture}(7920.00,3100.00)\gplgaddtomacro\gplbacktext{\colorrgb{0.00,0.00,0.00}\put(2596,551){\makebox(0,0)[r]{\strut{}\scriptsize $10^{0}$}}\colorrgb{0.00,0.00,0.00}\put(2596,1386){\makebox(0,0)[r]{\strut{}\scriptsize $10^{1}$}}\colorrgb{0.00,0.00,0.00}\put(2596,2221){\makebox(0,0)[r]{\strut{}\scriptsize $10^{2}$}}\colorrgb{0.00,0.00,0.00}\put(2596,3055){\makebox(0,0)[r]{\strut{}\scriptsize $10^{3}$}}\colorrgb{0.00,0.00,0.00}\put(2697,311){\makebox(0,0){\strut{}\scriptsize $10^{2}$}}\colorrgb{0.00,0.00,0.00}\put(3521,311){\makebox(0,0){\strut{}\scriptsize $10^{3}$}}\colorrgb{0.00,0.00,0.00}\put(4344,311){\makebox(0,0){\strut{}\scriptsize $10^{4}$}}\colorrgb{0.00,0.00,0.00}\put(5168,311){\makebox(0,0){\strut{}\scriptsize $10^{5}$}}}\gplgaddtomacro\gplfronttext{\csname LTb\endcsname \put(5684,2876){\makebox(0,0)[l]{\strut{}$S_{\mathcal B}^{\bm X_J}f$}}\csname LTb\endcsname \put(5684,2516){\makebox(0,0)[l]{\strut{}$A_{\text{ker}}^{\bm X_J}f$}}\csname LTb\endcsname \put(2149,1803){\rotatebox{-270.00}{\makebox(0,0){\strut{}\scriptsize number of iterations}}}\csname LTb\endcsname \put(3949,119){\makebox(0,0){\strut{}\scriptsize $|J|$}}}\gplbacktext
    \put(0,0){\includegraphics[width={396.00bp},height={155.00bp}]{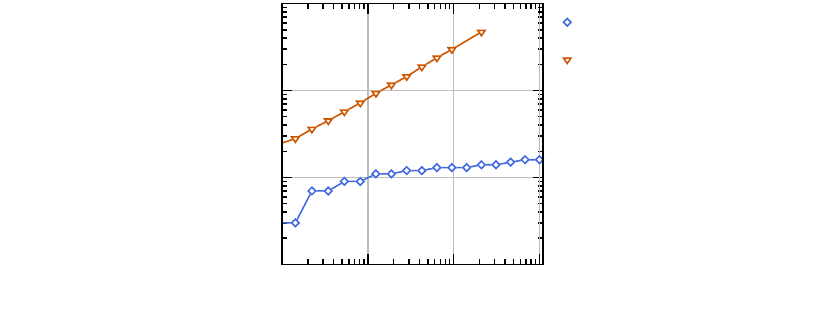}}\gplfronttext
  \end{picture}\endgroup
     \caption{Number of iterations with respect to the number of points for subsampled least squares approximation and kernel method.}\label{fig:reciprocal2.5_alpha2_I}
\end{figure} While the least squares approximation never exceeded $16$ iterations, the kernel method used up to $471$ iterations, growing like $|J|^{0.57}$.
This affirms the predicted cost with respect to the number of iterations in \Cref{tab:cost}.

For the actual cost, including precomputation and the matrix-vector products, we measure the elapsed time.
For the subsampled lattice we used the naive matrix-vector implementation as well as the FFT implementation discussed in \Cref{sec:implementation}.
Further, we modeled two scenarios: function evaluations to have no cost to evaluate and second, needing $0.1$ seconds to evaluate.
The latter resembles practical examples with function evaluations corresponding to PDE solutions or similar.
The results are depicted in \Cref{fig:reciprocal2.5_alpha2_T}.

\begin{figure} \centering
    \scalebox{0.9}{
    \begingroup
  \makeatletter
  \providecommand\color[2][]{\GenericError{(gnuplot) \space\space\space\@spaces}{Package color not loaded in conjunction with
      terminal option `colourtext'}{See the gnuplot documentation for explanation.}{Either use 'blacktext' in gnuplot or load the package
      color.sty in LaTeX.}\renewcommand\color[2][]{}}\providecommand\includegraphics[2][]{\GenericError{(gnuplot) \space\space\space\@spaces}{Package graphicx or graphics not loaded}{See the gnuplot documentation for explanation.}{The gnuplot epslatex terminal needs graphicx.sty or graphics.sty.}\renewcommand\includegraphics[2][]{}}\providecommand\rotatebox[2]{#2}\@ifundefined{ifGPcolor}{\newif\ifGPcolor
    \GPcolortrue
  }{}\@ifundefined{ifGPblacktext}{\newif\ifGPblacktext
    \GPblacktexttrue
  }{}\let\gplgaddtomacro\g@addto@macro
\gdef\gplbacktext{}\gdef\gplfronttext{}\makeatother
  \ifGPblacktext
\def\colorrgb#1{}\def\colorgray#1{}\else
\ifGPcolor
      \def\colorrgb#1{\color[rgb]{#1}}\def\colorgray#1{\color[gray]{#1}}\expandafter\def\csname LTw\endcsname{\color{white}}\expandafter\def\csname LTb\endcsname{\color{black}}\expandafter\def\csname LTa\endcsname{\color{black}}\expandafter\def\csname LT0\endcsname{\color[rgb]{1,0,0}}\expandafter\def\csname LT1\endcsname{\color[rgb]{0,1,0}}\expandafter\def\csname LT2\endcsname{\color[rgb]{0,0,1}}\expandafter\def\csname LT3\endcsname{\color[rgb]{1,0,1}}\expandafter\def\csname LT4\endcsname{\color[rgb]{0,1,1}}\expandafter\def\csname LT5\endcsname{\color[rgb]{1,1,0}}\expandafter\def\csname LT6\endcsname{\color[rgb]{0,0,0}}\expandafter\def\csname LT7\endcsname{\color[rgb]{1,0.3,0}}\expandafter\def\csname LT8\endcsname{\color[rgb]{0.5,0.5,0.5}}\else
\def\colorrgb#1{\color{black}}\def\colorgray#1{\color[gray]{#1}}\expandafter\def\csname LTw\endcsname{\color{white}}\expandafter\def\csname LTb\endcsname{\color{black}}\expandafter\def\csname LTa\endcsname{\color{black}}\expandafter\def\csname LT0\endcsname{\color{black}}\expandafter\def\csname LT1\endcsname{\color{black}}\expandafter\def\csname LT2\endcsname{\color{black}}\expandafter\def\csname LT3\endcsname{\color{black}}\expandafter\def\csname LT4\endcsname{\color{black}}\expandafter\def\csname LT5\endcsname{\color{black}}\expandafter\def\csname LT6\endcsname{\color{black}}\expandafter\def\csname LT7\endcsname{\color{black}}\expandafter\def\csname LT8\endcsname{\color{black}}\fi
  \fi
    \setlength{\unitlength}{0.0500bp}\ifx\gptboxheight\undefined \newlength{\gptboxheight}\newlength{\gptboxwidth}\newsavebox{\gptboxtext}\fi \setlength{\fboxrule}{0.5pt}\setlength{\fboxsep}{1pt}\definecolor{tbcol}{rgb}{1,1,1}\begin{picture}(7920.00,3100.00)\gplgaddtomacro\gplbacktext{\colorrgb{0.00,0.00,0.00}\put(692,616){\makebox(0,0)[r]{\strut{}\scriptsize $10^{-4}$}}\colorrgb{0.00,0.00,0.00}\put(692,1224){\makebox(0,0)[r]{\strut{}\scriptsize $10^{-3}$}}\colorrgb{0.00,0.00,0.00}\put(692,1832){\makebox(0,0)[r]{\strut{}\scriptsize $10^{-2}$}}\colorrgb{0.00,0.00,0.00}\put(692,2440){\makebox(0,0)[r]{\strut{}\scriptsize $10^{-1}$}}\colorrgb{0.00,0.00,0.00}\put(692,3049){\makebox(0,0)[r]{\strut{}\scriptsize $10^{0}$}}\colorrgb{0.00,0.00,0.00}\put(790,480){\makebox(0,0){\strut{}\scriptsize $10^{-2}$}}\colorrgb{0.00,0.00,0.00}\put(1801,480){\makebox(0,0){\strut{}\scriptsize $10^{0}$}}\colorrgb{0.00,0.00,0.00}\put(2812,480){\makebox(0,0){\strut{}\scriptsize $10^{2}$}}}\gplgaddtomacro\gplfronttext{\csname LTb\endcsname \put(179,1832){\rotatebox{-270.00}{\makebox(0,0){\strut{}\scriptsize $L_2$-error}}}\csname LTb\endcsname \put(2053,267){\makebox(0,0){\strut{}\scriptsize time in seconds}}}\gplgaddtomacro\gplbacktext{\colorrgb{0.00,0.00,0.00}\put(3852,616){\makebox(0,0)[r]{\strut{}\scriptsize $10^{-4}$}}\colorrgb{0.00,0.00,0.00}\put(3852,1224){\makebox(0,0)[r]{\strut{}\scriptsize $10^{-3}$}}\colorrgb{0.00,0.00,0.00}\put(3852,1832){\makebox(0,0)[r]{\strut{}\scriptsize $10^{-2}$}}\colorrgb{0.00,0.00,0.00}\put(3852,2440){\makebox(0,0)[r]{\strut{}\scriptsize $10^{-1}$}}\colorrgb{0.00,0.00,0.00}\put(3852,3049){\makebox(0,0)[r]{\strut{}\scriptsize $10^{0}$}}\colorrgb{0.00,0.00,0.00}\put(3950,480){\makebox(0,0){\strut{}\scriptsize $10^{0}$}}\colorrgb{0.00,0.00,0.00}\put(4961,480){\makebox(0,0){\strut{}\scriptsize $10^{2}$}}\colorrgb{0.00,0.00,0.00}\put(5972,480){\makebox(0,0){\strut{}\scriptsize $10^{4}$}}}\gplgaddtomacro\gplfronttext{\csname LTb\endcsname \put(7018,2904){\makebox(0,0)[l]{\strut{}\scriptsize$S_{\mathcal B}^{\bm X}f$}}\csname LTb\endcsname \put(7018,2613){\makebox(0,0)[l]{\strut{}\scriptsize$A_{\text{ker}}^{\bm X}f$}}\csname LTb\endcsname \put(7018,2323){\makebox(0,0)[l]{\strut{}\scriptsize$S_{\mathcal B}^{\bm X_J}f$ (LFFT)}}\csname LTb\endcsname \put(7018,2033){\makebox(0,0)[l]{\strut{}\scriptsize$S_{\mathcal B}^{\bm X_J}f$ (naive)}}\csname LTb\endcsname \put(7018,1743){\makebox(0,0)[l]{\strut{}\scriptsize$A_{\text{ker}}^{\bm X_J}f$ (FFT)}}\csname LTb\endcsname \put(7018,1453){\makebox(0,0)[l]{\strut{}\scriptsize$A_{\text{ker}}^{\bm X_J}f$ (naive)}}\csname LTb\endcsname \put(5213,267){\makebox(0,0){\strut{}\scriptsize time in seconds}}}\gplbacktext
    \put(0,0){\includegraphics[width={396.00bp},height={155.00bp}]{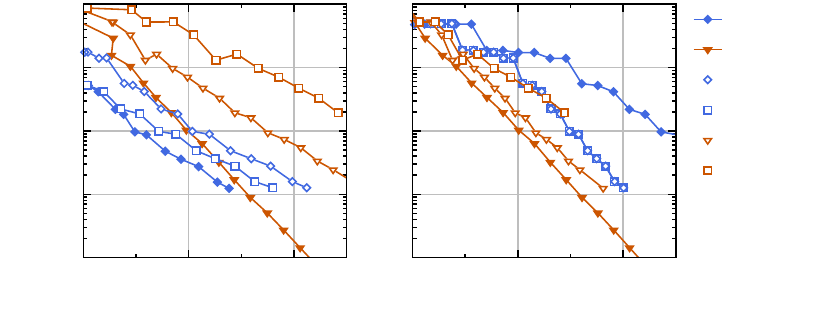}}\gplfronttext
  \end{picture}\endgroup
     }
    \caption{$L_2$-error of the numerical experiment with respect to the computation time.
    Left: function evaluation costs are not accounted for. Right: cost of single function evaluation is $0.1$ seconds.
    }\label{fig:reciprocal2.5_alpha2_T}
\end{figure} 

Without considering the cost for function evaluations the kernel method with the full lattice is the fastest when the number of points is large enough.
This is again due to making use of the higher smoothness of the function for a faster rate of decay.
The cost for all subsampled methods differ only in multiplicative constants for the error regardless of using the \texttt{FFT} or naive matrix-vector products.
The high overhead of the kernel method is due to the cost of evaluating the kernel and far more iterations needed for iterative solver, since the kernel matrix is not as well-conditioned as the least squares matrix.
In terms of memory, we stored the subsampled matrices for the naive implementation, which required us to stop at an earlier number of points as we werer limited to use $512$ Gigabytes of memory.
This could be overcome by computing the matrix on the fly but would add a larger computational overload.

With the addition of a small cost of $0.1$\,seconds for function evaluations, the graph resembles more the sampling complexity cost in \Cref{fig:reciprocal2.5_alpha2_E}.
Thus, in the right-hand setting when function evaluations are costly, the overhead of using an iterative solver with the the subsampling techniques of this paper is justified.
If the function in question is smooth, the kernel method excels due to the doubling of the rate effect \cite{SK25}.
 \section{Concluding remarks}\label{sec:conclusion} 

In this paper we used random subsampling to regain the optimal polynomial order of convergence for lattices while still maintaining the structure to apply fast Fourier algorithms.
The focus is on having a small initial lattice size, which determines the computational complexity of the approximation algorithms. We used the least squares approximation and the kernel method.
For Korobov spaces we achieve the optimal polynomial order of convergence 
$|J|^{-\alpha+\varepsilon}$ for $0<\varepsilon<\alpha$, with the initial lattice size $n\lesssim |J|^{2/\sqrt{1-\varepsilon/\alpha}}$, which is arbitrarily close to quadratic in terms of the actual  subsampled point set size $|J|$.
This improves on earlier results in \cite{BKPU22}, where there is a polynomial gap for the initial lattice size.

Similar to \cite{BKPU22} one could use further subsampling techniques from \cite{BSU23} or even the non-constructive techniques from \cite{DKU23} in order to improve on the logarithmic rate in the error bound.
Further, in the numerics the constants for the oversampling were ignored while still achieving the theoretical error rates.
This indicates possible improvement for the constants.
 
\vspace{10pt}

\noindent\textbf{Acknowledgement.}
T authors thank Dirk~Nuyens and Daniel~Potts for insightful discussions and valuable suggestions during the preparation of this work. We also thank both anonymous referees for their careful analysis and support. 
Further, w{}e acknowledge the financial support from the Australian Research Council Discovery Project DP240100769.
This research includes computations using the computational cluster Katana supported by Research Technology Services at UNSW Sydney.

\printbibliography

\end{document}